\newcommand{\ydG}{{}^{\ku G}_{\ku G}\mathcal{YD}}
\newcommand{\mor}{\sim_{\text{Mor}}}
\newcommand{\vect}{\operatorname{vec}}
\newcommand\sn{\mathbb S_n}
\newcommand{\fd}{finite-dimensional}
\newcommand{\gap}{\textsf{GAP}}
\newcommand{\anupq}{\textsf{HAP}}
\newcommand\ad{\operatorname{ad}}
\newcommand\toba{\mathfrak B }
\newcommand{\gr}{\operatorname{gr}}
\newcommand{\trid}{\triangleright}
\newcommand{\Fc}{{\mathcal F}}
\newcommand{\ku}{\Bbbk}
\newcommand{\N}{{\mathbb N}}
\newcommand{\G}{{\mathbb G}}
\newcommand{\M}{{\mathcal M}}
\newcommand{\Q}{{\mathsf Q}}
\newcommand{\F}{{\mathbb F}}
\newcommand{\C}{{\mathcal C}}
\newcommand{\D}{{\mathcal D}}
\newcommand{\q}{{\mathbf q}}
\newcommand{\T}{{\mathcal T}}
\newcommand{\Ac}{{\mathcal A}}
\newcommand{\Oc}{{\mathcal O}}
\newcommand{\ydhh}{{}^{H}_{H}\mathcal{YD}}
\newcommand{\ydg}{{}^{\ku G}_{\ku G}\mathcal{YD}}
\newcommand{\ydk}{{}^{K}_{K}\mathcal{YD}}
\newcommand{\Ss}{\mathcal S}
\newcommand{\End}{\operatorname{End}}
\newcommand{\Aut}{\operatorname{Aut}}
\newcommand{\Out}{\operatorname{Out}}
\newcommand\Rep{\operatorname{Rep}}
\newcommand\Corep{\operatorname{Corep}}
\newcommand\opext{\operatorname{Opext}}
\numberwithin{equation}{section}
\theoremstyle{plain}
\newtheorem{theorem}{Theorem}[section]
\newtheorem{lema}[theorem]{Lemma}
\newtheorem{prop}[theorem]{Proposition}
\newtheorem{question}[theorem]{Question}
\theoremstyle{definition}
\newtheorem{definition}[theorem]{Definition}
\theoremstyle{remark}
\newtheorem{obs}[theorem]{Remark}
\newcommand\id{\operatorname{id}}
\newcommand\st{\mathbb S_3}
\newcommand\sk{\mathbb S_4}
\newcommand\sco{\mathbb S_5}
\newcommand\ac{\mathbb A_4}
\newcommand\s{\mathbb S}
\def\pf{\begin{proof}}
\def\epf{\end{proof}}
\theoremstyle{remark}
\newcounter{commentcounter}
\newcommand{\gtha}[6]{H^{#1}_{#5\, #6}(#2, #3, #4)}
\newcommand{\gth}[5]{H^{#1}_{#4 \, #5}(#2, #3)}
\begin{document}

\renewcommand{\baselinestretch}{1.2}

\thispagestyle{empty}
%\vspace*{2in}
\title[Hopf algebras with the dual Chevalley property]
{Examples of finite-dimensional Hopf algebras with the dual Chevalley property}

\author[N. Andruskiewitsch, C. Galindo and M. M\"uller]
{Nicol\'as Andruskiewitsch, C\'esar Galindo and Monique M\"uller}

\thanks{N. A. and M. M.  were partially supported by
ANPCyT-Foncyt, CONICET and  Secyt (UNC).  C. G. was partially supported by the FAPA funds from
vicerrectoria de investigaciones de la Universidad de los Andes. Part of this work was done during visits of N. A. to the Universidad de los Andes}

\address{\noindent Facultad de Matem\'atica, Astronom\'{\i}a y F\'{\i}sica,
Universidad Nacional de C\'ordoba. CIEM -- CONICET. %\newline \noindent
Medina Allende s/n (5000) Ciudad Universitaria, C\'ordoba,
Argentina}
\email{(andrus|mmuller)@famaf.unc.edu.ar}
\address{\noindent  Department of mathematics,
Universidad de los Andes.
Carrera 1 No. 18A -12, Bogot\'a, Colombia.
}
\email{cn.galindo1116@uniandes.edu.co}

\subjclass[2010]{16T05}
%\date{\today}

\begin{abstract}
We present new Hopf algebras with the dual Chevalley property by determining all semisimple Hopf algebras  Morita-equivalent to a group algebra over a finite group, for a list of groups supporting a non-trivial \fd{} Nichols algebra.
\end{abstract}

\maketitle

\setcounter{tocdepth}{1}

%\tableofcontents

\section{Introduction}\label{sec:sln}

\subsection{}\label{subsec:intro1}

A Hopf algebra has the dual Chevalley property if the tensor
product of two simple comodules is semisimple, or equivalently if
its coradical is a (cosemisimple) Hopf subalgebra. These Hopf 
algebras are interesting by various reasons,
among them  the Lifting Method for their classification
 \cite{AS-cambr}. Particular classes are the
pointed (the coradical is a group algebra) and copointed (the
coradical is the algebra of functions on a finite or reductive
group) Hopf algebras. However few examples out of these classes
has been discussed in the literature \cite{de1tipo6chevalley, Mom}. The purpose of this paper is
to present explicit examples of Hopf algebras with the
dual Chevalley property.

\subsection{}\label{subsec:intro2} In this paper
the underlying field  $\ku$ is algebraically closed of characteristic 0. 
Then the coradical of a \fd{} Hopf
algebra with the dual Chevalley property is a semisimple Hopf
algebra. Let us consider the problem of constructing (and
eventually classifying) \fd{} Hopf algebras $A$ whose coradical
$A_0$ is isomorphic to a fixed semisimple Hopf algebra $H$. For
this, we need to address three problems:

\begin{enumerate}\renewcommand{\theenumi}{\alph{enumi}}
\renewcommand{\labelenumi}{(\theenumi)}
\item\label{item:lefting-i} To find (classify) the Yetter-Drinfeld
modules $V \in \ydhh$ such that the dimension of the Nichols algebra
$\toba(V)$ is finite. Then $\Ac(V) = \toba(V)\# H$ is a \fd{} Hopf
algebra with $\Ac(V)_0 \simeq H$.

 \item\label{item:lefting-ii} To find (classify) the
deformations, or liftings, of $\Ac(V)$; that is, Hopf algebras $A$
such that $\gr A$ (the graded Hopf algebra with respect the
coradical filtration) is isomorphic to $\Ac(V)$.

\item\label{item:lefting-iii}
If $A$ is a \fd{} Hopf algebra with $A_0\simeq H$, then prove that there exists $V$ such that
$\gr A\simeq \Ac(V)$.
\end{enumerate}

\subsection{}\label{subsec:intro3} 
Two fusion categories $\C$ and $\D$ are \emph{Morita-equivalent}
if there exists an indecomposable $\C$-module category $\M$
such that $\D$ is tensor equivalent to $\End_{\C} (\M)$ \cite{Mu1};
equivalently, if their centers are equivalent as braided tensor categories \cite[Theorem 3.1]{ENO2}. In this case we write $\C \mor \D$.

\renewcommand{\thefootnote}{\alph{footnote}}

Two semisimple Hopf algebras $K$ and $H$ are \emph{Morita-equivalent} 
(denoted by $K \mor H$)
 iff  $\Rep K \mor \Rep H$ \footnote{This is not the same as being Morita-equivalent as algebras.}, iff $\ydk$ and $\ydhh$  are equivalent as braided tensor categories.
When this is the case, the braided equivalence $\Fc:\ydk \to \ydhh$ preserves Nichols algebras, i.e. $\Fc(\toba (V))
\simeq \toba (\Fc(V))$. 
In consequence, if Problem \eqref{item:lefting-i} above is solved for $K$, then so is for $H$.
Also, Problem \eqref{item:lefting-iii} is equivalent to: if $\toba \twoheadrightarrow \toba(V)$ is a \fd{} pre-Nichols algebra in $\ydk$, then necessarily $\toba \simeq \toba(V)$. Therefore, if Problem \eqref{item:lefting-iii} above is solved for $K$, then so is for $H$.

In this paper, we construct Hopf algebras with the Chevalley property over a semisimple Hopf algebra $H$ that is Morita-equivalent to a group algebra $K =\ku G$, $G$ a finite group, provided we know examples, or even better the classification, of $V \in \ydk$ with $\dim \toba(V) < \infty$.

\subsection{}\label{subsec:intro4}

Let $G$ be a finite group. The characterization of all semisimple Hopf algebras Morita-equivalent to   $\ku G$ follows from \cite{O} as we briefly recall now. 

\smallbreak \noindent\emph{$\circ$ Duals.}
If $H$ is a semisimple Hopf algebra, then 
$H^* \mor H$-- just take $\M = \vect{_\ku}$.
In particular $\vect_{G} \mor \Rep \ku G$, where $\vect_{G}$ is the category  of \fd{} $G$-graded vector spaces.

\smallbreak \noindent\emph{$\circ$ Group-theoretical Hopf algebras.} Let  $F,  \Gamma< G$ be such that $G =F \Gamma$--but $F \cap \Gamma$ need not be trivial. Given a suitable pair $(\alpha, \beta) \in H^2(F, \ku^{\times}) \times H^2(\Gamma, \ku^{\times})$, cf. Definition \ref{def:gp-th}, there is a corresponding Hopf algebra $\gth{G}{F}{\Gamma}{\alpha}{\beta}$ such that $\gth{G}{F}{\Gamma}{\alpha}{\beta} \mor  \ku G$. The collection $(F,\alpha, \Gamma, \beta)$
is called a  \emph{group-theoretical datum} for $G$.
 These are all Hopf algebras arising from fiber functors of all  fusion categories Morita-equivalent to $\vect_{G}$ \cite{O}, therefore all $H\mor \ku G$ are like this. See \S \ref{subsec:Group-theoretical} for more details.
 Notice however that to decide when two of these Hopf algebras are isomorphic is not evident.

We describe next some particular instances of this notion. 

\smallbreak \noindent\emph{$\circ$ Twistings.} 
Two finite-dimensional Hopf algebras $H$ and $H'$ are
twist-equivalent if and only if  $\Rep H$ and $\Rep
H'$ are tensor equivalent \cite{EG, S}.
Therefore, if  $J \in \ku G \otimes \ku G$ is a twist, then $(\ku G)^J\mor \ku G$.
Furthermore, if $U$ is a pointed Hopf algebra with $G(U) \simeq G$, then $J \in U \otimes U$ is a twist and the Hopf algebra $U^J$, with coradical $(\ku G)^J$, has the dual Chevalley property. Hence, if Problem \eqref{item:lefting-ii} is solved for $\ku G$, then so is for $(\ku G)^J$.

\smallbreak \noindent\emph{$\circ$ Abelian extensions \cite{K,Ta}.} Assume that  $G = F \Gamma$ is an exact factorization (i.e., $F, \Gamma< G$ with $G =F \Gamma$,  $F \cap \Gamma = 1$) and that $H$ fits into an exact sequence 
\begin{align}\label{eq:exact-seq}
\xymatrix{1\ar[r] & \ku^{\Gamma} \ar^\iota@{^{(}->}[r] & H
	\ar^\pi@{->>}[r] & \ku F \ar[r] & 1},
\end{align}
 associated to this factorization. 
Let $(\sigma, \tau) \in \opext (\ku^{\Gamma}, \ku F)$ be the corresponding pair of  2-cocycles. Thus $H = \ku^{\Gamma} {\,}^{\tau}\hspace{-0.08cm}\#_{\sigma} \ku F$ is a bicrossed product.
Let $\omega\in Z^3(G, \ku^{\times})$ be the 3-cocycle arising from $(\sigma, \tau)$ in the Kac exact sequence; we say that $H$ has 3-cocycle $\omega$.
  Albeit $\Rep \ku G$ and $\Rep H$ are not necessarily tensor-equivalent,  $D(H)$ is isomorphic to a twist of $D^{\omega}(\ku G)$ \cite[Theorem 1.3]{Natale}.  Hence, if $\omega = 1$, then  $H\mor \ku G$ by \cite[Theorem 3.1]{ENO2}.

\subsection{}\label{subsec:intro5}
As is well-known, the Nichols algebra $\toba(V)$ depends essentially
only on the underlying braided vector space to the Yetter-Drinfeld
module $V$. Here we shall not consider braided vector spaces of diagonal type--except for Yetter-Drinfeld modules over some dihedral groups, see Table \ref{tab:gp-th-data-dn-even} in Section \ref{sec:dn}. We focus on braided vector spaces of rack type, see e.g. \cite{AG-adv} or  \cite{AFGV-ampa}. For more examples with braided vector spaces of diagonal type see \cite{de1tipo6chevalley, Mom}.
Let  $(X, \q)$ be a pair where $X$ is a rack and $q$ a 2-cocycle, let $V$ be the associated braided vector space and assume that  $\toba(V)$ is \fd{}, cf. \cite{HLV}. We consider a group $G$ such that  $V$ is realized in $\ydG$. We then compute all group-theoretical data
 $(F,\alpha, \Gamma, \beta)$ for $G$. Consequently, $H = \gth{G}{F}{\Gamma}{\alpha}{\beta} \mor \ku G$ and there is $V'\in \ydhh$ such that $\toba(V') \simeq \toba(V)$, as algebras and coalgebras. 
We summarize our computations in Table \ref{tab:app2}.
\begin{table}[h]
	\begin{center}
		\caption{}\label{tab:app2}
		
		\begin{tabular}{|c|c|c|c|c|}
			\hline     $(X, \q)$ &   $\dim \toba(V)$ & {\small {\bf Reference }} &  $G$ & $H \mor \ku G$ 
			\\\hline $\D_3$, -1       & $12$ & \cite{MS}  & $C_3\rtimes C_6$ &  Prop. \ref{prop:c3c6}
			
			\\ \hline $(\Q_{5, 2},-1)$, $(\Q_{5, 3},-1)$
			& $1280$ & \cite{AG-adv} & $C_5\rtimes_2 C_{20}$  	& Table \ref{tab:gp-th-data-cccv}

			\\ \hline {\small $(\Oc^4_2,   -1)$, $(\Oc^4_2, \chi)$ $(\Oc^4_4, -1)$}
			& $576$ & {\small\cite{FK, MS}} & $\mathbb{S}_4$  & Table \ref{tab:gp-th-data-sk}

			\\ \hline $(\Oc^5_2,-1)$, $(\Oc^5_2, \chi)$
			& $8294400$ & {\small\cite{FK, G-zoo,GG}} & $\mathbb{S}_5$  &  
			Table \ref{tab:gp-th-data-s5}

			\\ 
	\hline $(\T,-1)$ 
			& $72$ & {\small\cite{G-cm}} &  $\mathbb{A}_4\times C_2$ &   
			Table \ref{tab:gp-th-data-a4c2}

			 \\ \hline $(\Q_{7, 3},-1)$,  $(\Q_{7, 5},-1)$
            & $326592$ & \cite{G-zoo}  &  $C_7\rtimes_3 C_6$        
            &    Prop. \ref{prop:Afin(7,5)}

			\\ \hline
		\end{tabular}
		
	\end{center}
\end{table}

We describe new examples in Theorems \ref{prop:c3c6-pointed}, \ref{prop:c5c20-pointed},  \ref{prop:S4-pointed}, \ref{prop:S5-pointed}, \ref{prop:A4xZ2-pointed}, \ref{prop:c7c6-pointed}, \ref{prop:dn-even-pointed}.

\begin{obs}
 (i)
 There are groups $G$ that admit a finite-dimensional Nichols algebra but no non-trivial $H$ with $H \mor \ku G$. For instance $(\D_3,-1)$ corresponds to some $V \in {}^{\ku \st}_{\ku \st}  \mathcal{YD}$
but $H \mor \ku \st$ implies $H \simeq \ku \st $ or $\ku^{\st}$, \S  \ref{sec:c3c6}. Also, $(\Q_{5, 2},-1)$
corresponds to some  $V \in \ydG$, where $G = \ku (C_5\rtimes_2 C_4)$;
but $H \mor \ku G$ implies $H \simeq \ku G$ or $\ku^{G}$, \S \ref{sec:c5c20}.

(ii) Let $G$ be as in Table \ref{tab:app2} or a dihedral group $D_n$.
 By Lemma \ref{prop:algebras gt equivalentes} \ref{item:cocommutative}, there is no group $G'\not\simeq  G$ with $\ku G'\mor \ku G$. 
 
 (iii). If  $J \in \ku G \otimes \ku G$ is a twist and $U$ is a pointed Hopf algebra with $G(U) \simeq G$, then  the Hopf algebra $U^J$, with coradical $(\ku G)^J$, has a rather concrete description. Otherwise,
 if $H \mor \ku G$ , then the braided equivalence $\Fc:\ydG \to \ydhh$
 is not quite explicit. In this way, neither the structure of the Yetter-Drinfeld module $\Fc(V)$ nor the defining relations of $\toba(\Fc(V))$ are not known, and the description of the liftings is problematic. 
Notice that a direct relation between liftings of $\toba(V)\# \ku G$ and liftings of  $\toba(\Fc(V))\# H$ is not expected. For instance, if $G = \s_3$ and $\toba(V)$ is the 12-dimensional Nichols algebra, then 
 $\toba(V)\# \ku \s_3$ has exactly one non-trivial lifting \cite{AG2}; whereas 
 $\toba(V)\# \ku^{\s_3}$ has infinitely many non-isomorphic liftings 
 \cite{AV}.
 \end{obs}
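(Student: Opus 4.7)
The plan is to prove each part of the remark in turn, leaning on the Ostrik-type description of Morita classes from \S\ref{subsec:intro4}: every semisimple $H$ with $H \mor \ku G$ is of the form $\gth{G}{F}{\Gamma}{\alpha}{\beta}$ for some group-theoretical datum $(F, \alpha, \Gamma, \beta)$ for $G$. So the substantive content of parts (i) and (ii) will reduce to enumerating and identifying such data for the groups at hand.

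For part (i), I would enumerate the data for $G = \st$ and for $G = C_5 \rtimes_2 C_4$, exploiting two convenient features of these groups: the lists of subgroups are short, and all of their subgroups have trivial Schur multiplier, so by Definition \ref{def:gp-th} the cocycles $\alpha, \beta$ can be taken trivial. After this reduction, only the factorizations $G = F\Gamma$ remain to be analyzed, and one checks, modulo the natural equivalences of data (simultaneous $G$-conjugation of the pair $(F,\Gamma)$ and the swap $(F, \alpha) \leftrightarrow (\Gamma, \beta)$ that interchanges $H$ with $H^{*}$), that all surviving Hopf algebras are either $\ku G$ or $\ku^G$. This is the content of \S\ref{sec:c3c6} and \S\ref{sec:c5c20}.

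For part (ii), I would invoke the cocommutative case of Lemma \ref{prop:algebras gt equivalentes}, which turns $\ku G' \mor \ku G$ into a purely group-theoretic constraint on $G'$ (isocategoricity, i.e.\ a braided equivalence $Z(\Rep G') \simeq Z(\Rep G)$). For each $G$ in Table \ref{tab:app2} and for $\dn$, I would then exhibit an invariant of $Z(\Rep G)$, such as the multiset of dimensions of simple objects together with part of the fusion data of $D(\ku G)$, forcing $G' \simeq G$. Each case is a small finite check.

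The main obstacle is not any single computation but the bookkeeping in part (i): deciding when distinct data $(F, \alpha, \Gamma, \beta)$ and $(F', \alpha', \Gamma', \beta')$ give isomorphic Hopf algebras requires the full equivalence relation on group-theoretical data. Part (iii) needs no new argument: the contrast between the unique non-trivial lifting of $\toba(V)\#\ku\st$ and the infinite family of liftings of $\toba(V)\#\ku^{\st}$ is read off directly from \cite{AG2} and \cite{AV}, while the surrounding commentary about the inexplicitness of the equivalence $\Fc$ is conceptual rather than a claim to be established.
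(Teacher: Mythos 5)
Your treatment of (i) is sound, and it takes a different (arguably more uniform) route than the paper's: the paper disposes of $\st$ by citing the classification of Hopf algebras of dimension $6$, and of $C_5\rtimes_2 C_4$ by noting that no non-trivial subgroup carries a non-degenerate $2$-cocycle, so any group-theoretical Hopf algebra is an abelian extension attached to an exact factorization involving the normal abelian Sylow $5$-subgroup, hence trivial. Your direct enumeration (trivial Schur multipliers of all subgroups force $\alpha=\beta=1$ and $F\cap\Gamma=1$; the few exact factorizations all have a normal abelian factor, so Lemma \ref{prop:algebras gt equivalentes} \ref{item:cocommutative} applies) reaches the same conclusion. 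Part (iii) indeed needs nothing beyond the citations.

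The gap is in (ii). Lemma \ref{prop:algebras gt equivalentes} \ref{item:cocommutative} does not merely translate $\ku G'\mor\ku G$ into a braided equivalence $Z(\Rep G')\simeq Z(\Rep G)$ --- that is just the definition of $\mor$, and it is not the same thing as isocategoricity, which is a tensor equivalence $\Rep G'\simeq\Rep G$. The content of the lemma is the reduction your sketch does not use: every such $G'$ is of the form $G(H)$ for a \emph{cocommutative} $H=\gth{G}{F}{\Gamma}{\alpha}{\beta}$, hence comes from a normal abelian subgroup $F\lhd G$ together with $\alpha\in H^2(F,\ku^{\times})^{\ad G}$, and $G(H)$ is then computed from the exact sequence \eqref{eq:G(gp-th)} of Lemma \ref{lema:aux-G(H)} (for $\alpha=1$ one gets $G(H)\simeq\widehat F\rtimes N_G(F)/F$). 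The paper's case analyses list exactly these data for each $G$ and verify $G(H)\simeq G$ in every cocommutative case. Your substitute --- exhibiting an unspecified invariant of $Z(\Rep G)$ (``the multiset of dimensions of simple objects together with part of the fusion data'') that separates $G$ from every other group of the same order --- is not a small finite check: $G'$ a priori ranges over all groups of order $|G|$ (with $D_n$ this is an infinite family of cases), groups with braided-equivalent doubles need not be isomorphic in general (which is precisely why (ii) has content), and you give no argument that such an invariant suffices for the groups at hand. You should replace this step by the explicit computation of $\widehat F\rtimes N_G(F)/F$ over the short list of abelian normal subgroups $F$ of each $G$ in Table \ref{tab:app2} and of $D_n$.
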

 
 We use \gap  \ (see \cite{GAP4}) for some of the computations with finite groups. For most of the computations of finite group cohomology, we use the \gap~pa\-ckage \anupq~and  the natural isomorphism $H^{n}(\underline{\ },\ku^{\times})\cong  H^{n+1}(\underline{\ },\mathbb{Z})$.

\smallbreak\subsubsection*{Acknowledgements} We thank Graham Ellis for helping us to use the \gap~pa\-ckage \anupq, Leandro Vendramin and Cristian Vay for kindly answering some questions, and Agust\'in Garc\'ia Iglesias for very useful conversations.

\subsection{Preliminaries and notations}\label{subsec:preliminaries}

As customary we refer to group algebras and their duals as trivial Hopf algebras. We denote by $1$ the identity element of a group.
If $n \in \N$, then $C_n$ is a cyclic group of order $n$. If $G$ is a group, then the notation $F<G$ means that $F$ is a subgroup of $G$, while $F \lhd  G$ means that $F< G$ is normal. If $\triangleright$ is an action of a group $G$ on a set $F$, then we denote by $F^G$ the subset of $F$ of points fixed by $\triangleright$.  The dihedral group of order $2n$ is denoted by $D_n$.
We  denote multiplicatively the cohomology groups $H^n(G,\ku^{\times})$. Occasionally, we denote by the same letter an element in $H^n(G,\ku^{\times})$ and any of its representatives. Let $a, b \in \mathbb{Z}$, the lowest common multiple of $a$ and $b$ is denoted by $[a,b]$ and the greatest common divisor of $a$ and $b$ is denoted by $(a,b)$.  We denote by $\G_n=\{z\in \ku^\times: z^n=1\}$, $n \in \N$, and by $\G_n'$ the primitive ones.

One says that $\alpha \in Z^2(G,\ku^\times)$ is non-degenerate if and only if  the twisted group algebra $\ku_\alpha G$ is simple; see e.g. \cite{Dav, Mov}. Note that this definition only depends on the cohomology class of $\alpha$.

If $A$ is an abelian group and
$T\in\Aut A$, then $A$ is a rack (called affine) with $x\trid y=(\id-T)x+Ty$, $x, y \in A$. This is  denoted by $\Q_{A,T}$; or $\Q_{q,b}$ when $A \simeq \mathbb{F}_q$, where $q$ is a prime power, and  $T \in \Aut \mathbb{F}_q$, $T(x)= b x$, $x \in \F_q$, $b \in \F_q^{\times} \simeq C_{q-1}$. Suppose that the order of $b$ divides $q-1$. Then the rack $\Q_{q,b}$ can be realized as a conjugacy class of the group $C_q\rtimes_b C_{q-1}$, where the subscript $b$ describes the action.
Another exception to the notation is $\D_n = \Q_{C_n,T}$, where $T(x) = - x$, $x \in C_n$ (the so-called dihedral rack). Also $\T = \Q_{4,b}$, $b \in \mathbb{F}_4$  irreducible, is called the tetrahedral rack.
Meanwhile, $\Oc^n_j$ is the conjugacy class of $j$-cycles in $\sn$.

\section{Semisimple Hopf algebras}\label{sec:ss-Hopfalg}
Let $G$ be a finite group.

\subsection{Twists}\label{subsec:twistings-of-groups}

Twists in $\ku G$  are classified, up to gauge equivalence, 
by conjugacy classes of pairs $(S,\alpha)$, where $S < G$ and $\alpha \in H^2(S,\ku^\times)$ is non-degenerate \cite{Mov, EG3}. Hence $S$ is solvable and $\vert S\vert$ is a square.

\subsection{Group-theoretical Hopf algebras}\label{subsec:Group-theoretical}

Let $\omega\in Z^3(G, \ku^{\times})$.
The tensor category of finite-dimensional $G$-graded vector spaces
with associativity constraint given by $\omega$ is denoted by
$\vect_G^{\omega}$.  The tensor categories $\vect_G^{\omega}$ and
$\vect_G^{\nu}$ are tensor equivalent iff $f^*(\overline{\omega}) =
\overline{\nu}$ in $H^3(G, \ku^{\times})$ for some $f \in \Aut G$.
If   $F < G$ and $\alpha\in C^2(F, \ku^{\times})$
such that $d \alpha = \omega_{\vert F\times F \times F}$, then the twisted
group algebra $\ku_{\alpha} F$  is 
associative  in $\vect_G^{\omega}$. The category $\C(G, \omega; F, \alpha)$ of $\ku_{\alpha} F$-bimodules in
$\vect_G^{\omega}$ is a fusion category and
$\C(G,\omega; F, \alpha) \mor \vect_G^{\omega}$. 
%Tensor categories of this sort are called \emph{group-theoretical} fusion categories. 
By Tannaka duality, to describe the semisimple Hopf algebras $H$ such that $\Corep H$
is tensor equivalent to $\C(G, \omega; F, \alpha)$ is tantamount
to describe the fiber functors from $\C(G, \omega; F, \alpha)$ to
$\vect_{\ku}$, that is contained in the more general
description of indecomposable module categories over $\C(G,\omega; F, \alpha)$ \cite[Corollary 3.1]{O}.  
These Hopf algebras are called group-theoretical.
By \cite{S1}, the centers of $\C(G,\omega; F, \alpha)$ and  $\vect_G^{\omega}$ are equivalent as braided tensor categories. 

We are interested in group-theoretical Hopf algebras $H\mor \ku G$, i.e. with $\omega = 1$. Concretely, we introduce the following terminology.

\begin{definition}\label{def:gp-th} A \emph{group-theoretical datum} for $G$
is a collection $(F,\alpha, \Gamma, \beta)$ where $F, \Gamma < G$, $\alpha\in H^2(F, \ku^{\times})$ and  $\beta\in H^2(\Gamma,
\ku^{\times})$, satisfy
\begin{itemize}[leftmargin=*]\renewcommand{\labelitemi}{$\circ$}
    \item  $G = F\Gamma$;
    \item  $\alpha_{\vert F\cap\Gamma}\cdot{\beta_{\vert F\cap\Gamma}}^{-1}$
    is  non-degenerate in $F\cap\Gamma$.
\end{itemize}
\end{definition}
Let $(F,\alpha, \Gamma, \beta)$ be a group-theoretical datum  for $G$ and choose representatives $\underline{\alpha}$, $\underline{\beta}$. By \cite{O}, 
there is a Hopf algebra $H = \gth{G}{F}{\Gamma}{\underline{\alpha}}{\underline{\beta}}$ with
$\Corep H$ tensor equivalent to $\C(G, 1; F, \alpha)$, and a fortiori $H\mor \ku G$; we say that $H$ is a group-theoretical Hopf algebra over $G$. Up to isomorphisms, $H$ only depends on $\alpha$ and $\beta$;
so we loosely denote $H = \gth{G}{F}{\Gamma}{\alpha}{\beta}$.
Conversely, every Hopf algebra $H$ with $H\mor \ku G$ is isomorphic to $\gth{G}{F}{\Gamma}{\alpha}{\beta}$
for some group-theoretical datum $(F,\alpha, \Gamma, \beta)$ for $G$ by \cite{O}, as sketched above.

Let $\theta\in \Aut G$ and $\gamma \in H^2(G, \ku^\times)$. If  $(F,\alpha, \Gamma, \beta)$ is a group-theoretical datum for  $G$,  then $(\theta(F), \theta_*(\alpha\gamma_{\vert F}), \theta(\Gamma), \theta_*(\beta\gamma_{\vert \Gamma}))$ is again one. Here $\theta_*(\alpha\gamma_{\vert F})$ is the pushforward cocycle, i.e. $\theta_*(\alpha\gamma_{\vert F})(\theta(a),\theta(b))$ $=\alpha(a,b)\gamma(a,b)$, for  $a,b\in F$ and analogously for $\theta_*(\beta\gamma_{\vert \Gamma})$.
Thus  $\Aut G \ltimes H^2(G, \ku^\times)$ acts on the set of group-theoretical data; we say that two group-theoretical data
are \emph{equivalent} if they belong to the same orbit of this action.

\begin{lema}\label{prop:algebras gt equivalentes} Let $(F,\alpha, \Gamma, \beta)$ be a group-theoretical datum for $G$.
\begin{enumerate}[leftmargin=*,label=\rm{(\roman*)}] 
	\item\label{item:iso}  If $\theta\in \Aut G$ and $\gamma \in H^2(G, \ku^\times)$,  then as Hopf
	algebras
	\begin{align*}
\gth{G}{F}{\Gamma}{\alpha}{\beta} \simeq \gth{G}{\theta(F)}{\theta(\Gamma)}{\theta_*(\alpha\gamma_{\vert F})}{\theta_*(\beta\gamma_{\vert \Gamma})}.
	\end{align*}
\hspace{-35pt}	That is, equivalent group-theoretical data produce isomorphic Hopf algebras.

\item\label{item:cocommutative} $\gth{G}{F}{\Gamma}{\alpha}{\beta}$ is cocommutative (resp. commutative) iff $F \lhd G$ is abelian   and $\alpha \in H^2(F, \ku^{\times})^{\ad G}$ (resp. $\Gamma  \lhd G$ is  abelian and
$\beta \in H^2(\Gamma, \ku^{\times})^{\ad G}$).   If $H=\gth{G}{F}{\Gamma}{\alpha}{\beta}$ is cocommutative then $\ku G(H) \mor \ku G$ and every  $G'$ such that $\ku G' \mor \ku G$ arises in this way.

\item\label{item:duals} $\gth{G}{F}{\Gamma}{\alpha}{\beta}^* \simeq\gth{G}{\Gamma}{F}{\beta}{\alpha}$.

\item\label{item:twist}  $\gth{G}{F}{G}{\alpha}{1}$ is a twisting of $\ku G$ and every twisting is of this form.

\item\label{item:abext} If $F\cap \Gamma = 1$, then  $\gth{G}{F}{\Gamma}{\alpha}{\beta}$ is an abelian extension of $\ku^F$ by $\ku\Gamma$: 
\begin{align*}
\ku^F \hookrightarrow \gth{G}{F}{\Gamma}{\alpha}{\beta} \twoheadrightarrow \ku \Gamma.
\end{align*}

\end{enumerate}
\end{lema}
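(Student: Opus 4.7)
The strategy is to exploit the categorical framework recalled in Section \ref{subsec:Group-theoretical}: by Ostrik, $H = \gth{G}{F}{\Gamma}{\alpha}{\beta}$ is reconstructed from the fusion category $\C(G,1;F,\alpha)\mor \vect_G$ together with the fiber functor provided by the indecomposable module category determined by $(\Gamma,\beta)$, and every Hopf algebra Morita equivalent to $\ku G$ arises this way. Each assertion becomes a statement about these data.

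For \ref{item:iso}, an automorphism $\theta\in\Aut G$ lifts to a tensor autoequivalence of $\vect_G$ sending $(F,\alpha)$ to $(\theta(F),\theta_*\alpha)$ and the module-category datum $(\Gamma,\beta)$ to $(\theta(\Gamma),\theta_*\beta)$; a class $\gamma\in H^2(G,\ku^\times)$ acts through an invertible $\vect_G$-bimodule gauge that adjusts $\alpha$ by $\gamma_{\vert F}$ and $\beta$ by $\gamma_{\vert\Gamma}$. In both cases the reconstruction data is preserved up to equivalence, hence the Hopf algebras are isomorphic. For \ref{item:duals}, passing to $H^*$ swaps $\Corep$ with $\Rep$; by Ostrik the dual of $\C(G,1;F,\alpha)$ with respect to the module category associated to $(\Gamma,\beta)$ is $\C(G,1;\Gamma,\beta)$, with the module-category role now played by $(F,\alpha)$, giving $H^*\simeq\gth{G}{\Gamma}{F}{\beta}{\alpha}$. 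Part \ref{item:twist} follows because, when $\Gamma=G$ and $\beta=1$, the associated module category is $\vect_\ku$ with the regular $\vect_G$-action, so the reconstructed Hopf algebra is a Drinfeld twist of $\ku G$; Movshev's classification recalled in Section \ref{subsec:twistings-of-groups} matches twists of $\ku G$ bijectively with the remaining $(F,\alpha)$ data. Part \ref{item:abext} invokes the Kac--Takeuchi formalism: when $F\cap\Gamma=1$, the exact factorization $G=F\Gamma$ produces matched-pair cocycles $(\sigma,\tau)$, and direct comparison identifies $\gth{G}{F}{\Gamma}{\alpha}{\beta}$ with the bicrossed product $\ku^F\,{}^\tau\#_\sigma\ku\Gamma$, which tautologically fits the displayed extension.

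The main obstacle is \ref{item:cocommutative}. By \ref{item:duals} the two parenthetical assertions are exchanged under duality, so it suffices to treat cocommutativity. Now $H$ is cocommutative iff $\Corep H$ is tensor equivalent, with trivial associator, to $\vect_{G'}$ for some finite group $G'$, equivalently iff $\C(G,1;F,\alpha)$ is pointed with trivial associator. The simple $\ku_\alpha F$-bimodules in $\vect_G$ are parametrized by $(F,F)$-double cosets together with projective irreducible representations of the stabilizers $F\cap gFg^{-1}$ for the cocycle obtained as the ratio of $\alpha$ and its $g$-conjugate. Invertibility of every simple forces the double cosets to collapse to single cosets (so $F\lhd G$), the stabilizer $F$ to be abelian, and each cocycle ratio $\alpha\cdot{}^g\alpha^{-1}$ to be cohomologically trivial---which is precisely the adjoint-$G$-invariance of $[\alpha]\in H^2(F,\ku^\times)$. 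Triviality of the associator on the pointed category under these conditions follows by a direct cocycle check. Conversely, under these hypotheses one constructs an explicit group $G'$ (as a central extension controlled by $\alpha$) with $H\simeq\ku G'$, and a fortiori $\ku G'\mor\ku G$. For the last clause, any $G'$ with $\ku G'\mor\ku G$ is realized by Ostrik as $\gth{G}{F}{\Gamma}{\alpha}{\beta}$ for some group-theoretical datum, and cocommutativity of $\ku G'$ forces this datum to be of the form just characterized.
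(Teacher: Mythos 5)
Your overall strategy coincides with the paper's: parts \ref{item:iso}, \ref{item:duals} and \ref{item:twist} are argued exactly as in the text (Tannakian reconstruction from $\C(G,1;F,\alpha)$ with the fiber functor attached to $(\Gamma,\beta)$, the action of $\Aut G\ltimes H^2(G,\ku^\times)$ on these data, Yamagami-type duality, and the identification $\Rep\gth{G}{F}{G}{\alpha}{1}\simeq\Rep G$ plus Movshev/Ostrik for the converse). For \ref{item:cocommutative} the paper simply quotes Naidu's criterion that $\C(G,1;F,\alpha)$ is pointed iff $F\lhd G$ is abelian and $\alpha\in H^2(F,\ku^\times)^{\ad G}$; your double-coset/stabilizer analysis is the standard proof of that criterion, so this is a legitimate (if longer) substitute, and your worry about the associator is superfluous since a pointed category admitting a fiber functor automatically has trivial $3$-cocycle.

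The one genuine gap is in \ref{item:abext}. You assert that a ``direct comparison identifies $\gth{G}{F}{\Gamma}{\alpha}{\beta}$ with the bicrossed product $\ku^F\,{}^{\tau}\#_{\sigma}\ku\Gamma$,'' but no such direct comparison is available: the Hopf algebra is defined by abstract Tannakian reconstruction, and the classes $\alpha\in H^2(F,\ku^\times)$ and $\beta\in H^2(\Gamma,\ku^\times)$ are \emph{not} matched-pair cocycles in $\opext(\ku^{\Gamma},\ku F)$ --- they live on the wrong groups, and it must be explained how they get absorbed into a pair $(\sigma,\tau)$. The paper's route makes this precise: by Natale, $\gth{G}{F}{\Gamma}{\alpha}{\beta}\simeq (A^{J(\beta)})_{\alpha}$ where $A$ is the split extension attached to the exact factorization, $J(\beta)$ is a twist and $(\cdot)_{\alpha}$ a dual $2$-cocycle deformation; one then needs Schauenburg's Lemma 6.3.1 (applied twice) to see that twisting and deforming do not leave the class of abelian extensions of $\ku^F$ by $\ku\Gamma$ with the same associated $3$-cocycle, which is trivial for the split extension. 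Without this intermediate identification and stability statement, your claim that the displayed exact sequence exists for arbitrary $\alpha,\beta$ is unsubstantiated; you should either reproduce that argument or cite those two results explicitly.
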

\pf \ref{item:iso}  Let $H=\gth{G}{F}{\Gamma}{\alpha}{\beta}$, $\mathcal{C}=\vect_G, A=\ku_\alpha F$ and $B=\ku_\beta \Gamma$.    Clearly $H$ is the Tannakian reconstruction of the fiber functor   $$_A\mathcal{C}_A \to \End(_A\mathcal{C}_B)\simeq_\otimes \vect_{\ku}.$$  
If $(\theta, \gamma) \in \Aut G \ltimes Z^2(G, \ku^\times)$, then it induces a tensor automorphism $(\theta_*, \gamma): \vect_G \to \vect_G$ by
\begin{align*}
\ku_g &\mapsto \ku_{\theta(g)},& \gamma_{\ku_g, \ku_h}:=\gamma(g,h)\id_{\ku_{\theta(gh)}}:\ku_{\theta(g)}\otimes \ku_{\theta(h)}&\to \ku_{\theta(gh)}.
\end{align*}
The object $\theta_*(A)=\bigoplus_{g\in F} \ku_{\theta(g)}$ is an algebra in $\C$ with multiplication $u_{\theta(g)}u_{\theta(h)}=\gamma(g,h)\theta(g,h)u_{\theta(gh)}$. The
tensor automorphism $(\theta_*, \gamma)$ induces  a tensor equivalence $(\theta_*, \gamma):\ _{A}\C_{A}\to \ _{\theta_*(A)}\C_{\theta_*(A)}$ such that the following diagram of tensor functors commutes:

\begin{equation}\label{diagrama 1}
\xymatrix{
{_A\C_A}\ar[d]^{\theta_*}  \ar[r] & {\End(_A\mathcal{C}_B)} \ar[d]_{\operatorname{Id}}\\
{_{\theta_*(A)}\C_{\theta_*(A)}} \ar[r] &  {\End(_{\theta_*(A)}\mathcal{C}_{\theta(B)})}. }
\end{equation}
 The Tannakian reconstruction of  $_{\theta(A)}\C_{\theta(A)}\to \End(_{\theta(A)}\mathcal{C}_{\theta(B)})$ is $$\gth{G}{\theta(F)}{\theta(\Gamma)}{\theta_*(\alpha\gamma_{\vert F})}{\theta_*(\beta\gamma_{\vert \Gamma})},$$ so by Tannakian formalism and the commutativity of \eqref{diagrama 1}, the Hopf algebras $\gth{G}{F}{\Gamma}{\alpha}{\beta}$ and $\gth{G}{\theta(F)}{\theta(\Gamma)}{\theta_*(\alpha\gamma_{\vert F})}{\theta_*(\beta\gamma_{\vert \Gamma})}$ are isomorphic.

\ref{item:cocommutative} A semisimple Hopf algebra $H$ is cocommutative, respectively commutative, if and only if $\Corep H$, respectively $\Rep H$, is pointed.  By \cite[Theorem 3.4]{Naidu},  $\C(G, 1; F, \alpha)$ is pointed if and only if $F$ is a normal abelian subgroup of $G$ and $\alpha$ is  $\ad G$-invariant. This implies the claim.

\ref{item:duals} Let $(F,\alpha,\Gamma,\beta)$ be a group-theoretical datum for $G$. We have associated  two fusion categories $ \  _{\ku_\alpha F}(\vect_G)_{\ku_\alpha F}$, $\  _{\ku_\beta \Gamma}(\vect_G)_{\ku_\beta \Gamma}$ and the bimodule category of rank one $\ _{\ku_\alpha F}(\vect_G)_{\ku_\beta \Gamma}$. The Hopf algebra $\gth{G}{F}{\Gamma}{\alpha}{\beta}$ (resp. $\gth{G}{\Gamma}{F}{\beta}{\alpha}$) is by definition the Tannaka reconstruction of $ \  _{\ku_\alpha F}(\vect_G)_{\ku_\alpha F}$ (resp. $\  _{\ku_\beta \Gamma}(\vect_G)_{\ku_\beta \Gamma}$) respect to the fiber functor defined by the left (resp. right) module category $\ _{\ku_\alpha F}(\vect_G)_{\ku_\beta \Gamma}$. Now, it follows by \cite[Appendix C]{Ya} that $\gth{G}{F}{\Gamma}{\alpha}{\beta}^* \simeq\gth{G}{\Gamma}{F}{\beta}{\alpha}$.

\ref{item:twist} The fusion category  $$\Rep \gth{G}{F}{G}{\alpha}{1} \simeq
\Corep \gth{G}{G}{F}{1}{\alpha} \simeq \C(G, 1; G, 1)= {}_{\ku G}(\vect_{G})_{\ku G}$$ is tensor equivalent to $\Rep G$. Then $\gth{G}{F}{G}{\alpha}{1}$ is twist equivalent to $\ku G$. Conversely, if  $H\simeq (\ku G)^J$, then $\Rep H\simeq \Rep G\simeq 
\C(G, 1; G, 1)$. Thus $H\simeq \gth{G}{F}{G}{\alpha}{1}$ for some $F<G$ and $\alpha\in Z^2(F, \mathbb{C}^\times)$ non-degenerate. 

\ref{item:abext} Let $A$ be any extension of $\ku^{F}$ by  $\ku \Gamma$ with 3-cocycle $\omega$. By \cite[Lemma 6.3.1]{S-adv},
 $A^{J(\beta)}$ and $A_\alpha$ are extensions of $\ku^{F}$ by  $\ku \Gamma$ with 3-cocycle $\omega$. 
 Now by \cite[Remark 4.6]{Natale}, $H = \gtha{G}{\omega}{F}{\Gamma}{\alpha}{\beta}\simeq A^{J(\beta)}_\alpha = (A^{J(\beta)})_\alpha$; notice 
 that $A^G_{\alpha, \beta}(\omega, \Gamma, F) \simeq H^*$, because of  the convention in \cite{Natale} that $\Rep A^G_{\alpha, \beta}(\omega, \Gamma, F)$
is tensor equivalent to $\C(G, \omega; F, \alpha)$.
Hence $H$ is an extension of $\ku^{F}$ by  $\ku \Gamma$ with 3-cocycle $\omega$, applying twice \cite[Lemma 6.3.1]{S-adv}.
Since the split extension $\ku^{F} \# \ku \Gamma$ has 3-cocycle $1$, $\gth{G}{F}{\Gamma}{\alpha}{\beta}$ turns out to be an abelian extension.
\epf

We shall compute all group-theoretical data of some specific groups and then determine the isomorphism classes of the 
corresponding group-theoreti\-cal Hopf algebras. We present now some auxiliary results for this end.

\begin{obs}\label{obs:aux-opext} Let $(F,\alpha, \Gamma, \beta)$ be a group-theoretical datum for $G$. If $\gamma \in H^2(G, \ku^\times)$, such that $\gamma\vert_F= \alpha^{-1}$ then $(F,\alpha, \Gamma, \beta)$ and  $(F,1, \Gamma, \beta\gamma\vert_\Gamma)$ are equivalent, so $\gth{G}{F}{\Gamma}{\alpha}{\beta} \simeq  \gth{G}{F}{\Gamma}{1}{\beta\gamma\vert_\Gamma}$. Similarly, with $\Gamma$ instead of $F$.
\end{obs}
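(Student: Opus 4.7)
The statement is a direct consequence of Lemma~\ref{prop:algebras gt equivalentes}\ref{item:iso} applied with $\theta = \id_G$ and the given $\gamma \in H^2(G, \ku^\times)$. The plan is first to verify that the data genuinely sit in the same $\Aut G \ltimes H^2(G, \ku^\times)$-orbit, i.e. that $(F, 1, \Gamma, \beta \gamma\vert_\Gamma)$ is actually a group-theoretical datum for $G$, and then to invoke part \ref{item:iso} of the lemma.

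For the first point, I would note that the factorization condition $G = F\Gamma$ is unchanged, so only the non-degeneracy of the cocycle on $F \cap \Gamma$ needs to be checked. Using the hypothesis $\gamma\vert_F = \alpha^{-1}$ in $H^2(F, \ku^\times)$, restricting further to $F \cap \Gamma$ gives $\gamma\vert_{F \cap \Gamma} = \alpha^{-1}\vert_{F \cap \Gamma}$, and therefore
\begin{align*}
1\vert_{F \cap \Gamma} \cdot (\beta \gamma\vert_\Gamma)^{-1}\vert_{F \cap \Gamma}
= \beta^{-1}\vert_{F \cap \Gamma} \cdot \gamma^{-1}\vert_{F \cap \Gamma}
= \alpha\vert_{F \cap \Gamma} \cdot \beta^{-1}\vert_{F \cap \Gamma},
\end{align*}
which is non-degenerate by the original hypothesis on $(F, \alpha, \Gamma, \beta)$. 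Hence $(F, 1, \Gamma, \beta\gamma\vert_\Gamma)$ satisfies both bullets of Definition~\ref{def:gp-th}.

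For the second point, I would apply Lemma~\ref{prop:algebras gt equivalentes}\ref{item:iso} with $\theta = \id_G$: the pushforward cocycles become just the original cocycles, so
\begin{align*}
(\theta(F), \theta_*(\alpha \gamma\vert_F), \theta(\Gamma), \theta_*(\beta \gamma\vert_\Gamma))
= (F, \alpha \gamma\vert_F, \Gamma, \beta \gamma\vert_\Gamma)
= (F, 1, \Gamma, \beta \gamma\vert_\Gamma),
\end{align*}
where the last equality uses $\gamma\vert_F = \alpha^{-1}$ so that $\alpha \gamma\vert_F = 1$ in $H^2(F, \ku^\times)$. Lemma~\ref{prop:algebras gt equivalentes}\ref{item:iso} then yields the asserted isomorphism of Hopf algebras. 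The analogous statement with $\Gamma$ in the role of $F$ follows by the symmetric argument, swapping the roles of the two subgroups and using Lemma~\ref{prop:algebras gt equivalentes}\ref{item:duals} if desired.

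There is no real obstacle here: the argument is purely formal once one recognizes that the hypothesis $\gamma\vert_F = \alpha^{-1}$ is exactly what makes the $F$-component of the pushforward trivial. The only mild subtlety is the compatibility check on $F \cap \Gamma$, which works out because group cohomology is abelian.
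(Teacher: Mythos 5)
Your argument is correct and is exactly the route the paper intends: the remark is stated without proof as an immediate consequence of Lemma~\ref{prop:algebras gt equivalentes}\ref{item:iso} applied with $\theta=\id_G$, together with the observation (already made in \S\ref{subsec:Group-theoretical}) that the $\Aut G\ltimes H^2(G,\ku^\times)$-action preserves group-theoretical data. Your explicit verification that $\alpha\vert_{F\cap\Gamma}\cdot\beta^{-1}\vert_{F\cap\Gamma}$ is recovered on $F\cap\Gamma$ is a welcome extra detail, and the appeal to Lemma~\ref{prop:algebras gt equivalentes}\ref{item:duals} for the $\Gamma$-version is unnecessary since the symmetric case is again just the same action with the roles of the two cocycle conditions interchanged.
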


\begin{lema}\label{lema:aux-G(H)} Let $(F,\alpha, \Gamma, \beta)$ be a group-theoretical datum, $H = \gth{G}{F}{\Gamma}{\alpha}{\beta}$, $K= \{\overline{g} \in N_G(F)/F: [\alpha] = [\alpha^g] \}$. Then $G(H)$ fits into an exact sequence
\begin{align}\label{eq:G(gp-th)}
\xymatrix{1\ar[r] & \widehat{F} \ar@{^{(}->}[r] & G(H)
	\ar@{->>}[r] & K \ar[r] & 1},
\end{align}
\end{lema}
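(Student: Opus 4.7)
The plan is to identify $G(H)$ with the group $\mathcal{G}(\Corep H)$ of (isomorphism classes of) invertible simple objects of the fusion category $\Corep H$, which is tensor equivalent to $\C(G,1; F, \alpha) = {}_{A}(\vect_G)_{A}$ with $A = \ku_\alpha F$, and then describe these invertibles via Ostrik's classification of simple objects \cite{O}. The first reduction is standard: grouplike elements of $H$ are in bijection with $1$-dimensional simple $H$-comodules, and this bijection respects the tensor structure, so $G(H) \simeq \mathcal{G}(\Corep H) \simeq \mathcal{G}(\C(G,1; F, \alpha))$.

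Next, by \cite[Corollary 3.1]{O}, the isomorphism classes of simple objects of $\C(G,1; F, \alpha)$ are parametrized by pairs $(g, V)$, where $g$ ranges over representatives of the double cosets $F\backslash G/F$ and $V$ is an irreducible representation of a twisted group algebra $\ku_{\alpha_g}(F \cap gFg^{-1})$, for an explicit $\alpha_g \in Z^2(F \cap gFg^{-1}, \ku^{\times})$ built from $\alpha$. The Frobenius--Perron dimension of the simple indexed by $(g,V)$ equals $[F : F\cap gFg^{-1}] \cdot \dim V$, so invertibility forces $g \in N_G(F)$ and $\dim V = 1$. For $g \in N_G(F)$, the cocycle class $[\alpha_g] \in H^2(F, \ku^{\times})$ coincides with $[\alpha^g][\alpha]^{-1}$, where $\alpha^g(s,t) = \alpha(g^{-1}sg, g^{-1}tg)$; a $1$-dimensional representation of $\ku_{\alpha_g} F$ then exists iff this class is trivial, i.e. iff $\bar g \in K$, and in that case the set of such representations is a torsor over $\widehat F$ under tensoring with characters.

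The resulting parametrization of invertible simples by the set $K \times \widehat F$ yields a map $\pi\colon G(H) \twoheadrightarrow K$, $(\bar g, \chi) \mapsto \bar g$, which is a group homomorphism because the $G$-grading is compatible (up to left--right $F$-translation) with the tensor product $\otimes_A$: the double-coset support of $(g_1, \chi_1) \otimes_A (g_2, \chi_2)$ is $Fg_1 g_2 F$. The kernel of $\pi$ is formed by the simples supported at the trivial double coset, for which $\alpha_1 = 1$; these are precisely the characters of $F$, giving the identification of $\ker \pi$ with $\widehat F$. This yields the exact sequence \eqref{eq:G(gp-th)}.

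The main obstacle is verifying that $\pi$ is well-defined as a group homomorphism, which requires carefully unpacking Ostrik's cocycle $\alpha_g$ and the structure of the tensor product in ${}_A(\vect_G)_A$. This is a bookkeeping exercise involving choices of double-coset representatives and trivializations $d\varphi_g = \alpha^g \alpha^{-1}$, rather than a conceptual difficulty, and can be bypassed by invoking the description of $\Corep H$ directly from \cite{O}.
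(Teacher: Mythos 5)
Your argument is correct and begins with the same reduction as the paper: $G(H)$ is identified with the group of invertible objects of $\Corep H \simeq \C(G,1;F,\alpha)$. The difference lies in what happens next. The paper's proof is a one-line citation of Gelaki--Naidu \cite{GeN}, where the group of invertible objects of a group-theoretical category is computed and shown to sit in exactly the extension \eqref{eq:G(gp-th)}; you instead re-derive that result from the double-coset parametrization of simple objects of ${}_{A}(\vect_G)_{A}$ and a Frobenius--Perron dimension count. Your derivation is sound: invertibility forces $FgF=gF$, hence $g\in N_G(F)$ and $\dim V=1$; a one-dimensional representation of $\ku_{\alpha_g}F$ exists iff the class $[\alpha^g][\alpha]^{-1}$ vanishes, i.e. iff $\overline{g}\in K$, and the fibre over such $\overline{g}$ is a $\widehat{F}$-torsor; the invertibles supported on the trivial coset are the characters of $F$. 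The steps you defer as ``bookkeeping'' (that the projection to $K$ is a group homomorphism compatible with $\otimes_A$, and the precise identification of $\alpha_g$) are precisely the verifications carried out in \cite{GeN}, so nothing essential is missing. In short, the two proofs have the same mathematical content; yours is self-contained where the paper's delegates the computation to the literature, at the cost of having to track the cocycles $\alpha_g$ explicitly.
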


\pf Since $G(H)$ is isomorphic to the group of invertible objects in the tensor category $\Corep H \simeq \C(G, 1; F, \alpha)$,
the claim follows from \cite{GeN}.
\epf

The action and the cocycle in \eqref{eq:G(gp-th)} are explicitly given in \cite{GeN}. In particular, if $\alpha = 1$, then  $G(H) \simeq \widehat{F} \rtimes  N_G(F)/F$.

\begin{lema}\label{lema:aux-twist} Let $G'$ be a finite group. Let $(F,\alpha, \Gamma, \beta)$ and $(F',\alpha', \Gamma', \beta')$ be group-theoretical data for $G$ and $G'$ respectively. Then  $\Rep \gth{G}{F}{\Gamma}{\alpha}{\beta}$ and $\Rep \gth{G'}{F'}{\Gamma'}{\alpha'}{\beta'}$ are equivalent as tensor categories if and only if there is an invertible 
$\vect_G$-$\vect_{G'}$-bimodule category $\mathcal{X}$ such that 
$$\mathcal{X}\boxtimes_{\vect_{G'}}\M(\Gamma',\beta')\simeq \M(\Gamma,\beta)$$ as $\vect_G$-module categories, where $\M(\Gamma',\beta')$ and $\M(\Gamma,\beta)$ are the module categories associated to the pairs $(\Gamma',\beta')$ and $(\Gamma,\beta)$.
\end{lema}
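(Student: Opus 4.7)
The strategy is to translate the question about representation categories into one about module categories over $\vect_G$ and $\vect_{G'}$. The key identification is
\begin{align*}
\Rep \gth{G}{F}{\Gamma}{\alpha}{\beta} \simeq \End_{\vect_G}\bigl(\M(\Gamma,\beta)\bigr)^{\op},
\end{align*}
and likewise for the primed data. This follows by combining Lemma \ref{prop:algebras gt equivalentes}\ref{item:duals} with the Tannakian description in \S \ref{subsec:Group-theoretical}: since $\Rep H \simeq \Corep H^*$ and $H^* \simeq \gth{G}{\Gamma}{F}{\beta}{\alpha}$, we get $\Rep H \simeq \C(G, 1; \Gamma, \beta)$, which by construction is the category of $\ku_\beta \Gamma$-bimodules in $\vect_G$, that is, $\End_{\vect_G}(\M(\Gamma,\beta))^{\op}$.

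With this reformulation, the lemma reduces to a general Morita-theoretic statement: for indecomposable module categories $\M_i$ over fusion categories $\C_i$ ($i=1,2$), the dual tensor categories $\End_{\C_i}(\M_i)^{\op}$ are tensor equivalent if and only if there exists an invertible $\C_1$-$\C_2$-bimodule category $\X$ such that $\X\boxtimes_{\C_2}\M_2 \simeq \M_1$ as $\C_1$-module categories. This is a standard fact in the 2-categorical framework of fusion categories; cf.\ \cite{ENO2}.

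The sufficiency is essentially formal: given such an $\X$, invertibility yields a chain of tensor equivalences
\begin{align*}
\End_{\C_2}(\M_2) \simeq \End_{\C_1}\bigl(\X\boxtimes_{\C_2}\M_2\bigr) \simeq \End_{\C_1}(\M_1),
\end{align*}
which translates back through the identification above to the desired equivalence of representation categories. For the necessity, starting from a tensor equivalence $\Phi$ between the endomorphism categories, one views $\M_1$ as an invertible $(\C_1, \End_{\C_1}(\M_1)^{\op})$-bimodule category, transports its right action along $\Phi^{-1}$ to make it a $(\C_1, \End_{\C_2}(\M_2)^{\op})$-bimodule, and defines $\X$ as the relative tensor product with $\M_2$. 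Verifying that $\X$ is an invertible $\vect_G$-$\vect_{G'}$-bimodule category realizing the relation $\X \boxtimes_{\vect_{G'}} \M(\Gamma',\beta') \simeq \M(\Gamma,\beta)$ is the main technical obstacle and requires the full Brauer-Picard 2-groupoid formalism of \cite{ENO2}.
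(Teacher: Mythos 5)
Your proposal follows essentially the same route as the paper: the paper identifies $\Rep \gth{G}{F}{\Gamma}{\alpha}{\beta}$ with $\C(G,1;\Gamma,\beta)$ (the dual of $\vect_G$ with respect to $\M(\Gamma,\beta)$) and then invokes \cite[Proposition 5.1]{GJ}, which is exactly the general Morita-theoretic statement you reduce to. The only difference is that where the paper cites that proposition, you sketch its proof via the Brauer--Picard formalism of \cite{ENO2}; your sketch is correct in outline (in the necessity direction the bimodule $\X$ should be built from $\M_1$ and the \emph{opposite} of $\M_2$, transported along the given tensor equivalence), so this is a valid, if more self-contained, rendering of the same argument.
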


\pf Recall that $\Rep \gth{G}{F}{\Gamma}{\alpha}{\beta}\simeq_\otimes \C(G,1;\Gamma, \beta)$ and $\Rep \gth{G'}{F'}{\Gamma'}{\alpha'}{\beta'}\simeq_\otimes \C(G',1;\Gamma', \beta')$. Then \cite[Proposition 5.1]{GJ} applies.
\epf

\section{Group-theoretical Hopf algebras over $G = C_3\rtimes C_6$}\label{sec:c3c6} 

The group $\st$ has a Yetter-Drinfeld module $V$ with $\dim \toba(V) = 12$ \cite{MS},
but there is no non-trivial group-theoretical Hopf algebra over $\st$. 
Indeed, the classification of the Hopf algebras of dimension $6 = |\mathbb{S}_3|$ is known:  $\ku C_6$, $\ku\mathbb{S}_3$ and $\ku^{\mathbb{S}_3}$.
Hence the only group-theoretical Hopf algebras over $\mathbb{S}_3$, up to Morita-equivalence, are $\ku\mathbb{S}_3$ and $\ku^{\mathbb{S}_3}$.

However the braided vector space $V$ can be realized as a Yetter-Drinfeld module over $G_m=C_3\rtimes C_{2m}$, $m\in \mathbb{N}$, see \cite[\S 4]{GVay}. Here we deal with the group-theoretical Hopf algebras over  $C_3\rtimes C_6$.  Notice that the classification of the semisimple Hopf algebras of dimension 18 is known \cite{Masuoka}.

Let $\xi\in \G_3$, $L=C_2=\langle x\rangle$, $N = C_3\times C_3=\langle a\rangle \times \langle b\rangle$. The Hopf algebra $A_{18,\xi}$, defined in \cite[1.2]{Masuoka}, is the abelian extension associated to the matched pair $(L, N)$ with $\triangleright: N \times L\to L$ trivial, $\triangleleft : N \times L\to N$ given by $a^ib^j \triangleleft x=a^ib^{-j}$ and cocycles $\sigma : L\times L\to (\ku^N)^\times$ trivial and $\tau : N \times N \to (\ku^L)^\times$ given by $\tau(a^ib^j, a^rb^s)=\delta_1+\xi^{jr}\delta_x$. Accordingly, $G =  C_3\rtimes C_6 = \langle x, a, b\rangle$.
Also, $A_{18,\xi} \simeq A_{18,\eta} \iff \vert \xi\vert = \vert \eta\vert$ \cite[1.5]{Masuoka}.

\begin{prop}\label{prop:c3c6} The non-trivial group-theoretical Hopf algebras over $G$ are $A_{18, \xi}$ and $(A_{18, \xi})^*$, $\xi\in \G'_3$.
\end{prop}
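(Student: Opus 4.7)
The plan is to enumerate all group-theoretical data $(F,\alpha,\Gamma,\beta)$ for $G$ modulo the equivalence of Lemma \ref{prop:algebras gt equivalentes}, and identify the Hopf algebras produced. Observe that $G\simeq \mathbb{S}_3\times C_3$ has order $18$, with unique (normal) Sylow 3-subgroup $N=\langle a,b\rangle\cong C_3\times C_3$; any group-theoretical Hopf algebra over $G$ is semisimple of dimension $18$ and Morita-equivalent to $\ku G$. By Masuoka's classification of semisimple Hopf algebras in dimension $18$, the only ones Morita-equivalent to $\ku G$ are $\ku G$, $\ku^G$, $A_{18,\xi}$, and $(A_{18,\xi})^*$ (the other isomorphism classes $\ku C_{18}$, $\ku D_9$ and their duals have non-equivalent Drinfeld centers), so the only non-trivial candidates are $A_{18,\xi}$ and $(A_{18,\xi})^*$.

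The first step is to restrict admissible data. Non-degeneracy of $\alpha_{|F\cap\Gamma}\cdot\beta_{|F\cap\Gamma}^{-1}$ forces $|F\cap\Gamma|$ to be a perfect square, hence $|F\cap\Gamma|\in\{1,9\}$, and in the latter case $F\cap\Gamma=N$. The relevant cohomology is $H^2(N,\ku^\times)\cong C_3$ (the two non-trivial classes are non-degenerate and are $\Aut(G)$-conjugate, since conjugation by $x$ acts by inversion on $H^2(N,\ku^\times)$), $H^2(G,\ku^\times)=1$, and $H^2(S,\ku^\times)=1$ for every other subgroup $S\le G$. In particular, any cocycle on a subgroup equal to $G$ or not containing $N$ is automatically trivial, and Remark \ref{obs:aux-opext} gives no further reductions here. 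Only a handful of inequivalent data survive.

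The second step is to identify the Hopf algebras. Data with $\alpha=\beta=1$ yield $\ku G$ or $\ku^G$ as split bicrossed products arising from exact factorizations. The datum $(N,\alpha,L,1)$ with $L=\langle x\rangle$ and $\alpha$ a non-trivial class in $H^2(N,\ku^\times)$ yields, by Lemma \ref{prop:algebras gt equivalentes}\ref{item:abext}, an abelian extension $\ku^N\hookrightarrow H\twoheadrightarrow\ku L$; picking an explicit representative of $\alpha$ and tracing it through the Kac exact sequence, the resulting $\tau$-cocycle matches Masuoka's formula $\tau(a^ib^j,a^rb^s)=\delta_1+\xi^{jr}\delta_x$ for appropriate $\xi\in\G'_3$, so $H\simeq A_{18,\xi}$. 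Lemma \ref{prop:algebras gt equivalentes}\ref{item:duals} handles the swapped datum, giving $(A_{18,\xi})^*$. The remaining cases (with $F=G$ or $\Gamma=G$ and a non-trivial class on $N$) produce twists of $\ku G$ or their duals, which by Masuoka's classification must again coincide with $A_{18,\xi}$ or $(A_{18,\xi})^*$. The main obstacle is precisely the explicit identification $\gth{G}{N}{L}{\alpha}{1}\simeq A_{18,\xi}$: producing a concrete representative of the non-trivial class in $H^2(C_3\times C_3,\ku^\times)$ and verifying that its Kac-sequence image equals Masuoka's $\tau$. Once that identification is established, the finite enumeration of data closes the argument.
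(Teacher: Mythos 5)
Your overall skeleton (enumerate the data up to the equivalence of Lemma \ref{prop:algebras gt equivalentes}, then identify the resulting Hopf algebras via Masuoka's dimension-18 classification) matches the paper's, but the proposal contains a false intermediate claim and leaves the decisive identification undone. The false claim is that ``data with $\alpha=\beta=1$ yield $\ku G$ or $\ku^G$ as split bicrossed products.'' This is not true: with $M=\langle x,b\rangle\simeq\st$, the pairs $(\langle ab\rangle, M)$ and $(\langle ab\rangle,\langle x,a\rangle)$ are exact factorizations of $G$ in which $\langle ab\rangle$ is not normal and the complement is either non-abelian or non-normal, so by Lemma \ref{prop:algebras gt equivalentes} \ref{item:cocommutative} the bismash products $\gth{G}{\langle ab\rangle}{M}{1}{1}$ and $\gth{G}{\langle ab\rangle}{\langle x,a\rangle}{1}{1}$ are neither cocommutative nor commutative, hence non-trivial, even though all cocycles involved are trivial. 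These are exactly cases \eqref{c3c6-cuatro} and \eqref{c3c6-cinco} of the paper; they happen to be isomorphic to $A_{18,\xi}$, so your final list is not damaged, but your enumeration mislabels them as trivial and would give a wrong answer for a group where such bismash products are genuinely new.

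On the identification step, you reduce everything to tracing an explicit representative of the non-trivial class in $H^2(N,\ku^\times)$ through the Kac exact sequence to recover Masuoka's $\tau$, and you acknowledge this is not carried out; that is precisely the computation the paper avoids. The paper instead computes $G(H)$ and $G(H^*)$ for each datum via Lemma \ref{lema:aux-G(H)} (for $\alpha=1$ this is just $\widehat{F}\rtimes N_G(F)/F$) and then invokes Masuoka's results [M, 2.3, 2.5], which characterize the non-trivial semisimple Hopf algebras of dimension 18 with $|G(H)|=9$, resp.\ $6$, as $A_{18,\xi}$, resp.\ $(A_{18,\xi})^*$. Your alternative shortcut --- that among Masuoka's list only four classes are Morita-equivalent to $\ku G$ --- is also unjustified as stated: it requires showing that the group algebras of the other four groups of order 18 and their duals are not Morita-equivalent to $\ku G$, and it presupposes $A_{18,\xi}\mor\ku G$, which is part of what must be proved. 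So the proposal has a genuine gap at the identification stage in addition to the erroneous dismissal of the trivial-cocycle data.
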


\pf Since  $|G|=2\times 3^2$, the unique non-trivial subgroup with a non-degenerated 2-cocycle is    $N \simeq C_3\times C_3$. Let $M =\langle x,b\rangle \simeq \st$. Let $(F,\alpha, \Gamma, \beta)$ be a group-theoretical data for $G$; then $F \cap \Gamma$ is either 1 or $N$.

\noindent\emph{Case 1. } $F \cap \Gamma = 1$.  Up to conjugation, either $(F, \Gamma)$ or $(\Gamma, F)$ is one of

\begin{minipage}{.3\textwidth}
\begin{align}
\label{c3c6-uno}
 &(L, N)
\\ \label{c3c6-dos}
 & (\langle b \rangle, \langle x, a \rangle)
 \end{align}
 \end{minipage}
 \begin{minipage}{.3\textwidth}
 \begin{align}
\label{c3c6-tres}
 & (\langle a \rangle, M)
\\ \label{c3c6-cuatro}
 & (\langle ab \rangle, M)
 \end{align}
 \end{minipage}
 \begin{minipage}{.3\textwidth}
\begin{align}
 \label{c3c6-cinco}
& (\langle ab \rangle, \langle x, a \rangle).
\end{align}
\end{minipage}

If $(F, \Gamma)$ is as in \eqref{c3c6-dos}, \eqref{c3c6-tres}, $F\lhd G$ hence $\gth{G}{F}{\Gamma}{\alpha}{\beta}$  is cocommutative, cf. Lemma \ref{prop:algebras gt equivalentes} \ref{item:cocommutative}. If $(F, \Gamma)$ is as in \eqref{c3c6-cuatro}, $F \ntriangleleft G$ and $\Gamma$ is non-abelian, hence $\gth{G}{F}{\Gamma}{\alpha}{\beta}$ is non-trivial. If $(F, \Gamma)$ is as in \eqref{c3c6-cinco}, $F, \Gamma\ntriangleleft G$, hence $\gth{G}{F}{\Gamma}{\alpha}{\beta}$ is non-trivial. If $(F, \Gamma)$ is as in \eqref{c3c6-uno}, $F\ntriangleleft G$, $H^2(N, \ku^\times )\simeq C_3$ and $H^2(N, \ku^\times)^{\ad G}=1$, then $\gth{G}{G}{N}{1}{\beta}$, $\beta\neq 1$, is non-trivial. Moreover, these cocycles give rise to isomorphic Hopf algebras. 

\noindent \noindent\emph{Case 2. }  $F \cap \Gamma = N$, so that $(F, \Gamma)$  is either $(N, G)$--denoted by (3.6)-- or else  $(G, N)$. Since $H^2(N, \ku^\times)\simeq C_3$ and $H^2(N, \ku^\times)^{\ad G}=1$,  $\gth{G}{G}{N}{1}{\beta}$, $\beta\neq 1$, is non-trivial. Further, these cocycles give rise to isomorphic Hopf algebras. 

So we have the following possibilities:

	\begin{center}
		\begin{tabular}{|c|c|c|c|c|c|c|}
			\hline     $\#$ & $F$ &  $\alpha$ & $\Gamma$ & $\beta$ &  $G(H)$ &  $G(H^*)$
			\\ \hline
			\eqref{c3c6-uno} &  $L\simeq C_2 $ & 1 & $N\simeq C_{3} \times C_3$ & $\neq 1$ &   $C_6$ &  $C_3\times C_3$ 
        \\ \hline
            \eqref{c3c6-cuatro} &  $\langle ab \rangle\simeq C_3 $ & 1 & $M \simeq \mathbb{S}_3$ & 1 &   $C_3\times C_3$ &   $C_6$
            \\ \hline
           \eqref{c3c6-cinco} &  $\langle ab \rangle $ & 1 & $\langle x, a \rangle \simeq C_{6}$ & 1  &   $C_3\times C_3$ &   $C_6$
            \\ \hline
          (3.6) &  $N $ & $\neq 1$ & $G$ & 1 &   $C_3\times C_3$ &   $C_6$
				\\ \hline
		\end{tabular}
	\end{center}

By \cite[2.3, 2.5]{Masuoka}, since $|G(H)|=9$, $\eqref{c3c6-uno}^*\simeq   \eqref{c3c6-cuatro} \simeq  \eqref{c3c6-cinco} \simeq (3.6) \simeq A_{18, \xi}$; since $|G(H)|=6$, $\eqref{c3c6-uno} \simeq  \eqref{c3c6-cuatro}^* \simeq  \eqref{c3c6-cinco}^* \simeq (3.6)^* \simeq (A_{18, \xi})^*$.
\epf

Thus  $A_{18, \xi}\simeq (3.6)$ is a twisting of $\ku G$.

\begin{theorem}\label{prop:c3c6-pointed}	
The Hopf algebras $A_{18, \xi}$ and $(A_{18, \xi})^*$ have a non-zero Yetter-Drinfeld module $V$ with $\dim \toba(V) < \infty$. 
By bosonization, we get new Hopf algebras with the dual Chevalley property of dimension 216. 
\qed
\end{theorem}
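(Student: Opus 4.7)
The plan is to transport the Nichols algebra of $(\D_3,-1)$ across the Morita equivalences $A_{18,\xi} \mor \ku G$ and $(A_{18,\xi})^* \mor \ku G$ supplied by Proposition \ref{prop:c3c6} (together with $(A_{18,\xi})^* \mor A_{18,\xi}$ via $\M = \vect_{\ku}$ as in \S\ref{subsec:intro4}), and then bosonize. By \cite[\S 4]{GVay}, the braided vector space $(\D_3,-1)$ is realized as a Yetter--Drinfeld module $V \in \ydG$ with $G = C_3 \rtimes C_6$, and $\dim \toba(V) = 12$ by \cite{MS}.

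Applying the braided tensor equivalences
\[
\Fc \colon \ydG \longrightarrow {}^{A_{18,\xi}}_{A_{18,\xi}}\mathcal{YD}, \qquad \Fc' \colon \ydG \longrightarrow {}^{(A_{18,\xi})^*}_{(A_{18,\xi})^*}\mathcal{YD}
\]
recalled in \S\ref{subsec:intro3}, set $V' := \Fc(V)$ and $V'' := \Fc'(V)$. Since braided equivalences preserve Nichols algebras, $\toba(V') \simeq \toba(V) \simeq \toba(V'')$ as algebras and coalgebras, so each has dimension $12$; in particular both are nonzero. Forming the bosonizations
\[
\Ac := \toba(V') \# A_{18,\xi}, \qquad \Ac' := \toba(V'') \# (A_{18,\xi})^*
\]
yields Hopf algebras of dimension $12 \cdot 18 = 216$. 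Since $A_{18,\xi}$ and $(A_{18,\xi})^*$ are cosemisimple and each Nichols algebra is a connected graded coalgebra in the ambient Yetter--Drinfeld category, the coradicals of $\Ac$ and $\Ac'$ are $A_{18,\xi}$ and $(A_{18,\xi})^*$ respectively, so both Hopf algebras have the dual Chevalley property.

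Once Proposition \ref{prop:c3c6} is in hand no serious obstacle remains: the construction proceeds by the general machinery of \S\ref{subsec:intro3}--\S\ref{subsec:intro4}. The content of the statement is that these $216$-dimensional Hopf algebras are genuinely new, i.e.\ neither pointed nor copointed; this follows since, by Proposition \ref{prop:c3c6}, $|G(A_{18,\xi})| = 9 < 18$ and $|G((A_{18,\xi})^*)| = 6 < 18$, so neither coradical is a group algebra nor the dual of one. The unsatisfying point--rather than a technical obstacle--is that the equivalences $\Fc$, $\Fc'$ are not explicit, so one obtains existence of $V'$, $V''$ without a concrete description of these Yetter--Drinfeld modules nor of defining relations for $\Ac$, $\Ac'$.
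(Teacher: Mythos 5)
Your proposal is correct and is essentially the argument the paper intends: the theorem is stated with an immediate \qed precisely because it follows from Proposition \ref{prop:c3c6}, the realization of $(\D_3,-1)$ over $C_3\rtimes C_6$ from \cite[\S 4]{GVay}, and the general fact from \S\ref{subsec:intro3} that a braided equivalence $\ydG\to{}^{H}_{H}\mathcal{YD}$ coming from Morita equivalence preserves Nichols algebras, followed by bosonization ($12\cdot 18=216$). Your additional observations (non-triviality via $|G(H)|$, and the non-explicitness of the transported module) match the remarks the authors themselves make in \S\ref{subsec:intro5}.
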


The liftings of $\toba(V) \# \ku G$, where $V$ is as above, are classified in \cite[Theorem 6.2]{GVay}. Indeed, let $(x_j)_{0\leqslant j \leqslant 2}$ be the basis of $V$ as in \emph{loc. cit.} Let
\begin{align*}
\Ss= \{(\lambda_1, \lambda_2) \in \ku^2 \text{ satisfying  \cite[(29), (33)]{GVay}}\}.
\end{align*}
For $(\lambda_1, \lambda_2) \in \Ss$,  let $H(\lambda_1,\lambda_2)$ be 
$T(V)\ \# \ku G$ modulo the ideal generated by
\begin{align*}
x^2_0 &- \lambda_1 (1 - g^2_0)& &\text{and}&
x_0x_1 + x_1x_2 + x_2x_0 - \lambda_2(1 - g_0 g_1).    
\end{align*}
Then
\begin{itemize}[leftmargin=*]
\item $H(\lambda_1,\lambda_2)$ is a lifting of 
$\toba(V) \# \ku G$,

\item any lifting of $\toba(V)\ \# \ku G$ is isomorphic to $H(\lambda_1,\lambda_2)$ for some $(\lambda_1, \lambda_2) \in \Ss$,

\item $H(\lambda_1,\lambda_2) \simeq H(\lambda'_1,\lambda'_2)$ iff there exists $\mu \in \ku^{\times}$ such that $(\lambda_1,\lambda_2) = \mu (\lambda'_1,\lambda'_2)$.
\end{itemize}
\begin{obs}\label{obs-lifting-c3c6} The classification of all liftings of $\toba(V) \# A_{18, \xi}$ follows from \cite[Th. 6.2]{GVay}. Namely, let $J \in \ku G \otimes \ku G$ such that
$A_{18, \xi}\simeq (\ku G)^J$. Then
\begin{itemize}[leftmargin=*]
\item $H(\lambda_1,\lambda_2)^J$ is a lifting of 
$\toba(V)\ \# A_{18, \xi}$, for every $(\lambda_1, \lambda_2) \in \Ss$,

\item any lifting of $\toba(V)\ \# A_{18, \xi}$ is $\simeq$ to $H(\lambda_1,\lambda_2)^J$ for some $(\lambda_1, \lambda_2) \in \Ss$,

\item $H(\lambda_1,\lambda_2)^J \simeq H(\lambda'_1,\lambda'_2)^J$ iff there is $\mu \in \ku^{\times}$ such that $(\lambda_1,\lambda_2) = \mu (\lambda'_1,\lambda'_2)$.
\end{itemize}
\end{obs}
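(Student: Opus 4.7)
The statement is essentially a corollary of the Morita-equivalence formalism of \S\ref{subsec:intro3} combined with Proposition \ref{prop:c3c6}, which explains why it is stated with \qed. My plan has three short steps.

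First, I would invoke \cite[\S 4]{GVay} (or the earlier \cite{MS}) to record that, with $G = C_3\rtimes C_6$, the braided vector space associated to the pair $(\D_3,-1)$ is realized as a Yetter--Drinfeld module $V\in\ydG$, and that its Nichols algebra $\toba(V)$ has dimension $12$. By Proposition \ref{prop:c3c6}, both $H = A_{18,\xi}$ and $H=(A_{18,\xi})^*$ satisfy $H\mor \ku G$. Hence, by the general fact recalled in \S\ref{subsec:intro3} (which in turn rests on \cite[Theorem 3.1]{ENO2}), there is a braided tensor equivalence $\Fc:\ydG\to \ydhh$, and this equivalence preserves Nichols algebras in the sense that $\Fc(\toba(V))\simeq \toba(\Fc(V))$. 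Setting $V':=\Fc(V)\neq 0$ therefore yields a non-zero YD module over $H$ with $\dim\toba(V')=12$.

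Second, the bosonization $\Ac := \toba(V')\oper H$ is a finite-dimensional Hopf algebra. Its dimension is $\dim\toba(V')\cdot\dim H = 12\cdot 18 = 216$, and its coradical is isomorphic to $H$, which is cosemisimple; consequently $\Ac$ has the dual Chevalley property. Applying this construction to $H=A_{18,\xi}$ and to $H=(A_{18,\xi})^*$ produces the two claimed Hopf algebras.

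Third, and this is the only point that deserves a comment, I would justify the word \emph{new}. By Proposition \ref{prop:c3c6}, the Hopf algebras $A_{18,\xi}$ and $(A_{18,\xi})^*$ are the only non-trivial group-theoretical Hopf algebras over $G$; in particular, neither is isomorphic to a group algebra or to the dual of a group algebra. Since the coradical of $\Ac$ is $H$, the bosonization is neither pointed nor copointed, and so lies outside the classes treated by the pointed/copointed Lifting Method in the literature. The only (very mild) obstacle is checking this last point, but it is immediate from the fact that the coradical of $\toba(V')\oper H$ equals $H$ and from the shape of $H$ described in Proposition \ref{prop:c3c6}.
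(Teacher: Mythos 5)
Your proposal does not prove the statement at hand: it proves Theorem \ref{prop:c3c6-pointed} (existence of a non-zero $V'\in\ydhh$ with $\dim\toba(V')=12$ and the dual Chevalley property of the $216$-dimensional bosonization), whereas Remark \ref{obs-lifting-c3c6} is a classification of \emph{liftings} of $\toba(V)\#A_{18,\xi}$, i.e.\ of all Hopf algebras $A$ with $\gr A\simeq \toba(V)\#A_{18,\xi}$. None of your three steps engages with the three bullet points: you never use the twist $J$, never explain why $H(\lambda_1,\lambda_2)^J$ is a lifting, why every lifting arises this way, or why the isomorphism criterion is unchanged. Also, the Morita-equivalence functor $\Fc$ you invoke is exactly the tool the paper says is \emph{not} explicit enough to control liftings (see Remark (iii) in \S\ref{subsec:intro5}, where $\toba(V)\#\ku\st$ and $\toba(V)\#\ku^{\st}$ are shown to have wildly different lifting behaviour); so an argument through a general braided equivalence cannot by itself yield the remark.

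The actual mechanism is the twisting argument from \S\ref{subsec:intro4}: by the row labelled (3.6) in the proof of Proposition \ref{prop:c3c6} together with Lemma \ref{prop:algebras gt equivalentes} \ref{item:twist}, $A_{18,\xi}\simeq(\ku G)^J$ for a twist $J\in\ku G\otimes\ku G$. Each lifting $H(\lambda_1,\lambda_2)$ from \cite[Theorem 6.2]{GVay} is pointed with group of group-likes $G$, so $J$ is a twist for it, and since twisting by $J\in\ku G\otimes\ku G$ preserves the coradical filtration, $\gr\bigl(H(\lambda_1,\lambda_2)^J\bigr)\simeq\bigl(\gr H(\lambda_1,\lambda_2)\bigr)^J\simeq\bigl(\toba(V)\#\ku G\bigr)^J\simeq\toba(V)\#A_{18,\xi}$, giving the first bullet. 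For the second, one untwists an arbitrary lifting by $J^{-1}$ to land back in the list of \cite[Theorem 6.2]{GVay}; for the third, one observes that $(\,\cdot\,)^J$ and $(\,\cdot\,)^{J^{-1}}$ are mutually inverse operations compatible with isomorphism, so the isomorphism classification of the $H(\lambda_1,\lambda_2)$ transfers verbatim. Your proposal contains none of these steps, so there is a genuine gap: the statement you established is a different (and already separately stated) result of the paper.
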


\section{Group-theoretical Hopf algebras over $G = C_5\rtimes_2 C_{20}$}\label{sec:c5c20} 

The group $G = C_5\rtimes_2 C_4$ has two Yetter-Drinfeld modules $V_j$, $j = 2,3$, with $\dim \toba(V_j) = 1280$ \cite{AG-adv}; $V_j$ has braided vector space with rack $\Q_{5, j}$ and cocycle $-1$, and $V_2 \simeq V_3^*$ in $\ydG$. 
But a group-theoretical Hopf algebra  over $C_5\rtimes_2 C_4\simeq C_5\rtimes_3 C_4$ is trivial. 
Indeed,   a non-trivial subgroup of $C_5\rtimes_2 C_4$ does not admit a non-degenerated 2-cocycle,  (alternatively,  there is no triangular Hopf algebra of dimension 20 \cite{Gelaki, Natale-pq2}). Thus such  a group-theoretical Hopf algebra would be an abelian extension, hence trivial.

However the braided vector spaces $V_j$, $j =2$ or 3, can be realized as  Yetter-Drinfeld modules over $C_5\rtimes_2 C_{4m} \simeq C_5\rtimes_3 C_{4m} $, $m\in \mathbb{N}$ \cite[\S 4]{GVay}. Here we deal with group-theoretical Hopf algebras over  $C_5\rtimes_2 C_{20}$. 

\begin{prop} The non-trivial group-theoretical Hopf algebras over $G = C_5\rtimes_2 C_{20}$  are given by the group-theoretical data in Table \ref{tab:gp-th-data-cccv}.
\end{prop}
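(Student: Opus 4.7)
The proof follows the template of Proposition \ref{prop:c3c6}: I would list the group-theoretical data for $G$ up to the equivalence in Lemma \ref{prop:algebras gt equivalentes} \ref{item:iso}, discard those yielding commutative or cocommutative Hopf algebras, and finally sort the rest into isomorphism classes. Throughout, write $G = \langle a, c \mid a^5 = c^{20} = 1, \ cac^{-1} = a^2\rangle$ of order $100$.

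The first step is to constrain $F \cap \Gamma$. Since $|G| = 100$, non-degeneracy forces $|F \cap \Gamma|$ to be a square, so $|F \cap \Gamma| \in \{1,4,25,100\}$. The Sylow $2$-subgroup $\langle c^5 \rangle \simeq C_4$ is cyclic and therefore supports no non-degenerate cocycle, eliminating order $4$. The normal Sylow $5$-subgroup $N := \langle a, c^4 \rangle \simeq C_5 \times C_5$ has $H^2(N, \ku^\times) \simeq C_5$, with every non-trivial class non-degenerate. The group $H^2(G, \ku^\times)$ and its restriction to each subgroup of interest I would compute with \gap and the package \anupq, as announced in \S\ref{subsec:intro5}.

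Next I would enumerate, up to swap and the $\Aut G$-action, the pairs $(F, \Gamma)$ of subgroups with $G = F\Gamma$ and $|F \cap \Gamma| \in \{1, 25, 100\}$. Normality of $N$ and the fact that $G/N \simeq C_4$ restrict the factorizations sharply: each factor meets $N$ in an $\langle c \rangle$-stable subgroup of $N$, and the $2$-Sylow parts must combine to cover $\langle c^5 \rangle$ in a controlled way. For each resulting pair I would parametrize $(\alpha, \beta) \in H^2(F, \ku^\times) \times H^2(\Gamma, \ku^\times)$ subject to non-degeneracy on $F \cap \Gamma$, using Remark \ref{obs:aux-opext} to normalize one of the cocycles via a suitable restriction of a class in $H^2(G, \ku^\times)$.

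Triviality of $H = \gth{G}{F}{\Gamma}{\alpha}{\beta}$ is then tested via Lemma \ref{prop:algebras gt equivalentes} \ref{item:cocommutative}. The key computation is the $\ad G$-action on $H^2(N, \ku^\times) \simeq C_5$: in the basis $(a, c^4)$ of $N$ the generator $c$ acts by $\operatorname{diag}(2,1)$, hence it acts on $\Lambda^2 N$, and therefore on $H^2(N, \ku^\times)$, by multiplication by $2$. Consequently $H^2(N, \ku^\times)^{\ad G} = 1$, so any non-trivial class on a factor isomorphic to $N$ produces a non-cocommutative algebra, and dually for $\Gamma$. Finally, to recognise surviving data up to isomorphism, I would use Lemma \ref{prop:algebras gt equivalentes} \ref{item:duals} to pair duals and compute $G(H)$ as an invariant via Lemma \ref{lema:aux-G(H)}, then write the outcome into Table \ref{tab:gp-th-data-cccv}.

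The main obstacle is the combinatorial bookkeeping in the second step. Unlike for $\s_3$ or $C_3 \rtimes C_6$, the subgroup $N$ contains six lines of order $5$ and many pairs of factors of types $(|F|, |\Gamma|) \in \{(5,20),\,(10,10),\,(25,4),\,(50,2),\,(25,20),\,(100,100)\}$ must be considered, with their $\Aut G$-orbits sorted out, before the cocycle conditions and Lemma \ref{prop:algebras gt equivalentes} trim the list to the entries appearing in Table \ref{tab:gp-th-data-cccv}.
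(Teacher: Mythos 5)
Your proposal follows the paper's proof essentially verbatim: restrict $F\cap\Gamma$ to $1$ or the Sylow $5$-subgroup $N\simeq C_5\times C_5$ (the only non-trivial subgroup admitting a non-degenerate class, the Sylow $2$-subgroup being cyclic), enumerate the factorizations up to the action of Lemma \ref{prop:algebras gt equivalentes} \ref{item:iso}, detect non-triviality via $H^2(N,\ku^\times)^{\ad G}=1$ and Lemma \ref{prop:algebras gt equivalentes} \ref{item:cocommutative}, and organize the survivors by duality and $G(H)$ --- your explicit determinant computation $\diag(2,1)$ acting by $2$ on $\Lambda^2 N$ is in fact more detailed than the paper's bare assertion. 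The only blemish is the illustrative list of orders $(|F|,|\Gamma|)$ in your final paragraph, which includes $(25,20)$ (impossible, since it forces $|F\cap\Gamma|=5$, not a square) and omits $(25,100)$, i.e.\ the pair $(N,G)$ producing the twist entries \ref{sec:c5c20}.\ref{cccv-Twist} and \ref{sec:c5c20}.\ref{cccv-Twist-dual}; your stated procedure of enumerating all pairs with $|F\cap\Gamma|\in\{1,25,100\}$ would nonetheless recover that case.
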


\begin{question}
Is it true that \ref{sec:c5c20}.\ref{cccv-Uno} $\simeq$ \ref{sec:c5c20}.\ref{cccv-Cuatro-dual}
$\simeq$ \ref{sec:c5c20}.\ref{cccv-Cinco-dual} $\simeq$ \ref{sec:c5c20}.\ref{cccv-Twist-dual}?
\end{question}

\newcounter{itmcccv}
\renewcommand{\theitmcccv}{\alph{itmcccv}}
\newcommand{\itemcccv}[1]{\refstepcounter{itmcccv}\ref{sec:c5c20}.\theitmcccv\label{#1}}

\begin{table}[h]
	\begin{center}
		\caption{Group-theoretical data for $C_5\rtimes_2 C_{20}$}\label{tab:gp-th-data-cccv}
		
		\begin{tabular}{|c|c|c|c|c|c|}
			\hline     $\#$ & $F$ &  $\alpha$ & $\Gamma$ & $\beta$ &  $G(H)$
			\\ \hline
			\itemcccv{cccv-Uno} &  $\langle x\rangle\simeq C_4 $ & 1 & $N\simeq C_{5} \times C_5$ & $\neq 1$ & $C_{20}$
				\\ \hline
		\itemcccv{cccv-Uno-dual} &   $N$ & $\neq 1$ & $\langle x\rangle$ & 1 &  $C_5\times C_5$
            \\ \hline
            \itemcccv{cccv-Cuatro} &  $ \langle ab\rangle\simeq C_5 $ & 1 & $\langle b, x\rangle\simeq C_{5} \rtimes C_4$ & 1 & $C_5\times C_5$
				\\ \hline
		\itemcccv{cccv-Cuatro-dual} & $ \langle b, x\rangle$ & 1  & $ \langle ab\rangle$ & 1 &  $C_{20}$
            \\ \hline
            \itemcccv{cccv-Cinco} &  $\langle ab\rangle\simeq C_5 $ & 1 & $\langle x^3a^2\rangle\simeq C_{20}$ & 1 & $C_5\times C_5$
				\\ \hline
		\itemcccv{cccv-Cinco-dual} & $\langle x^3a^2\rangle$ &  1 & $\langle ab\rangle$ & 1 &  $C_{20}$
            \\ \hline
            \itemcccv{cccv-Twist} &  $N\simeq C_5\times C_5 $ & $\neq 1$ &  $G$ & 1 & $C_5\times C_5$
			\\ \hline
		\itemcccv{cccv-Twist-dual} & $G$ &  1 & $N$ & $\neq 1$ &  $C_{20}$
			   \\ \hline
		\end{tabular}
		
	\end{center}
\end{table}

\pf Let  $G =  \langle a,b,x \rangle$ , where $|a| = |b| = 5$, $|x| = 4$, $a$ is central and $xbx^{-1} = b^2$. Then 
$N = \langle a,b \rangle \simeq  C_5\times C_{5}$  is a normal 5-Sylow subgroup. Since $G$ has no subgroup isomorphic to $C_2\times C_2$, the unique non-trivial subgroup with a non-degenerate 2-cocycle is  $N$. 
Let $(F,\alpha, \Gamma, \beta)$ be a group-theoretical data for $G$; then $F \cap \Gamma$ is either 1 or  $N$.

\noindent\emph{Case 1. } $F \cap \Gamma = 1$.  Up to conjugation, either $(F, \Gamma)$ or $(\Gamma, F)$ is   one of

\begin{minipage}{.3\textwidth}
\begin{align}
\label{c5c20-uno}
& (\langle x\rangle, N)
\\ \label{c5c20-dos}
&  (\langle b\rangle, \langle x^3a^2\rangle) 
 \end{align}
 \end{minipage}
 \begin{minipage}{.3\textwidth}
\begin{align}
 \label{c5c20-tres}
 &  ( \langle a\rangle, \langle b, x\rangle) 
\\ \label{c5c20-cuatro}
 &  ( \langle ab\rangle, \langle b, x\rangle) 
\end{align}
\end{minipage}
\begin{minipage}{.3\textwidth}
\begin{align}
\label{c5c20-cinco}
 &  ( \langle ab\rangle, \langle x^3a^2\rangle) 
\end{align}
\end{minipage}

 If $(F, \Gamma)$ is as in \eqref{c5c20-uno}, $F \ntriangleleft G$, $H^2(\Gamma, \ku^\times)^{\ad G}=1$ and $H^2(\Gamma, \ku^\times)\simeq C_5$, then $\gth{G}{F}{\Gamma}{1}{\beta}$, $\beta\neq 1$, is non-trivial. Moreover, these cocycles give rise to isomorphic Hopf algebras, denoted by \ref{sec:c5c20}.\ref{cccv-Uno}. If $(F, \Gamma)$ is as in \eqref{c5c20-dos}, \eqref{c5c20-tres}, $F\lhd G$ hence $\gth{G}{F}{\Gamma}{\alpha}{\beta}$  is cocommutative, cf. Lemma \ref{prop:algebras gt equivalentes} \ref{item:cocommutative}. If $(F, \Gamma)$ is as in \eqref{c5c20-cuatro}, $F \ntriangleleft G$ and $\Gamma$ is non-abelian, then $\gth{G}{F}{\Gamma}{\alpha}{\beta}$ is non-trivial, giving  \ref{sec:c5c20}.\ref{cccv-Cuatro}. If $(F, \Gamma)$ is as in  \eqref{c5c20-cinco}, $F, \Gamma\ntriangleleft G$ hence $\gth{G}{F}{\Gamma}{\alpha}{\beta}$  is non-trivial, thus \ref{sec:c5c20}.\ref{cccv-Cinco}.

  \noindent\emph{Case 2. } $F \cap \Gamma= N$. Up to conjugation, either $(F, \Gamma)$ or $(\Gamma, F)$ is $(N, G)$.  Since $H^2(\Gamma, \ku^\times)^{\ad G}=1$ and $H^2(\Gamma, \ku^\times)\simeq C_5$,  $\gth{G}{F}{\Gamma}{1}{\beta}$, $\beta\neq 1$, is non-trivial. Moreover, these cocycles give isomorphic Hopf algebras, i.e. \ref{sec:c5c20}.\ref{cccv-Twist}. 
\epf

Observe that  \ref{sec:c5c20}.\ref{cccv-Twist}  is a twisting of $\ku G$.

\begin{theorem}\label{prop:c5c20-pointed}	
The Hopf algebras from Table \ref{tab:gp-th-data-cccv}  have two dual non-zero Yetter-Drinfeld module $V$ with $\dim \toba(V)  = 1280$. 
By bosonization, we get new  Hopf algebras with the dual Chevalley property of dimension 128000. 
\qed
\end{theorem}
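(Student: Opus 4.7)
The plan is to derive the theorem as an immediate consequence of the Morita-equivalence machinery recalled in \S\ref{subsec:intro3}. For each of the eight Hopf algebras $H$ appearing in Table \ref{tab:gp-th-data-cccv}, we have by construction $H \mor \ku G$ with $G = C_5\rtimes_2 C_{20}$. As explained in \S\ref{subsec:intro3}, this Morita equivalence produces a braided tensor equivalence $\Fc:\ydG \to \ydhh$ which intertwines Nichols algebras, in the sense that $\Fc(\toba(V)) \simeq \toba(\Fc(V))$ as algebras and coalgebras. Since $\Fc$ is in particular a tensor equivalence, it preserves duals and sends non-zero objects to non-zero objects.

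First I would realize the braided vector spaces of rack type $(\Q_{5, 2}, -1)$ and $(\Q_{5, 3}, -1)$ as Yetter-Drinfeld modules $V_2, V_3 \in \ydG$, following the procedure of \cite[\S 4]{GVay} that enlarges the realizing group from $C_5\rtimes_2 C_4$ to $C_5\rtimes_2 C_{4m}$, here with $m = 5$. By \cite{AG-adv} we have $\dim \toba(V_j) = 1280$ for $j = 2,3$, and the two Nichols algebras are linked by the duality $V_2 \simeq V_3^*$ in $\ydG$. Applying $\Fc$, I obtain two non-zero Yetter-Drinfeld modules $W_j := \Fc(V_j) \in \ydhh$ with $\dim \toba(W_j) = 1280$ and $W_2 \simeq W_3^*$ in $\ydhh$.

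Finally I would form the bosonizations $\Ac(W_j) = \toba(W_j) \# H$. Each is a finite-dimensional Hopf algebra with coradical $\Ac(W_j)_0 \simeq H$ (which is semisimple), so $\Ac(W_j)$ has the dual Chevalley property, and
\begin{align*}
\dim \Ac(W_j) \;=\; \dim \toba(W_j) \cdot \dim H \;=\; 1280 \cdot 100 \;=\; 128000.
\end{align*}
The only point that genuinely requires attention is the claim of novelty: for the non-trivial rows of Table \ref{tab:gp-th-data-cccv}, $H$ is neither isomorphic to $\ku G'$ nor to $\ku^{G'}$ for any finite group $G'$ (by the classification of group-theoretical Hopf algebras over $G$ already carried out, together with Lemma \ref{prop:algebras gt equivalentes} \ref{item:cocommutative} applied to the list), so the bosonizations $\Ac(W_j)$ lie outside the pointed and copointed classes. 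This is the main conceptual step; everything else is a direct application of the transfer principle of \S\ref{subsec:intro3}.
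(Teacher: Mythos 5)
Your proposal is correct and follows exactly the route the paper intends: the theorem is stated with \qed because it is an immediate consequence of the transfer principle of \S\ref{subsec:intro3} (the braided equivalence $\Fc:\ydG\to\ydhh$ preserving Nichols algebras and duals) applied to the realization of $(\Q_{5,2},-1)$ and $(\Q_{5,3},-1)$ over $G=C_5\rtimes_2 C_{20}$ from \cite[\S 4]{GVay}, followed by bosonization and the count $1280\cdot 100=128000$. Your additional remark on novelty (that the non-trivial $H$ in Table \ref{tab:gp-th-data-cccv} are neither group algebras nor their duals, so the bosonizations are neither pointed nor copointed) is exactly the point the paper relies on implicitly.
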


The liftings of $\toba(V) \# \ku G$, where $V$ is as above, are classified in \cite[Theorem 6.4, Theorem 6.5]{GVay}. Indeed, let $(x_j)_{0\leqslant j \leqslant 4}$ be the basis of $V$ as in \emph{loc. cit.} Let
\begin{align*}
\Ss= \{(\lambda_1, \lambda_2, \lambda_3) \in \ku^2 \text{ satisfying  \cite[(29), (33)]{GVay}}\}.
\end{align*}
For $(\lambda_1, \lambda_2, \lambda_3) \in \Ss$,  let $H(\lambda_1,\lambda_2, \lambda_3)$ be 
$T(V)\ \# \ku G$ modulo the ideal generated by
\begin{align*}
 x^2_0 &- \lambda_1 (1 - g^2_0), &  & x_0x_1 + x_2x_0 + x_3x_2 +x_1x_3- \lambda_2(1 - g_0 g_1) & & \text{and}
\end{align*}
$$x_1x_0x_1x_0 + x_0x_1x_0x_1 - s_X - \lambda_3(1 - g_0^2 g_1 g_2), \,\, \text{for}$$ 
$$s_X = \lambda_2(x_1x_0 + x_0x_1) + \lambda_1 g_1^2(x_3x_0 + x_2x_3) - \lambda_1 g_0^2(x_2x_4 + x_1x_2) + \lambda_2\lambda_1 g_0^2(1 - g_1 g_2);$$ or by 
\begin{align*}
    x^2_0 & -\lambda_1(1 - g_0^2), &  & x_1x_0 +x_0x_2 +x_2x_3 +x_3x_1 -\lambda_2(1 - g_0g_1) & & \text{and}
\end{align*}
$$x_0x_2x_3x_1 + x_1x_4x_3x_0 - s^\prime_X - \lambda_3 (1 - g_0^2 g_1 g_3), \,\, \text{for}$$
 $$s^\prime_X = \lambda_2(x_0x_1 + x_1x_0) - \lambda_1 g_1^2(x_3x_2 + x_0x_3) - \lambda_1 g_0^2(x_3x_4 + x_1x_3) + \lambda_1\lambda_2(g_1^2 + g_0^2 - 2g_0^2 g_1 g_3).$$

Then
\begin{itemize}[leftmargin=*]
\item $H(\lambda_1,\lambda_2, \lambda_3)$ is a lifting of 
$\toba(V) \# \ku G$,

\item any lifting of $\toba(V)\ \# \ku G$ is $\simeq$ to $H(\lambda_1,\lambda_2, \lambda_3)$ for some $(\lambda_1, \lambda_2, \lambda_3) \in \Ss$,

\item $H(\lambda_1,\lambda_2, \lambda_3) \simeq H(\lambda'_1,\lambda'_2, \lambda'_3)$ iff there exists $\mu \in \ku^{\times}$ such that $(\lambda_1,\lambda_2, \lambda_3) = \mu (\lambda'_1,\lambda'_2, \lambda'_3)$.
\end{itemize}

\begin{obs}\label{obs-lifting-c5c20} Let $H$ be the Hopf algebra corresponding to \ref{sec:c5c20}.\ref{cccv-Twist}. The classification of all liftings of $\toba(V) \# H$ follows from \cite[Th. 6.4, Th. 6.5]{GVay}. Namely, let $J \in \ku G \otimes \ku G$ such that $H\simeq (\ku G)^J$. Then
\begin{itemize}[leftmargin=*]
\item $H(\lambda_1,\lambda_2, \lambda_3)^J$ is a lifting of 
$\toba(V)\ \# H$, for every $(\lambda_1, \lambda_2, \lambda_3) \in \Ss$,

\item any lifting of $\toba(V)\ \# H$ is $\simeq$ to $H(\lambda_1,\lambda_2, \lambda_3)^J$ for some $(\lambda_1, \lambda_2, \lambda_3) \in \Ss$,

\item $H(\lambda_1,\lambda_2, \lambda_3)^J \simeq H(\lambda'_1,\lambda'_2, \lambda'_3)^J$ iff there is $\mu \in \ku^{\times}$ such that $(\lambda_1,\lambda_2, \lambda_3) = \mu (\lambda'_1,\lambda'_2, \lambda'_3)$.
\end{itemize}
\end{obs}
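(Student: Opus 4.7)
The plan is to transfer the classification of liftings of $\toba(V)\#\ku G$ established in \cite[Theorem 6.4, Theorem 6.5]{GVay} to the twisted setting via the operation $A \mapsto A^J$. The key principle is that for any invertible twist $J\in \ku G\otimes \ku G$, twisting by $J$ is a bijection on isomorphism classes, with inverse $(-)^{J^{-1}}$, from finite-dimensional Hopf algebras $A$ containing $\ku G$ as a Hopf subalgebra onto finite-dimensional Hopf algebras $A^J$ containing $H = (\ku G)^J$ as a Hopf subalgebra. Moreover, this operation preserves the coradical and the coradical filtration on the underlying coalgebra, and so it commutes with passage to the associated graded in the sense that $\gr(A^J) \simeq (\gr A)^J$.

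First, I would verify that the twist functor sends bosonizations to bosonizations. Applied to $\toba(V)\#\ku G$, the Hopf algebra $(\toba(V)\#\ku G)^J$ is again of the form $\toba(V')\#H$, where $V' = \Fc(V)$ for the braided tensor equivalence $\Fc:\ydG\to \ydhh$ induced by the twist; recall from \S\ref{subsec:intro3} that $\Fc$ preserves Nichols algebras. This is precisely the object that the remark denotes, by abuse of notation, $\toba(V)\#H$. Consequently, the set of isomorphism classes of liftings of $\toba(V)\#H$ is in bijection with the set of isomorphism classes of liftings of $\toba(V)\#\ku G$, via $A\mapsto A^J$.

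Plugging in \cite[Theorem 6.4, Theorem 6.5]{GVay}, every lifting of $\toba(V)\#\ku G$ is isomorphic to $H(\lambda_1,\lambda_2,\lambda_3)$ for some $(\lambda_1,\lambda_2,\lambda_3)\in \Ss$, and two such are isomorphic iff the triples agree up to a common nonzero scalar in $\ku^\times$. Transporting this statement through the bijection $A\mapsto A^J$ yields the first two bullets of the remark; the third follows from the fact that $(-)^J$ is injective on isomorphism classes, so $H(\lambda_1,\lambda_2,\lambda_3)^J \simeq H(\lambda'_1,\lambda'_2,\lambda'_3)^J$ iff $H(\lambda_1,\lambda_2,\lambda_3) \simeq H(\lambda'_1,\lambda'_2,\lambda'_3)$.

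The main technical point to spell out carefully is that the twist functor really does induce a bijection on isomorphism classes that respects the coradical filtration, and that it intertwines bosonization with the corresponding braided equivalence of Yetter-Drinfeld categories; both facts are standard in the theory of twisting. No further rack-theoretic or cohomological computation is needed beyond those already used in \cite{GVay} to parametrize liftings over $\ku G$ and the identification of $H$ as a twist of $\ku G$ recorded in the table preceding the remark.
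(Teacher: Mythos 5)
Your argument is correct and is exactly the one the paper intends: the remark is stated without proof, asserted to ``follow from'' the classification over $\ku G$ in \cite{GVay}, and the implicit mechanism is precisely the transfer along $A\mapsto A^J$ that you spell out (preservation of the coradical filtration since $J\in \ku G\otimes\ku G$, compatibility $\gr(A^J)\simeq(\gr A)^J$, and the identification of $(\toba(V)\#\ku G)^J$ with $\toba(\Fc(V))\# H$). Your writeup is a faithful expansion of the paper's (unwritten) argument, with the only point deserving extra care being the claimed injectivity of $(-)^J$ on isomorphism classes, which the paper likewise takes for granted.
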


\section{Group-theoretical Hopf algebras over $\mathbb{S}_4$}\label{sec:S4}
The classification of the \fd{} pointed Hopf algebras over $\sk$ was completed in \cite{GG}; there are exactly three non-zero Yetter-Drinfeld modules over $\ku\sk$ whose Nichols algebra is finite-dimensional  and all admit non-trivial deformations. The underlying rack and cocycle are $(\Oc^4_2,   -1)$, $(\Oc^4_2, \chi)$ or $(\Oc^4_4, -1)$.
Here we deal with the group-theoretical Hopf algebras over $G= \mathbb{S}_4$;  since   $\Out \mathbb{S}_4 = 1$,  we need to describe  all group-theoretical data for $G$ up to conjugacy,
cf.  Lemma \ref{prop:algebras gt equivalentes}.

\begin{prop}\label{prop:S4} The classification of the non-trivial group-theoretical Hopf algebras over $\mathbb{S}_4$ is given by the group-theoretical data in Table \ref{tab:gp-th-data-sk}.
\end{prop}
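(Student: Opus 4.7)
The plan is to adapt the strategy of Proposition \ref{prop:c3c6} and its counterpart in \S\ref{sec:c5c20}. \emph{Step one:} identify the subgroups $S < \mathbb{S}_4$ carrying a non-degenerate $2$-cocycle. By \S\ref{subsec:twistings-of-groups}, $\vert S\vert$ must be a square and $S$ solvable, so $\vert S\vert\in\{1,4\}$. Among order-$4$ subgroups, $C_4$ has trivial $H^2$, while a Klein four has $H^2\simeq C_2$ with unique non-degenerate class. Hence the relevant subgroups are the trivial one, the normal Klein four $V_4=\{e,(12)(34),(13)(24),(14)(23)\}$, and the three conjugate non-normal Klein fours, represented by $V'_4=\langle (12),(34)\rangle$, whose normaliser in $\mathbb{S}_4$ is the $2$-Sylow $D_4$.

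\emph{Step two:} since the non-degeneracy in Definition \ref{def:gp-th} forces $F\cap\Gamma$ to be one of the above, I split into cases $F\cap\Gamma\in\{1,V_4,V'_4\}$ (the last up to $\mathbb{S}_4$-conjugation). Using $\Out\mathbb{S}_4=1$ and Lemma \ref{prop:algebras gt equivalentes}~\ref{item:iso}, it suffices to enumerate pairs $(F,\Gamma)$ with $F\Gamma=\mathbb{S}_4$ up to simultaneous $\mathbb{S}_4$-conjugation and a single global twist coming from $H^2(\mathbb{S}_4,\ku^\times)\simeq C_2$. For $F\cap\Gamma=1$ one needs $\vert F\vert\vert\Gamma\vert=24$, with admissible subgroup orders restricted by the subgroup lattice of $\mathbb{S}_4$ (no element of order $6$, no $C_{12}$); for $F\cap\Gamma\in\{V_4,V'_4\}$ one has $\vert F\vert\vert\Gamma\vert=96$, with $F,\Gamma$ confined to the short chains above the intersection.

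\emph{Step three:} for each surviving pair I determine whether the datum produces a trivial Hopf algebra. By Lemma \ref{prop:algebras gt equivalentes}~\ref{item:cocommutative}, if $F\lhd\mathbb{S}_4$ is abelian and $\alpha\in H^2(F,\ku^\times)^{\ad\mathbb{S}_4}$, then $H$ is cocommutative, hence isomorphic to $\ku\mathbb{S}_4$ by the Morita-rigidity of $\mathbb{S}_4$; the dual statement handles commutativity when $\Gamma$ plays the role of $F$. A key sub-computation is that $\mathbb{S}_3\simeq\mathbb{S}_4/V_4$ acts trivially on $H^2(V_4,\ku^\times)\simeq C_2$ (a unique non-trivial class), so \emph{every} datum with $F=V_4$ (resp.\ $\Gamma=V_4$) is trivial. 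The non-trivial data therefore must involve either $V'_4$ or factorizations in which $V_4$ is not a factor. Residual freedom in the cocycles is then reduced using Remark \ref{obs:aux-opext} and the known restriction map from $H^2(\mathbb{S}_4,\ku^\times)\simeq C_2$.

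\emph{Step four:} for the remaining data I compute $G(H)$ via Lemma \ref{lema:aux-G(H)} and $G(H^*)$ by duality (Lemma \ref{prop:algebras gt equivalentes}~\ref{item:duals}), and use these invariants—together with Lemma \ref{lema:aux-twist} where needed—to separate isomorphism classes and populate Table \ref{tab:gp-th-data-sk}. The main obstacle is the combinatorial size of the subgroup lattice of $\mathbb{S}_4$: the case $F\cap\Gamma=V'_4$ has no analogue in \S\ref{sec:c3c6} or \S\ref{sec:c5c20} and demands a separate analysis of the $D_4=N_{\mathbb{S}_4}(V'_4)$-action on $H^2(V'_4,\ku^\times)$, together with a careful bookkeeping of equivalent data arising from outer twists. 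Whenever $G(H)$ and $G(H^*)$ fail to distinguish two candidates, finer categorical invariants—or an explicit description via the Kac sequence for the underlying abelian extension (Lemma \ref{prop:algebras gt equivalentes}~\ref{item:abext})—will have to be invoked.
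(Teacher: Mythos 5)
Your Steps one--three reproduce the paper's enumeration faithfully: the only subgroups of $\sk$ admitting a non-degenerate $2$-cocycle are the Klein four-groups, so $F\cap\Gamma$ is trivial or a Klein four; the exact factorizations up to conjugacy are $(C_2,\ac)$, $(C_3,D_4)$, $(C_4,\st)$, $(V_4,\st)$ and their transposes; the data with intersection a Klein four are $(\langle(12),(34)\rangle,\sk)$, $(V_4,\sk)$, $(D_4,\ac)$ and transposes; and every datum whose abelian factor is the normal Klein four $V_4$ is cocommutative (hence trivial by Morita-rigidity of $\sk$), exactly as in the paper. Your reduction of the cocycle freedom via Remark \ref{obs:aux-opext} is also the paper's move.

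The genuine gap is in Step four. After discarding the trivial data you are left with five non-trivial data (plus duals), yet Table \ref{tab:gp-th-data-sk} has only four entries: the statement you are proving asserts that $(C_2,\ac)$, $(D_4,C_3)$ and $(D_4,\ac,\alpha,1)$ give \emph{one} isomorphism class, and that $(C_4,\st)$ and the twist $(\langle(12),(34)\rangle,\sk,\alpha,1)$ give another. The invariants $G(H)$ and $G(H^*)$ can only \emph{separate} classes; they cannot prove these identifications, and the Kac-sequence/abelian-extension description is of no help either, since $(C_2,\ac)$ and $(D_4,C_3)$ sit in visibly different exact sequences and $(D_4,\ac)$ is not an abelian extension at all (the intersection is non-trivial). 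The paper closes this gap with Lemma \ref{Lema twist equivalencia}: using the Nikshych--Riepel description of invertible $\vect_{\sk}$-bimodule categories supported on the normal Klein subgroup and the Galindo--Plavnik formula for the rank of $\M(F,\alpha)\boxtimes_{\vect_{\sk}}\M(\Gamma,\beta)$, it shows that the corresponding categories $\C(\sk,1;\Gamma,\beta)$ are tensor equivalent, and then invokes uniqueness of the fiber functor (respectively, uniqueness of the non-trivial twist of $\ku\sk$) to upgrade twist-equivalence to isomorphism of Hopf algebras. Without an argument of this kind your plan produces a table with too many rows and does not establish the classification as stated.
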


\newcounter{itmsk}
\renewcommand{\theitmsk}{\alph{itmsk}}
\newcommand{\itemsk}[1]{\refstepcounter{itmsk}\ref{sec:S4}.\theitmsk\label{#1}}

\begin{table}[h]
	\begin{center}
		\caption{Group-theoretical data for $\sk$}\label{tab:gp-th-data-sk}
		
		\begin{tabular}{|c|c|c|c|c|c|c|}
			\hline     $\#$ & $F$ &  $\alpha$ & $\Gamma$ & $\beta$ &  $G(H)$
			\\ \hline
\itemsk{sk-Uno} & $ \langle (34), (13)(24)\rangle \simeq  D_4$ & 1 & $\langle (243) \rangle  \simeq C_3$ & 1 &   $C_2 \times C_2$
\\ \hline
\itemsk{sk-Dos} & $\langle (243) \rangle\simeq C_3$ & 1 & $ \langle (34), (13)(24)\rangle \simeq  D_4 $ & 1 &  $\st$
\\ \hline
\itemsk{sk-Tres} & $\langle (12),(34)\rangle\simeq C_2\times C_2$ & $\neq 1$ & $\sk$ & $1$ & $D_4$ %{\small $(C_2\times C_2)\rtimes_\nu C_2$} 
\\ \hline
\itemsk{sk-Cuatro} & $\sk$ & 1  & $\langle (12),(34)\rangle$ & $\neq 1$  &   $C_2$
\\ \hline
		\end{tabular}
		
	\end{center}
\end{table}

\begin{proof} Since  $|\mathbb{S}_4|=2^3\times 3$, every non-trivial subgroup that admits a non-degenerated 2-cocycle is isomorphic to $C_2\times C_2$. %, with Id $[4,2]$. 
Let $(F,\alpha, \Gamma, \beta)$ be a group-theoretical data for $\sk$; then 
$F \cap \Gamma$ is either trivial or else $\simeq C_2\times C_2$.

\noindent\emph{Case 1. } Assume that $F \cap \Gamma = 1$, i.e. $(F, \Gamma)$
is an exact exact factorization. 
Up to conjugation, either $(F, \Gamma)$ or $(\Gamma, F)$ is one of
\begin{align}
\label{sk-uno}
(\langle (34) \rangle, \langle  (13)(24), (243) \rangle) & \simeq 
(C_2, \ac)
\\ \label{sk-tres}
(\langle (243) \rangle, \langle (34), (13)(24)\rangle) & \simeq
(C_3, D_4),
\\ \label{sk-cinco}
(\langle (1324) \rangle, \langle (34), (243)\rangle) & \simeq
(C_4, \st),
\\ \label{sk-cien}
(\langle (14)(23), (13)(24) \rangle, \langle  (34), (243)  \rangle) &\simeq (C_2\times C_2, \mathbb{S}_3).
\end{align}
 If $(F, \Gamma)$ is as in \eqref{sk-cien}, $\langle (14)(23), (13)(24) \rangle$ $\lhd$ $\mathbb{S}_4$ hence $\gth{\mathbb{S}_4}{F}{\Gamma}{\alpha}{\beta}
$  is cocommutative. If $(F, \Gamma)$ is as in the remaining cases,  $F$ is not normal in $\mathbb{S}_4$, $\mathbb{A}_4$ is non-abelian and the others $\Gamma$ are not normal, hence $\gth{\mathbb{S}_4}{F}{\Gamma}{\alpha}{\beta}$  is non-trivial, cf. Lemma \ref{prop:algebras gt equivalentes} \ref{item:cocommutative}. 
Also $\gth{\mathbb{S}_4}{F}{\Gamma}{\alpha}{\beta}\simeq \gth{\mathbb{S}_4}{F}{\Gamma}{1}{1}$ by Remark \ref{obs:aux-opext}.

\noindent\emph{Case 2. } Assume that $F \cap \Gamma \simeq C_2\times C_2$. Up to conjugation, either $(F, \Gamma)$ or $(\Gamma, F)$ is one of
\begin{align}
\label{sk-siete}
(\langle (12), (34) \rangle, G ) & \simeq (C_2 \times C_2, \sk)
\\\label{sk-sietebis}
(\langle (14)(23), (13)(24) \rangle, G ) & \simeq (C_2 \times C_2, \sk)
\\ \label{sk-nueve}
(\langle (14)(23), (34) \rangle, \langle (13)(24), (243)\rangle) & \simeq
(D_4, \ac).
\end{align}

If $(F, \Gamma)$ is as in \eqref{sk-sietebis}, $H = \gth{\mathbb{S}_4}{F}{\mathbb{S}_4}{\alpha}{1}$  is cocommutative by Lemma \ref{prop:algebras gt equivalentes} \ref{item:cocommutative};
but if it is as in \eqref{sk-siete}, then $H$ (a twist of $\ku \sk$) is non-trivial  since $F \ntriangleleft \sk$. 

We next deal with  $(F, \Gamma)$ as in \eqref{sk-nueve}. If $\alpha\in H^2(D_4,\ku^\times)\simeq C_2 \simeq H^2(\mathbb{A}_4,\ku^\times) \ni \beta$, then $\alpha_{\vert F\cap\Gamma}\cdot {\beta_{\vert F\cap\Gamma}}^{-1} \neq 1$ iff either $\alpha \neq 1$ and $\beta =1$, or vice versa. By Lemma \ref{prop:algebras gt equivalentes} \ref{item:cocommutative}, $\gth{\mathbb{S}_4}{F}{\Gamma}{\alpha}{1}$  and  $\gth{\mathbb{S}_4}{F}{\Gamma}{1}{\beta}$ are non-trivial, since $F$ and $\mathbb{A}_4$ are non-abelian. By Remark \ref{obs:aux-opext}, $\gth{\mathbb{S}_4}{F}{\Gamma}{\alpha}{1}\simeq \gth{\mathbb{S}_4}{F}{\Gamma}{1}{\beta}$. 

If $(\Gamma, F)$ is as in any of the cases \eqref{sk-uno}, \dots, \eqref{sk-nueve} above, then $\gth{\mathbb{S}_4}{\Gamma}{F}{\beta}{\alpha}$ is dual to $H = \gth{\mathbb{S}_4}{F}{\Gamma}{\alpha}{\beta}$  by Lemma \ref{prop:algebras gt equivalentes} \ref{item:duals}. In conclusion, we have the following non-trivial Hopf algebras (with a slight abuse of notation):
\begin{align*}
\eqref{sk-uno}:\, &\gth{\mathbb{S}_4}{C_2}{\ac}{1}{1}, & \eqref{sk-uno}^*:\, &\gth{\mathbb{S}_4}{\ac}{C_2}{1}{1};
\\ \eqref{sk-tres}:\,  &\gth{\mathbb{S}_4}{C_3}{D_4}{1}{1}, & \eqref{sk-tres}^*:\, &\gth{\mathbb{S}_4}{D_4}{C_3}{1}{1};
\\
\eqref{sk-cinco}:\,  &\gth{\mathbb{S}_4}{C_4}{\st}{1}{1}, & \eqref{sk-cinco}^*:\, &\gth{\mathbb{S}_4}{\st}{C_4}{1}{1};
\\
\eqref{sk-siete}:\,  &\gth{\mathbb{S}_4}{C_2 \times C_2}{\sk}{\alpha}{1}, & \eqref{sk-siete}^*:\, &\gth{\mathbb{S}_4}{\sk}{C_2 \times C_2}{1}{\beta};
\\
\eqref{sk-nueve}:\,  &\gth{\mathbb{S}_4}{D_4}{\ac}{1}{\beta}, & \eqref{sk-nueve}^*:\, &\gth{\mathbb{S}_4}{\ac}{D_4}{\alpha}{1}.
\end{align*}

 By Lemma \ref{Lema twist equivalencia} below, the Hopf algebras \eqref{sk-uno}, \eqref{sk-tres}$^*$ and \eqref{sk-nueve}  are twist-equivalent, but since $\C(\mathbb{S}_4,1;\langle (234)\rangle,1)$ admits a unique fiber functor, then all of them are isomorphic--this gives \ref{sec:S4}.\ref{sk-Uno}, with dual \ref{sec:S4}.\ref{sk-Dos}. Similarly, by Lemma \ref{Lema twist equivalencia}, the Hopf algebras \eqref{sk-cinco} and \eqref{sk-siete} are  twist-equivalent, hence isomorphic because $\ku \sk$ admits a unique non-trivial twisting--this gives \ref{sec:S4}.\ref{sk-Tres}, with dual \ref{sec:S4}.\ref{sk-Cuatro}.
The computation of the various $G(H)$ is performed via Lemma \ref{lema:aux-G(H)}; hence the Hopf algebras in Table \ref{tab:gp-th-data-sk} are not isomorphic to each other.
\end{proof}

\begin{lema}\label{Lema twist equivalencia}
\begin{enumerate}[leftmargin=*,label=\rm{(\roman*)}] 
\item\label{item:twist-equiv-i}  $\C(\mathbb{S}_4,1;\langle (34), (243) \rangle,1)\cong_\otimes \C(\mathbb{S}_4,1;\mathbb{S}_4,\alpha)\cong_\otimes \Rep{\mathbb{S}_4}$, where $\alpha \in H^2(\sk,\ku^\times)$. 

\item\label{item:twist-equiv-ii}  $\C(\mathbb{S}_4,1;\langle (234)\rangle,1)\cong_\otimes \C(\mathbb{S}_4,1;\mathbb{A}_4,\beta)$, where $\beta \in H^2(\mathbb{A}_4,\ku^\times)$.
\end{enumerate}

\end{lema}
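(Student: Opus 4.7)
The plan is to prove each asserted tensor equivalence by combining Lemma \ref{prop:algebras gt equivalentes}\ref{item:iso} (rewriting group-theoretical data via $\theta \in \Aut \sk$ and $\gamma \in H^2(\sk,\ku^\times)$) with Lemma \ref{prop:algebras gt equivalentes}\ref{item:twist} (identifying $\gth{G}{F}{G}{\alpha}{1}$ as a twist of $\ku G$). The key cohomological input will be that the restriction $H^2(\sk,\ku^\times) \to H^2(V_4,\ku^\times)$ is an isomorphism of groups of order two, where $V_4=\langle(12)(34),(13)(24)\rangle$ is the normal Klein four-subgroup, so the non-trivial class $\alpha_0\in H^2(V_4,\ku^\times)$ extends to $\sk$; complementarily, $H^2(\st,\ku^\times)=1$ forces any 2-cocycle on $\sk$ to restrict trivially to $\Sigma:=\langle (34),(243)\rangle\simeq\st$.

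To prove $\C(\sk,1;\sk,\alpha) \cong_\otimes \Rep\sk$ in (i), I would set $K=\gth{\sk}{V_4}{\sk}{\alpha_0}{1}$, which by Lemma \ref{prop:algebras gt equivalentes}\ref{item:twist} is a twist of $\ku\sk$ and hence has $\Rep K \cong_\otimes \Rep\sk$. Applying Lemma \ref{prop:algebras gt equivalentes}\ref{item:iso} with $\theta=\id$ and $\gamma=\alpha$,
\[
K\simeq \gth{\sk}{V_4}{\sk}{\alpha_0\cdot\alpha|_{V_4}}{\alpha} = \gth{\sk}{V_4}{\sk}{1}{\alpha},
\]
and the right-hand side has $\Rep$-category $\C(\sk,1;\sk,\alpha)$. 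For the equivalence $\C(\sk,1;\Sigma,1) \cong_\otimes \Rep\sk$, I would use the exact factorization $\sk = V_4\cdot\Sigma$ (with $V_4\cap\Sigma = 1$): the datum $(V_4,\alpha_0,\Sigma,1)$ yields $H = \gth{\sk}{V_4}{\Sigma}{\alpha_0}{1}$ with $\Rep H = \C(\sk,1;\Sigma,1)$. Applying Lemma \ref{prop:algebras gt equivalentes}\ref{item:iso} with the same $\gamma$ (using $\gamma|_\Sigma = 1$), one obtains $H \simeq \gth{\sk}{V_4}{\Sigma}{1}{1}$, the split abelian extension $\ku^{V_4}\#\ku\Sigma$. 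Using $\sk = V_4\rtimes\Sigma$ and the $\Sigma$-equivariant self-duality $V_4\cong\widehat{V_4}$, I would identify this split extension with $\ku\sk$, yielding $\Rep H\cong_\otimes \Rep\sk$.

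I would handle part (ii) in parallel via the exact factorization $\ac = V_4\cdot\langle(234)\rangle$ inside $\sk$. Constructing suitable group-theoretical data over $\sk$ with $\Gamma=\langle(234)\rangle$ and $\Gamma=\ac$ respectively, and using that $H^2(\sk,\ku^\times)\to H^2(\ac,\ku^\times)$ is an isomorphism of groups of order two (reflecting that the Schur multiplier of $\sk$ restricts to the binary tetrahedral cover of $\ac$), one can pick $\gamma\in H^2(\sk,\ku^\times)$ non-trivial with $\gamma|_\ac = \beta$; Lemma \ref{prop:algebras gt equivalentes}\ref{item:iso} then matches the two Hopf algebras up to isomorphism. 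The hard part will be the identification $\gth{\sk}{V_4}{\Sigma}{1}{1}\simeq \ku\sk$, which requires verifying that the matched-pair action of $\Sigma$ on $V_4$ by conjugation corresponds, via the self-duality $V_4\cong\widehat{V_4}$, to the action used to form $\sk = V_4\rtimes\Sigma$; this reduces to a direct computation with the transitive action of $\Sigma$ on the three non-identity involutions of $V_4$.
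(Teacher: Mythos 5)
Your treatment of part \ref{item:twist-equiv-i} is correct, and it is a genuinely different route from the paper's. Writing $V_4$ for the normal Klein subgroup, the restriction $H^2(\sk,\ku^\times)\to H^2(V_4,\ku^\times)$ is indeed an isomorphism onto the (non-degenerate) class, $H^2(\st,\ku^\times)=1$, and your manipulations via Lemma \ref{prop:algebras gt equivalentes} \ref{item:iso} and \ref{item:twist} are legitimate. For the identification $\gth{\sk}{V_4}{\langle (34),(243)\rangle}{1}{1}\simeq \ku\sk$ you can shortcut the matched-pair computation: by Lemma \ref{prop:algebras gt equivalentes} \ref{item:cocommutative} the algebra is cocommutative, and by Lemma \ref{lema:aux-G(H)} (with $\alpha=1$) its group of grouplikes is $\widehat{V_4}\rtimes \st$, of order $24=\dim H$, and any semidirect product of $\st$ acting faithfully on a Klein group is $\sk$. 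The paper instead obtains \ref{item:twist-equiv-i} from Lemma \ref{lema:aux-twist} and a rank count for a relative tensor product with an invertible $\vect_{\sk}$-bimodule category; your argument is more elementary and self-contained for this part.

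Part \ref{item:twist-equiv-ii}, however, cannot be proved by the mechanism you propose, and this is a genuine gap. The action of $\Aut\sk\ltimes H^2(\sk,\ku^\times)$ on group-theoretical data sends $(F,\alpha,\Gamma,\beta)$ to $(\theta(F),\,\cdot\,,\theta(\Gamma),\,\cdot\,)$, so it preserves the conjugacy class, and in particular the order, of $\Gamma$; the duality of Lemma \ref{prop:algebras gt equivalentes} \ref{item:duals} only interchanges $F$ and $\Gamma$. Since no automorphism of $\sk$ carries $\langle(234)\rangle\simeq C_3$ to $\ac$, no choice of $(\theta,\gamma)$ can match a datum with $\Gamma=\langle(234)\rangle$ to one with $\Gamma=\ac$. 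Equivalently, $\M(\langle(234)\rangle,1)$ has rank $8$ while $\M(\ac,\beta)$ has rank $2$, so they are not related by any tensor autoequivalence of $\vect_{\sk}$; the cohomological fact that $H^2(\sk,\ku^\times)\to H^2(\ac,\ku^\times)$ is an isomorphism is true but does not bridge this. The equivalence in \ref{item:twist-equiv-ii} requires a non-trivial element of the Brauer--Picard group of $\vect_{\sk}$, namely an invertible bimodule category whose underlying one-sided module category is $\M(V_4,\alpha)$ with $V_4$ a \emph{proper} normal subgroup. This is precisely what the paper's proof supplies: existence of such $\mathcal{X}$ from \cite{NB}, the criterion of Lemma \ref{lema:aux-twist} from \cite{GJ}, and the computation that $\M(V_4,\alpha)\boxtimes_{\vect_{\sk}}\M(\langle(234)\rangle,1)$ has exactly two simple objects (two double cosets with trivial stabilizers), forcing it to be some $\M(\ac,\beta)$. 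Without an input of this kind your plan for \ref{item:twist-equiv-ii} cannot be completed.
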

\begin{proof}
Let us recall some results related with invertible bimodules over pointed fusion categories and the tensor product of their module categories:

If $\mathcal{X}$ is an invertible $\vect_G$-bimodule category then as right $\vect_G$-module category $\mathcal{X}\cong \M(A,\alpha)$, where $A\lhd G$ is abelian and $\alpha \in H^2(A,\ku^\times)^{\operatorname{ad} G}$, cf \cite[Corollary 7.11]{GJ}. In the case of $G=\sk$, there  are invertible bimodule categories $\mathcal{X}$ such that as right $\vect_{\mathbb{S}_4}$-module category $\mathcal{X}=\M(N,\alpha)$, where $N$ is the Klein normal subgroup of $\sk$, cf \cite[Subsection 8.2]{NB}. 

The rank of $\M(F,\alpha)\boxtimes_{\vect_{\mathbb{S}_4}}\M(\Gamma,\beta)$ can be calculated as 
follows: Let $X:=F\backslash G$ and $Y:= G/\Gamma$ the right and left transitive $G$-sets associated. The groups $G$ acts on $X\times Y$ as $g \cdot (x,y)= (xg^{-1},gy)$. Let $\{(x_i,y_i)\}_{i\in F\backslash G/\Gamma}$ be a set of representatives of the orbits of $G$ in $X \times  Y$ (the set of $G$-orbits  is in correspondence with the $(F, \Gamma)$-double cosets), then there is a bijective correspondence between simple objects in $\M(F,\alpha)\boxtimes_{\vect_{\mathbb{S}_4}}\M(\Gamma,\beta)$ and irreducible representations  of $\ku_{\alpha_i}\operatorname{Stab}_G (x_i,y_i)$, where $\alpha_i$ are certain 2-cocycles associated with $\alpha$ and $\beta$, cf \cite[Theorem 7.15, Corollary 7.16]{GJ}.

\ref{item:twist-equiv-i} By Lemma \ref{lema:aux-twist} we have to see that there is an invertible  vec$_{\sk}$-bimodule category $\mathcal{X}$ such that
$$\mathcal{X}\boxtimes_{\vect_{\sk}}\M(\langle (34), (243) \rangle,1)\cong \M(\mathbb{S}_4,\alpha),$$  since $\C(\mathbb{S}_4,1;\mathbb{S}_4,\alpha)\cong_\otimes \Rep{\mathbb{S}_4}$ for all $\alpha \in H^2(\sk,\ku^\times)$. 

Let  $\mathcal{X}$ be an invertible bimodule categories such that as right $\vect_{\mathbb{S}_4}$-module  category $\mathcal{X}=\M(N,\alpha)$, where $N$ is the Klein normal subgroup of $\sk$. The rank of $\M(N,\alpha)\boxtimes_{\vect_{\mathbb{S}_4}}\M(\langle (34), (243) \rangle,1)$ is one, since $N$ and $\langle (34), (243) \rangle$ are an exact factorization of $\mathbb{S}_4$.

The module categories $\M(\mathbb{S}_4,\alpha)$ are characterized as the  $\vect_{\mathbb{S}_4}$-module categories of rank one. So, $\mathcal{X}\boxtimes_{\vect_{\sk}}\M(\langle (34), (243) \rangle,1)\cong \M(\mathbb{S}_4,\alpha)$.

\ref{item:twist-equiv-ii} Again, let $\mathcal{X}$ be an invertible bimodule categories such that as right $\vect_{\mathbb{S}_4}$-module  category $\mathcal{X}=\M(N,\alpha)$, where $N$ is the Klein normal subgroup of $\sk$. Then rank $\M(N,\alpha)\boxtimes_{\vect_{\mathbb{S}_4}}\M(\langle \langle (234)\rangle) \rangle,1)$ is two, since there are only two $(\langle (234)\rangle, N)$-double cosets and their stabilizers are trivial.

The module categories $\M(\mathbb{A}_4,\beta)$ are characterized as the  $\vect_{\mathbb{S}_4}$-module categories of rank two. So, $\mathcal{X}\boxtimes_{\vect_{\sk}}\M(\langle (234)\rangle,\alpha)\cong \M(\mathbb{A}_4,\beta)$.
\end{proof}

Then the Hopf algebra associated to \ref{sec:S4}.\ref{sk-Tres} is a twisting of $\ku \mathbb{S}_4$.

\begin{theorem}\label{prop:S4-pointed}	
The group-theoretical Hopf algebras in Table \ref{tab:gp-th-data-sk} have exactly 3 non-zero Yetter-Drinfeld modules  whose Nichols algebra is finite-dimensional. By bosonization, we get new  Hopf algebras with the dual Chevalley property of dimension  13824.\qed
\end{theorem}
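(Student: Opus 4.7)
The plan is to combine three ingredients: the classification of finite-dimensional Nichols algebras in $\ydg$ from \cite{GG}, the Morita equivalences collected in Proposition~\ref{prop:S4}, and the standard bosonization construction. First I would fix $H$ to be any of the non-trivial group-theoretical Hopf algebras listed in Table~\ref{tab:gp-th-data-sk}. Since $H \mor \ku \sk$ by Proposition~\ref{prop:S4}, the discussion of \S\ref{subsec:intro3} furnishes a braided tensor equivalence $\Fc: \ydg \to \ydhh$ which, crucially, preserves Nichols algebras: $\Fc(\toba(V)) \simeq \toba(\Fc(V))$ as graded braided Hopf algebras in $\ydhh$.

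Next I would transport the classification across $\Fc$. As recalled in the opening of Section~\ref{sec:S4}, the work \cite{GG} establishes that up to isomorphism there are exactly three non-zero YD modules in $\ydg$ with $\dim \toba(V)<\infty$, associated to the pairs $(\Oc^4_2,-1)$, $(\Oc^4_2,\chi)$ and $(\Oc^4_4,-1)$; Table~\ref{tab:app2} records that each has $\dim \toba(V) = 576$. Applying $\Fc$ produces precisely three non-zero objects $V'_1,V'_2,V'_3 \in \ydhh$ with $\dim \toba(V'_i)=576$ and no others, since $\Fc$ is an equivalence.

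To conclude, I would form the bosonizations $A_i := \toba(V'_i) \# H$ for $i=1,2,3$. Each is a finite-dimensional Hopf algebra of dimension $576\cdot|\sk|=576\cdot 24=13824$, whose coradical is $H$ (see \S\ref{subsec:intro2}); since $H$ is cosemisimple, $A_i$ has the dual Chevalley property. These Hopf algebras are new examples because their coradicals are the non-trivial semisimple Hopf algebras of Table~\ref{tab:gp-th-data-sk}, and in particular are neither a group algebra nor the dual of one. The only point requiring care in the whole argument is to know that the classification from \cite{GG} is exhaustive among all YD modules over $\sk$, not merely among those of rack type; but this is precisely what is asserted in the opening sentence of Section~\ref{sec:S4}, so no extra work is required beyond invoking that reference.
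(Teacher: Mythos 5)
Your proposal is correct and follows essentially the same route the paper intends: the theorem is stated with an immediate \qed{} precisely because it is the combination of the classification over $\ku\sk$ from \cite{GG} recalled at the start of Section \ref{sec:S4}, the Nichols-algebra-preserving braided equivalence $\Fc:\ydg\to\ydhh$ coming from the Morita equivalence of \S\ref{subsec:intro3} and Proposition \ref{prop:S4}, and the dimension count $576\cdot 24=13824$ for the bosonization. Your explicit handling of the ``exactly three'' claim via the bijectivity of $\Fc$ and your remark on exhaustiveness of the \cite{GG} classification are exactly the points the paper leaves implicit.
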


The liftings of $\toba(V) \# \ku \mathbb{S}_4$, where $V$ is as above, are classified in \cite[Proposition 5.3]{GG}. Indeed, for the ql-data \cite[Def. 3.5]{GG} 
\begin{itemize}
\item $\mathcal{Q}_4^{-1}[t]=(\sk , \mathcal{O}^4_2,−1,\cdot,\iota, \{0, \Lambda,\Gamma\})$,
\item $\mathcal{Q}_4^\chi[\lambda]= (\mathbb{S}_4, \mathcal{O}_2^4, \chi,\cdot, \iota, \{ 0, 0, \lambda\})$ and 
\item $\mathcal{D}[t]= ((\sk,\mathcal{O}_4^4, −1, \cdot, \iota, \{\Lambda, 0, \Gamma\}))$,
\end{itemize} where $\Lambda, \Gamma, \lambda \in \ku$, $t = (\Lambda, \Gamma)$, let $\mathcal{H}(\mathcal{Q}_4^{-1}[t])$, $\mathcal{H}(\mathcal{Q}_4^\chi[\lambda])$ and $\mathcal{H}(\mathcal{D}[t])$, respectively, be the algebras presented by generators and relations
\begin{itemize}
\item $\{a_i, H_r : i\in \mathcal{O}_4^2, r\in \mathbb{S}_4\}$,
\begin{align*}
    & H_e=1,   \,\,\, \,\,\,        H_rH_s=H_{rs},     \,\,\, \,\,\,     r, s\in \mathbb{S}_4,\\
    & H_j a_i = -a_{jij} H_j,  \,\,\, \,\,\, i, j\in \mathcal{O}^4_2,  \\
    & a^2_{(12)}=0, \\
    & a_{(12)}a_{(34)} + a_{(34)}a_{(12)}  = \Gamma(1 - H_{(12)}H_{(34)}), \\
    & a_{(12)}a_{(23)} + a_{(23)}a_{(13)} + a_{(13)}a_{(12)} = \Lambda(1 - H_{(12)}H_{(23)}), 
    \end{align*} 
\item $\{a_i, H_r : i \in \mathcal{O}^4_2, r \in \mathbb{S}_4\}$,
\begin{align*}
    & H_e = 1, \,\,\, \,\,\,   H_rH_s = H_{rs}, \,\,\, \,\,\,   r, s \in \sk,\\
    & H_j a_i = \chi_i(j)a_{jij}H_j, \,\,\, \,\,\,   i, j \in \mathcal{O}^4_2,\\
    & a^2_{(1 2)} = 0,\\
    & a_{(12)}a_{(34)} - a_{(34)}a_{(12)} = 0,\\
    & a_{(12)}a_{(23)} - a_{(23)}a_{(13)} - a_{(13)}a_{(12)} = \lambda (1 - H_{(12)}H_{(23)});
\end{align*} 
\item $\{a_i, H_r : i \in \mathcal{O}_4^4, r\in \sk\}$,
\begin{align*}
& H_e=1, \,\,\, \,\,\, H_r H_s = H_{rs}, \,\,\, \,\,\, r,s \in \sk,\\
& H_j a_i = -a_{jij} H_j, \,\,\, \,\,\, i \in \mathcal{O}_4^4, j \in \mathcal{O}_2^4,\\
&a^2_{(1234)} = \Gamma (1 - H_{(13)}H_{(24)}),\\
& a_{(1234)} a_{(1432)} + a_{(1432)}a_{(1234)} = 0, \\
& a_{(1234)}a_{(1243)} + a_{(1243)}a_{(1423)} + a_{(1423)}a_{(1234)} = \Gamma(1 - H_{(12)}H_{(13)}).
\end{align*}
\end{itemize}
Then 
\begin{itemize}
\item these algebras are liftings of $\toba(V) \# \ku \mathbb{S}_4$,
\item any lifting is isomorphic to one of these algebras,
\item $\mathcal{H}(\mathcal{Q}_4^{-1}[t])\simeq \mathcal{H}(\mathcal{Q}_4^{-1}[t^\prime])$ iff $t\neq 0$ and $t = t^\prime \in \mathbb{P}^1_\ku$ or if $t = t^\prime = (0,0)$, and the same holds for $\mathcal{H}(\mathcal{D}[t])$; $\mathcal{H}(\mathcal{Q}_4^\chi[\lambda])\simeq \mathcal{H}(\mathcal{Q}_4^\chi[1])$, $\forall\lambda \in \ku^\times$ and $\mathcal{H}(\mathcal{Q}_4^\chi[1])\not\simeq \mathcal{H}(\mathcal{Q}_4^\chi[0])$, \cite[Lemma 6.1]{GG}.
\end{itemize}

\begin{obs}\label{obs-lifting-s4} Let $H$ be the Hopf algebra corresponding to \ref{sec:S4}.\ref{sk-Tres}. The classification of all liftings of $\toba(V) \# H$ follows from \cite[Prop. 5.3]{GG}. Namely, let $J \in \ku \sk \otimes \ku \sk$ such that $H\simeq (\ku \sk)^J$. Then
\begin{itemize}[leftmargin=*]
\item $\mathcal{H}(\mathcal{Q}_4^{-1}[t])^J$, $\mathcal{H}(\mathcal{Q}_4^\chi[\lambda])^J$ and $\mathcal{H}(\mathcal{D}[t])^J$ are liftings of 
$\toba(V)\ \# H$,

\item any lifting of $\toba(V)\ \# H$ is $\simeq$ to $\mathcal{H}(\mathcal{Q}_4^{-1}[t])^J$, $\mathcal{H}(\mathcal{Q}_4^\chi[\lambda])^J$ or $\mathcal{H}(\mathcal{D}[t])^J$,

\item $\mathcal{H}(\mathcal{Q}_4^{-1}[t])^J\simeq \mathcal{H}(\mathcal{Q}_4^{-1}[t^\prime])^J$ iff $t\neq 0$ and $t = t^\prime \in \mathbb{P}^1_\ku$ or if $t = t^\prime = (0,0)$, and the same holds for $\mathcal{H}(\mathcal{D}[t])^J$; $\mathcal{H}(\mathcal{Q}_4^\chi[\lambda])^J\simeq \mathcal{H}(\mathcal{Q}_4^\chi[1])^J$, $\forall\lambda \in \ku^\times$ and $\mathcal{H}(\mathcal{Q}_4^\chi[1])^J\not\simeq \mathcal{H}(\mathcal{Q}_4^\chi[0])^J$.
\end{itemize}
\end{obs}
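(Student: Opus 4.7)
The plan is to exploit that the Hopf algebra $H$ associated to \ref{sec:S4}.\ref{sk-Tres} is a twisting of $\ku\sk$, as established right before Theorem \ref{prop:S4-pointed}: write $H\simeq (\ku\sk)^J$ for a twist $J\in \ku\sk\otimes\ku\sk$. The whole statement should then be derived from the classification \cite[Prop.~5.3]{GG} of liftings of $\toba(V)\#\ku\sk$ by transferring it along the bijection induced by twisting.

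The first step is to set up this bijection. If $A$ is any lifting of $\toba(V)\#\ku\sk$, then its coradical is $A_0\simeq \ku\sk$, so $J$ lives in $A\otimes A$ and defines a Drinfeld twist of $A$. The Hopf algebra $A^J$ shares the underlying coalgebra of $A$, so the coradical filtration and its associated graded are preserved; hence $\gr(A^J)\simeq (\gr A)^J\simeq (\toba(V)\#\ku\sk)^J$. I would then identify this last Hopf algebra with $\toba(V')\#H$, where $V'\in\ydhh$ corresponds to $V\in\ydG$ under the braided equivalence $\Fc:\ydG\to\ydhh$ coming from the Morita equivalence $\ku\sk\mor H$ (recall from \S\ref{subsec:intro3} that $\Fc$ preserves Nichols algebras, and the compatibility of bosonization with twisting gives $(\toba(V)\#\ku\sk)^J\simeq\toba(\Fc(V))\#(\ku\sk)^J$). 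This yields the first bullet: each of $\mathcal{H}(\mathcal{Q}_4^{-1}[t])^J$, $\mathcal{H}(\mathcal{Q}_4^\chi[\lambda])^J$, $\mathcal{H}(\mathcal{D}[t])^J$ is a lifting of $\toba(V)\#H$.

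For the converse, I would run the argument in reverse: $J$ is invertible in $\ku\sk\otimes\ku\sk$, and since $J^{-1}\in H\otimes H$ sits inside the coradical of any lifting $B$ of $\toba(V)\#H$, the twist $B^{J^{-1}}$ is defined and is a lifting of $(\toba(V)\#H)^{J^{-1}}\simeq \toba(V)\#\ku\sk$. By \cite[Prop.~5.3]{GG}, $B^{J^{-1}}$ is isomorphic to one of $\mathcal{H}(\mathcal{Q}_4^{-1}[t])$, $\mathcal{H}(\mathcal{Q}_4^\chi[\lambda])$ or $\mathcal{H}(\mathcal{D}[t])$; twisting back by $J$ (and using $(A^{J^{-1}})^J\simeq A$) shows $B$ is isomorphic to one of the listed twists. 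The isomorphism classification in the third bullet is then automatic: twisting by $J$ is a functorial bijection on isomorphism classes of Hopf algebras with coradical $\ku\sk$, so $A_1\simeq A_2$ iff $A_1^J\simeq A_2^J$, and the statement reduces directly to \cite[Lemma~6.1]{GG}.

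The main obstacle is the identification $(\toba(V)\#\ku\sk)^J\simeq \toba(\Fc(V))\#H$ as bosonizations, and more generally the verification that twisting by $J\in \ku\sk\otimes\ku\sk$ implements the braided equivalence $\Fc$ at the level of infinitesimal braidings of liftings. Once this compatibility is in place, together with the fact that the coradical filtration is preserved under twisting by a group-like twist, every other step of the argument is formal; the remark is then a direct consequence of \cite[Prop.~5.3]{GG} and \cite[Lemma~6.1]{GG}.
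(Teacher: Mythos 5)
Your proposal is correct and follows essentially the same route as the paper: the paper states this remark without proof precisely because the twisting mechanism you describe (the twist $J$ lies in the coradical of any lifting, twisting preserves the coradical filtration and commutes with $\gr$ and with bosonization, and $A\mapsto A^J$ induces a bijection on isomorphism classes of liftings) is the strategy already laid out in \S\ref{subsec:intro3} and invoked identically in Remarks \ref{obs-lifting-c3c6}, \ref{obs-lifting-c5c20} and \ref{obs-lifting-a4c2}. The only point you flag as an obstacle, the identification $(\toba(V)\#\ku\sk)^J\simeq\toba(\Fc(V))\#H$, is exactly the compatibility the paper also takes for granted, so your write-up matches the intended argument.
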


\section{Group-theoretical Hopf algebras over  $\mathbb{S}_5$}\label{sec:S5} 

The classification of the \fd{} Nichols algebras over $\sco$ is unknown; there are two non-zero Yetter-Drinfeld modules over $\ku\sco$ with finite-dimensional Nichols algebra  \cite{FK, G-zoo, GG} and one open case \cite{AFGV-ampa}. The underlying rack and cocycles are $(\Oc^5_2$, $-1$) or ($\Oc^5_2$, $\chi$).
Here we deal with the group-theoretical Hopf algebras over $G= \mathbb{S}_5$  (up to conjugacy because $\Out G = 1$, cf.  Lemma \ref{prop:algebras gt equivalentes}).

\begin{prop}\label{prop:S5} The classification of the non-trivial group-theoretical Hopf algebras over $\mathbb{S}_5$ is given by the group-theoretical data in Table \ref{tab:gp-th-data-s5}.
\end{prop}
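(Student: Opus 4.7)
The plan is to mimic the strategy that worked for $\mathbb{S}_4$ in Proposition \ref{prop:S4}. Since $|\mathbb{S}_5| = 2^3 \cdot 3 \cdot 5$, and a non-degenerate 2-cocycle on a subgroup $S$ forces $|S|$ to be a square (by \S\ref{subsec:twistings-of-groups}), the only subgroups of $\mathbb{S}_5$ carrying a non-degenerate cocycle are the trivial group and those isomorphic to $C_2\times C_2$ (note that the Sylow 2-subgroup $D_4$ of $\mathbb{S}_5$, although of order 8, does not meet the square condition). Consequently, for every group-theoretical datum $(F, \alpha, \Gamma, \beta)$ for $\mathbb{S}_5$, the intersection $F\cap\Gamma$ is either trivial or isomorphic to $C_2\times C_2$.

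First I would, as in Proposition \ref{prop:S4}, enumerate up to conjugation all unordered pairs $(F,\Gamma)$ of subgroups of $\mathbb{S}_5$ satisfying $F\Gamma = \mathbb{S}_5$ and split them according to whether $F\cap\Gamma = 1$ (exact factorizations) or $F\cap\Gamma \simeq C_2\times C_2$. The relevant subgroups of $\mathbb{S}_5$ are $C_n$ ($n=2,3,4,5,6$), $C_2\times C_2$, $\mathbb{S}_3$, $D_4$, $D_5$, $D_6$, $\mathbb{A}_4$, $F_{20}=C_5\rtimes C_4$, $\mathbb{S}_4$, $\mathbb{A}_5$. A systematic use of index/order constraints $|F|\cdot|\Gamma| = |F\cap\Gamma|\cdot|\mathbb{S}_5|$ and inspection of maximal subgroups ($\mathbb{S}_4$, $\mathbb{A}_5$, $F_{20}$, $D_6$) will cut the list to finitely many conjugacy classes of pairs.

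Second, for each candidate $(F,\alpha,\Gamma,\beta)$ I would determine when the Hopf algebra $H=\gth{\mathbb{S}_5}{F}{\Gamma}{\alpha}{\beta}$ is trivial. By Lemma \ref{prop:algebras gt equivalentes}\ref{item:cocommutative}, $H$ is cocommutative iff $F\lhd \mathbb{S}_5$ is abelian and $\alpha$ is $\ad\mathbb{S}_5$-invariant (and similarly for $H^*$ with $\Gamma$); since the only non-trivial normal subgroup of $\mathbb{S}_5$ is $\mathbb{A}_5$, which is non-abelian, triviality can only come via the cases where either $F$ or $\Gamma$ equals $\mathbb{S}_5$ or $\mathbb{A}_5$. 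All other candidates produce non-trivial Hopf algebras. Remark \ref{obs:aux-opext} then lets me normalize the cocycles so that $\alpha=1$ or $\beta=1$ whenever one of $F$, $\Gamma$ has trivial $H^2$.

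Third, I would collapse isomorphism classes. Lemma \ref{prop:algebras gt equivalentes}\ref{item:duals} pairs each datum with its dual. Lemma \ref{prop:algebras gt equivalentes}\ref{item:twist} identifies the twists of $\ku\mathbb{S}_5$ (which correspond to the Morita-trivial fiber functor on $\Rep\mathbb{S}_5$) and Lemma \ref{lema:aux-twist} — applied with the invertible $\vect_{\mathbb{S}_5}$-bimodule categories available — would merge different data whose representation categories coincide; here I would follow the calculation in Lemma \ref{Lema twist equivalencia}, computing ranks of $\M(F,\alpha)\boxtimes_{\vect_{\mathbb{S}_5}}\M(\Gamma,\beta)$ from double-coset data. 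Finally, to show that the remaining Hopf algebras of Table \ref{tab:gp-th-data-s5} are pairwise non-isomorphic, I would compute $G(H)$ for each via Lemma \ref{lema:aux-G(H)}: the sequence $1\to\widehat{F}\to G(H)\to K\to 1$ with $K=\{\overline{g}\in N_{\mathbb{S}_5}(F)/F:[\alpha]=[\alpha^g]\}$ is an effective invariant.

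The main obstacle will be the third step: for $\mathbb{S}_5$ the supply of invertible $\vect_{\mathbb{S}_5}$-bimodule categories is more limited than for $\mathbb{S}_4$ (where the Klein four normal subgroup $N$ provided natural invertible bimodules via $\M(N,\alpha)$), because $\mathbb{S}_5$ has no normal abelian subgroup. Hence Lemma \ref{Lema twist equivalencia} does not extend verbatim; the twist-equivalences must be established either by directly exhibiting suitable Morita equivalences between the involved fusion categories, or by distinguishing the candidate Hopf algebras entirely through the invariants (dimension, $G(H)$, $G(H^*)$, number of simples of each dimension) so that at most one isomorphism class is possible per equivalence class.
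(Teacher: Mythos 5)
Your proposal follows essentially the same route as the paper: restrict $F\cap\Gamma$ to $1$ or $C_2\times C_2$ via the square-order condition, enumerate factorizations up to conjugacy, rule out triviality with Lemma \ref{prop:algebras gt equivalentes} \ref{item:cocommutative} and normalize cocycles with Remark \ref{obs:aux-opext}, then separate the survivors by $G(H)$ computed via Lemma \ref{lema:aux-G(H)}. Your worry about extending Lemma \ref{Lema twist equivalencia} turns out to be moot: unlike the $\mathbb{S}_4$ case, the paper performs no merging of distinct data for $\mathbb{S}_5$, and your proposed fallback --- distinguishing all entries by the pair $(G(H),G(H^*))$ --- is exactly what the paper does.
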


\newcounter{itmsco}
\renewcommand{\theitmsco}{\alph{itmsco}}
\newcommand{\itemsco}[1]{\refstepcounter{itmsco}\ref{sec:S5}.\theitmsco\label{#1}}

\begin{table}[h]
	\begin{center}
		\caption{Group-theoretical data for $\sco$}\label{tab:gp-th-data-s5}
		
		\begin{tabular}{|c|p{2.9cm}|c|c|c|c|c|}
			\hline     $\#$ & $F$ &  $\alpha$ & $\Gamma$ & $\beta$ &  $G(H)$
			\\ \hline
\itemsco{s5-uno}  & $\langle (45) \rangle \simeq C_2$ & 1 & $\langle (12345), (345) \rangle \simeq  \mathbb{A}_5$ & 1  & $C_2\times \mathbb{S}_3$
\\ \hline
\itemsco{s5-dos} & $\langle (12345), (345) \rangle$ & 1 & $\langle (45) \rangle $ & 1 & $ C_2 $
\\ \hline
\itemsco{s5-tres} &  $\langle (12345)\rangle \simeq C_5$ & 1 &  $\langle (345), (2435)\rangle \simeq \mathbb{S}_4$ & 1 & $C_5\rtimes C_4$
\\ \hline
\itemsco{s5-cuatro} & $\langle (345), (2435)\rangle$ & 1 & $\langle (12345)\rangle$ & 1 &  $C_2$
\\ \hline
\itemsco{s5-cinco} & {\small $\langle (345), (45) \rangle \simeq \mathbb{S}_3$} & 1 & 
{\small $\langle (12345), (2354)\rangle \simeq C_5\rtimes C_4$} &  1 & $C_2\times C_2$
\\ \hline
\itemsco{s5-seis} & $\langle (12345), (2354)\rangle $ & 1 & $\langle (345), (45) \rangle$ & 1  & $C_4 $
\\ \hline
\itemsco{s5-siete} & {\small $\langle (12)(345)\rangle\simeq C_6$} & 1 & {\small $\langle (12345), (2354)\rangle \simeq C_5\rtimes C_4$} & 1  & $D_6$
\\ \hline
\itemsco{s5-ocho} & $\langle (12345), (2354)\rangle$  &  1 & $\langle (12)(345)\rangle$ &  1  & $C_4 $
\\ \hline
\itemsco{s5-nueve} & {\small $\langle (23)(45), (24)(35)\rangle$} \newline {\small $\simeq C_2\times C_2$} & $\neq 1$ & $\mathbb{S}_5$ & 1  &  {\small $(C_2\times C_2)\rtimes_\nu\mathbb{S}_3$} 
\\ \hline
\itemsco{s5-diez}  & $\mathbb{S}_5$ & 1 & $\langle (23)(45), (24)(35)\rangle$ & $\neq 1$  & $C_2$
\\ \hline
\itemsco{s5-once} & {\small $\langle (45), (23) \rangle$\newline $\simeq C_2\times C_2$} & $\neq 1$ &  $\mathbb{S}_5$ & 1  & $D_4$ %{\small $(C_2\times C_2)\rtimes_\nu C_2$} 
\\ \hline
\itemsco{s5-doce}  & $\mathbb{S}_5$  & 1  &  $\langle (45), (23) \rangle$ & $\neq 1$   & $C_2$
\\ \hline
\itemsco{s5-trece}  & $\langle (45), (24)(35)\rangle$ \newline $\simeq D_4$ & $\neq 1$  &  $\mathbb{A}_5$ &   1 & $C_2\times C_2$ 
%\\ \hline
%\itemsco{s5-catorce}  & $\langle (45), (24)(35)\rangle$ \newline $\simeq D_4$  &  1 &  $\mathbb{A}_5$ &  $\neq 1$  & $C_2\times C_2$ ésta es isomorfa a trece via Remark
\\ \hline
\itemsco{s5-quince}  & $\mathbb{A}_5$  & 1 & $\langle (45), (24)(35)\rangle$ & $\neq 1$  & $ C_2$ 
%\\ \hline
%\itemsco{s5-dieciseis}  &  $\mathbb{A}_5$ & $\neq 1$ & $\langle (45), (24)(35)\rangle$  & 1 & $ C_2$ idem
\\ \hline
		\end{tabular}
		
	\end{center}
\end{table}

\pf Since  $|\mathbb{S}_5|=2^3\times 3\times 5$, every non-trivial subgroup that admits a non-degenerated 2-cocycle is isomorphic to $C_2\times C_2$. Let $(F,\alpha, \Gamma, \beta)$ be a group-theoretical data for $\sco$; then 
$F \cap \Gamma$ is either trivial or else $\simeq C_2\times C_2$.

 \noindent\emph{Case 1. } Assume that $F \cap \Gamma = 1$, i.e. $(F, \Gamma)$
is an exact exact factorization. Up to conjugation,  $(F,\Gamma)$  is either of \ref{sec:S5}.\ref{s5-uno}, \ref{sec:S5}.\ref{s5-tres}, \ref{sec:S5}.\ref{s5-cinco} or \ref{sec:S5}.\ref{s5-siete}, or their transposes \ref{sec:S5}.\ref{s5-dos}, \ref{sec:S5}.\ref{s5-cuatro}, \ref{sec:S5}.\ref{s5-seis} or \ref{sec:S5}.\ref{s5-ocho}. Since $\mathbb{A}_5$ is non-abelian and the others subgroups in this list are not normal, then $\gth{\mathbb{S}_5}{F}{\Gamma}{\alpha}{\beta}$  is non-trivial by Lemma \ref{prop:algebras gt equivalentes} \ref{item:cocommutative}. Also $\gth{\mathbb{S}_5}{F}{\Gamma}{\alpha}{\beta}\simeq \gth{\mathbb{S}_5}{F}{\Gamma}{1}{1}$ by Remark \ref{obs:aux-opext}.

\noindent\emph{Case 2. } Assume that $F \cap \Gamma \simeq C_2\times C_2$. Up to conjugation, either $(F, \Gamma)$ or $(\Gamma, F)$ is \ref{sec:S5}.\ref{s5-nueve}, \ref{sec:S5}.\ref{s5-once} or \ref{sec:S5}.\ref{s5-trece}. By Lemma \ref{prop:algebras gt equivalentes} \ref{item:cocommutative}, since in the first two cases $F\ntriangleleft \mathbb{S}_5$, then $\gth{\mathbb{S}_5}{F}{\mathbb{S}_5}{\alpha}{1}$ is non-trivial. We next deal with $(F, \Gamma)$ as in \ref{sec:S5}.\ref{s5-trece}. If $\alpha\in H^2(D_4,\ku^\times)\simeq C_2 \simeq H^2(\mathbb{A}_5,\ku^\times)  \ni \beta$, then $\alpha_{\vert F\cap\Gamma}\cdot {\beta_{\vert F\cap\Gamma}}^{-1} \neq 1$ iff either $\alpha \neq 1$ and $\beta =1$, or vice versa. By Lemma \ref{prop:algebras gt equivalentes} \ref{item:cocommutative}, $\gth{\mathbb{S}_5}{F}{\mathbb{A}_5}{\alpha}{1}$  and $\gth{\mathbb{S}_5}{F}{\mathbb{A}_5}{1}{\beta}$  are non-trivial, since $F$ and $\mathbb{A}_5$ are non-abelian. By Remak \ref{obs:aux-opext}, $\gth{\mathbb{S}_5}{F}{\mathbb{A}_5}{\alpha}{1} \simeq \gth{\mathbb{S}_5}{F}{\mathbb{A}_5}{1}{\beta}$.

If $(\Gamma, F)$ is as in any of the cases \ref{sec:S5}.\ref{s5-uno}, \ref{sec:S5}.\ref{s5-tres}, \ref{sec:S5}.\ref{s5-cinco}, \ref{sec:S5}.\ref{s5-siete}, \ref{sec:S5}.\ref{s5-nueve}, \ref{sec:S5}.\ref{s5-once} and \ref{sec:S5}.\ref{s5-trece}, then $\gth{\mathbb{S}_5}{\Gamma}{F}{\beta}{\alpha}$ is dual to $H = \gth{\mathbb{S}_5}{F}{\Gamma}{\alpha}{\beta}$  by Lemma \ref{prop:algebras gt equivalentes} \ref{item:duals}. The computation of the various $G(H)$ is performed via Lemma \ref{lema:aux-G(H)}; hence the Hopf algebras in Table \ref{tab:gp-th-data-s5} are not isomorphic to each other.
\epf

\begin{theorem}\label{prop:S5-pointed}	
The Hopf algebras from Table \ref{tab:gp-th-data-s5} have two non-zero Yetter-Drinfeld modules $V$ with $\dim \toba(V) < \infty$. 
By bosonization, we get new  Hopf algebras with the dual Chevalley property of dimension  995328000. \qed
\end{theorem}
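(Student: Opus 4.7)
The plan is to combine Proposition \ref{prop:S5} with the general framework recalled in \S \ref{subsec:intro3}. First, by \cite{FK,G-zoo,GG} there are two Yetter-Drinfeld modules $V_1, V_2 \in {}^{\ku\sco}_{\ku\sco}\mathcal{YD}$ whose underlying braided vector spaces are of rack type $(\Oc^5_2,-1)$ and $(\Oc^5_2,\chi)$, respectively, and for which $\dim \toba(V_i) = 8294400$; see Table \ref{tab:app2}. Now, by Proposition \ref{prop:S5}, every Hopf algebra $H$ in Table \ref{tab:gp-th-data-s5} is of the form $\gth{\mathbb{S}_5}{F}{\Gamma}{\alpha}{\beta}$ for some group-theoretical datum, and in particular $H \mor \ku \sco$.

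Second, I would invoke the fact, recalled in \S \ref{subsec:intro3}, that Morita equivalence yields a braided equivalence $\Fc : {}^{\ku\sco}_{\ku\sco}\mathcal{YD} \to \ydhh$ that preserves Nichols algebras, i.e.\ $\Fc(\toba(V_i)) \simeq \toba(\Fc(V_i))$ as algebras and coalgebras. Therefore, for each $H$ in Table \ref{tab:gp-th-data-s5} there exist two non-zero Yetter-Drinfeld modules $W_i := \Fc(V_i) \in \ydhh$, $i=1,2$, with
\begin{equation*}
\dim \toba(W_i) = \dim \toba(V_i) = 8294400 < \infty.
\end{equation*}

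Third, the bosonization $\Ac(W_i) = \toba(W_i) \# H$ is a finite-dimensional Hopf algebra with coradical $\Ac(W_i)_0 \simeq H$, hence with the dual Chevalley property since $H$ is semisimple, and of dimension
\begin{equation*}
\dim \Ac(W_i) = \dim \toba(W_i) \cdot \dim H = 8294400 \cdot 120 = 995328000,
\end{equation*}
as claimed. The novelty is ensured by Proposition \ref{prop:S5}: the Hopf algebras in Table \ref{tab:gp-th-data-s5} are pairwise non-isomorphic (distinguished through the computation of $G(H)$ via Lemma \ref{lema:aux-G(H)}) and none of them is isomorphic to $\ku\sco$ or to $\ku^{\sco}$, so the resulting bosonizations do not belong to the classes of pointed or copointed examples already in the literature.

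The only non-routine point is really the fact that these bosonizations are genuinely new examples beyond the (co)pointed setting; this however is encapsulated in the non-triviality of the group-theoretical data in Table \ref{tab:gp-th-data-s5}, which was established in Proposition \ref{prop:S5}. All other ingredients (Morita equivalence preserving Nichols algebras, dual Chevalley property of a bosonization over a cosemisimple Hopf algebra, dimension count) are standard consequences of the material recalled in \S \ref{subsec:intro2}--\S \ref{subsec:intro4}; so the statement follows without further calculation.
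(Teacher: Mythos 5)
Your proposal is correct and follows exactly the argument the paper intends: the theorem is stated with \qed precisely because it is the immediate combination of Proposition \ref{prop:S5}, the Nichols-algebra-preserving braided equivalence of Yetter--Drinfeld categories recalled in \S\ref{subsec:intro3}, and the dimension count $8294400\cdot 120=995328000$ for the bosonization. Nothing is missing and nothing is done differently from the paper.
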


\section{Group-theoretical Hopf algebras over  $G = \mathbb{A}_4\times C_2$}\label{sec:A4xZ2}  
The classification of the \fd{} Nichols algebras over $G = \mathbb{A}_4\times C_2$ is not known, but there is
$V \in \ydg$ with $\dim \toba(V) = 72$ \cite{G-cm}; the underlying rack is $\T$ and the cocycle is $-1$.  The group $\Aut G$  is isomorphic to $\sk$, via $\varphi: \mathbb{S}_4\to \Aut G$ given by $\varphi(a)(b,i)=(aba^{-1},i)$, for all $a\in \mathbb{S}_4$, $b\in\mathbb{A}_4$, $i\in C_2=\{1, x\}$. Let $M < G$, $$M := \langle ((13)(24),1), ((12)(34), 1),(1,x)\rangle\simeq C_2\times C_2\times C_2.$$

\begin{prop}\label{prop:A4xZ2} The non-trivial group-theoretical Hopf algebras over $G = \mathbb{A}_4\times C_2$ correspond to the group-theoretical data in Table \ref{tab:gp-th-data-a4c2}.
\end{prop}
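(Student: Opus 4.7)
The plan is to follow the same template used in the proofs of Proposition \ref{prop:c3c6}, Proposition \ref{prop:S4} and Proposition \ref{prop:S5}. First I would observe that $|G| = 24 = 2^3 \cdot 3$, so any non-trivial subgroup of $G$ admitting a non-degenerate 2-cocycle must have square order and be solvable; inspecting the subgroup lattice of $\mathbb{A}_4\times C_2$, the only candidates are subgroups isomorphic to $C_2\times C_2$. These are exactly the rank-two subgroups of $M\simeq C_2\times C_2\times C_2$ together with subgroups of the form $V\times C_2$, where $V$ is the Klein four-subgroup of $\mathbb{A}_4$. I would list representatives of their $\Aut G$-orbits, using the isomorphism $\varphi:\mathbb{S}_4\to\Aut G$ described just before the statement.

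Next, for every group-theoretical datum $(F,\alpha,\Gamma,\beta)$ for $G$ I would split into two cases, according to whether $F\cap\Gamma = 1$ (so that $(F,\Gamma)$ is an exact factorization) or $F\cap\Gamma$ is one of the $C_2\times C_2$-subgroups flagged above. In the exact-factorization case, I would enumerate all pairs $(F,\Gamma)$ with $|F|\cdot|\Gamma|=24$ up to conjugation (and up to swapping $F\leftrightarrow\Gamma$ which corresponds to duality via Lemma \ref{prop:algebras gt equivalentes}\ref{item:duals}), and then, for each pair, apply Remark \ref{obs:aux-opext} to reduce to $\alpha = \beta = 1$. Lemma \ref{prop:algebras gt equivalentes}\ref{item:cocommutative} is used to discard the trivial cases (those with $F\lhd G$ abelian and $F$'s normal complement abelian, or their duals). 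In the case $F\cap\Gamma\simeq C_2\times C_2$, I would compute the restriction maps on $H^2(-,\ku^\times)$ to determine which pairs $(\alpha,\beta)$ give $\alpha_{|F\cap\Gamma}\beta_{|F\cap\Gamma}^{-1}$ non-degenerate, and then normalize using Remark \ref{obs:aux-opext} and the $\Aut G\ltimes H^2(G,\ku^\times)$-action.

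After this enumeration, I would identify Hopf algebras that coincide. Duals are handled automatically by Lemma \ref{prop:algebras gt equivalentes}\ref{item:duals}. Twist-equivalences, as in the proof of Proposition \ref{prop:S4}, would be established by finding an invertible $\vect_G$-bimodule category $\mathcal{X}$, of the form $\M(A,\alpha)$ with $A$ a normal abelian subgroup and $\alpha$ an $\ad G$-invariant cocycle, that transports one module category to another under $\boxtimes_{\vect_G}$; this uses Lemma \ref{lema:aux-twist} together with the rank formula via $(F,\Gamma)$-double cosets. To separate the surviving classes I would finally compute each $G(H)$ by means of the exact sequence \eqref{eq:G(gp-th)} in Lemma \ref{lema:aux-G(H)}, which requires determining $N_G(F)/F$ and the orbits of its action on $H^2(F,\ku^\times)$.

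The principal obstacle is the bookkeeping: $G$ has many small subgroups and the $\Aut G\simeq\mathbb{S}_4$-action on them is non-trivial, so making sure the list is exhaustive and that two apparently different data are not equivalent requires care. In particular, the Klein subgroup of $\mathbb{A}_4$ contained in $M$, together with the central factor $C_2$, creates several conjugate but non-equal copies of $C_2\times C_2$ inside $G$, and one must identify which pairs among them give exact factorizations. Once this case analysis is stabilized, the remaining steps are routine applications of Lemma \ref{prop:algebras gt equivalentes}, Remark \ref{obs:aux-opext} and Lemma \ref{lema:aux-G(H)}, and the outcome is the list recorded in Table \ref{tab:gp-th-data-a4c2}; as in the previous sections, \textsf{GAP} and \textsf{HAP} can be used to confirm the cohomological computations.
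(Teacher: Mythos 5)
Your overall template is the right one and matches the paper's strategy: restrict the possible $F\cap\Gamma$ to subgroups isomorphic to $C_2\times C_2$, split into the exact-factorization case and the case $F\cap\Gamma\simeq C_2\times C_2$, list the pairs up to $\Aut G\simeq \sk$, and discard the (co)commutative data via Lemma \ref{prop:algebras gt equivalentes} \ref{item:cocommutative}. But there is a concrete gap in your Case 1: you propose to ``apply Remark \ref{obs:aux-opext} to reduce to $\alpha=\beta=1$'' for \emph{every} exact factorization. That reduction needs a class $\gamma\in H^2(G,\ku^\times)$ restricting to $\alpha^{-1}$ (resp.\ $\beta^{-1}$), and it fails for the factorization $(\langle ((123),1)\rangle, M)$, because $H^2(G,\ku^\times)\simeq C_2$ while $H^2(M,\ku^\times)\simeq C_2\times C_2\times C_2$, so most classes on $M$ are not restrictions from $G$. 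This is not a harmless omission: with $\beta=1$ the datum $(\langle ((123),1)\rangle,1,M,1)$ yields a \emph{commutative}, hence trivial, Hopf algebra (as $M\lhd G$ is abelian), so your procedure would discard this factorization and miss the entries \ref{sec:A4xZ2}.\ref{A4xZ2-nueve} and \ref{sec:A4xZ2}.\ref{A4xZ2-diez} of Table \ref{tab:gp-th-data-a4c2}, which arise precisely from $\beta\in H^2(M,\ku^\times)\setminus H^2(M,\ku^\times)^{\ad G}$. The paper instead keeps the cocycle on $M$ as a parameter and checks that the six non-invariant classes form a single orbit under $\langle \ad_a\rangle$, $a\in G\setminus M$, so all such choices give isomorphic non-trivial Hopf algebras; the same care is needed in your Case 2 for the pair $(M,\mathbb{A}_4\times 1)$.

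Two smaller points. First, your description of the candidate subgroups is off: $V\times C_2=M$ has order $8$, which is not a square, so it admits no non-degenerate $2$-cocycle; the subgroups of $G$ isomorphic to $C_2\times C_2$ are exactly the seven rank-two subgroups of $M$. Second, the final identification step you outline (twist equivalences via Lemma \ref{lema:aux-twist} plus separation by $G(H)$) is more than the proposition asserts and more than the paper achieves: the paper leaves open whether \ref{sec:A4xZ2}.\ref{A4xZ2-siete}, \ref{sec:A4xZ2}.\ref{A4xZ2-diez}, \ref{sec:A4xZ2}.\ref{A4xZ2-once}, \ref{sec:A4xZ2}.\ref{A4xZ2-tres}, \ref{sec:A4xZ2}.\ref{A4xZ2-cinco} and \ref{sec:A4xZ2}.\ref{A4xZ2-uno} are isomorphic, and $G(H)$ cannot separate them since all have $G(H)\simeq C_2\times C_2\times C_2$. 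If you fix the cocycle bookkeeping on $M$, the rest of your plan goes through and reproduces Table \ref{tab:gp-th-data-a4c2}.
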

\begin{question}
Is it true that \ref{sec:A4xZ2}.\ref{A4xZ2-siete} $\simeq$ \ref{sec:A4xZ2}.\ref{A4xZ2-diez}
$\simeq$ \ref{sec:A4xZ2}.\ref{A4xZ2-once} $\simeq$ \ref{sec:A4xZ2}.\ref{A4xZ2-tres} $\simeq$ \ref{sec:A4xZ2}.\ref{A4xZ2-cinco} $\simeq$ \ref{sec:A4xZ2}.\ref{A4xZ2-uno}?
\end{question}

\newcounter{itmalt}
\renewcommand{\theitmalt}{\alph{itmalt}}
\newcommand{\itemalt}[1]{\refstepcounter{itmalt}\ref{sec:A4xZ2}.\theitmalt\label{#1}}

\begin{table}[h]
	\begin{center}
		\caption{Group-theoretical data for $G =\ac \times C_2$.} \label{tab:gp-th-data-a4c2}
		
		\begin{tabular}{|c|p{2cm}|p{2.5cm}|p{2cm}|p{2.5cm}|c|}
			\hline     $\#$ & $F$ &  $\alpha$ & $\Gamma$ & $\beta$ &  $G(H)$ 
			\\ \hline
			\itemalt{A4xZ2-siete}  &  {\small $\langle ((12)(34),x)\rangle$\newline $\simeq C_2$} & 1 &  $\mathbb{A}_4\times 1$ & 1 & {\small $C_2\times C_2\times C_2$}
			\\ \hline
			\itemalt{A4xZ2-ocho}  & $\mathbb{A}_4\times 1$ & 1 & {\small $\langle ((12)(34),x)\rangle$} & 1 & $C_6$
		\\ \hline
			\itemalt{A4xZ2-nueve}  & $\langle ((123), 1)\rangle$\newline $\simeq C_3$ & 1 & $M$ &  {\small $\notin H^2(F,\ku^\times)^{\ad G}$}\newline {\small $\alpha|_{F\cap\Gamma}\neq 1$} & $C_6$
				\\ \hline
			\itemalt{A4xZ2-diez}  & $M$ &  {\small $\notin H^2(F,\ku^\times)^{\ad G}$}\newline {\small $\alpha|_{F\cap\Gamma}\neq 1$} & $\langle ((123), 1)\rangle$ & 1 & {\small $C_2\times C_2 \times C_2$}
			\\ \hline
			\itemalt{A4xZ2-once}  & {\small $\langle ((14)(23), 1),$\newline $((12)(34), x)\rangle$\newline  $\simeq C_2\times C_2$} & 1 & $\langle ( (123), x)\rangle$ \newline$\simeq C_6$  & 1 & {\small $C_2\times C_2 \times C_2$}
			\\ \hline
			\itemalt{A4xZ2-doce}  & $\langle ( (123), x)\rangle$ & 1 & {\small $\langle ((14)(23), 1),$\newline $((12)(34), x)\rangle$} & 1 & $C_6$
				\\ \hline
			\itemalt{A4xZ2-tres}  & {\small $\langle ((13)(24), 1),$\newline $((12)(34), 1) \rangle$\newline $\simeq C_2\times C_2$} & $\neq 1$ & $G$ & 1 & {\small $ C_2\times C_2 \times C_2$}
		\\ \hline
			\itemalt{A4xZ2-cuatro}  & $G$ & 1 & {\small $\langle ((13)(24), 1),$\newline $((12)(34), 1) \rangle$} & $\neq 1$ & $C_6$
\\ \hline	
\itemalt{A4xZ2-cinco}  & {\small $\langle ((13)(24), x),$\newline $((12)(34), 1) \rangle$ \newline $\simeq C_2\times C_2$} & $\neq 1$ & $G$ & 1 & {\small $ C_2\times C_2 \times C_2$}
\\ \hline	
\itemalt{A4xZ2-seis}  & $G$ & 1 & {\small $\langle ((13)(24), x),$\newline $((12)(34), 1) \rangle$} & $\neq 1$ & $C_6$
\\ \hline	
\itemalt{A4xZ2-uno}  & $M$
&  {\small $\notin H^2(F,\ku^\times)^{\ad G}$}\newline {\small $\alpha|_{F\cap\Gamma}\neq 1$} &  $\mathbb{A}_4\times 1$  & 1      & {\small $C_2\times C_2\times C_2$}
\\ \hline
\itemalt{A4xZ2-dos}  & $\mathbb{A}_4\times 1$ & 1  & $M$   &  {\small $\notin H^2(\Gamma,\ku^\times)^{\ad G}$} \newline {\small $\beta|_{F\cap\Gamma}\neq 1$}      &  	{\small	$C_6$}			
			\\ \hline
		\end{tabular}
		
	\end{center}
\end{table}

\pf Since  $|G|=2^3\times 3$, every non-trivial subgroup that admits a non-degenerated 2-cocycle is isomorphic to $C_2\times C_2$. Let $(F,\alpha, \Gamma, \beta)$ be a group-theoretical data for $\sco$; then $F \cap \Gamma$ is either trivial or else $\simeq C_2\times C_2$.
 
 \noindent\emph{Case 1. } Assume that $F \cap \Gamma = 1$, i.e. $(F, \Gamma)$
is an exact exact factorization. Up to automorphism, the exact factorizations  $(F, \Gamma)$ of $\mathbb{A}_4\times C_2$ are either  
\begin{align}
\label{case1-a4c2-uno}( 1\times C_2 ,\mathbb{A}_4\times 1) & \simeq (C_2, \mathbb{A}_4)\\
\label{case1-a4c2-dos}( \langle ((12)(34),x)\rangle, \mathbb{A}_4\times 1) & \simeq (C_2, \mathbb{A}_4),\\ 
\label{case1-a4c2-tres}(\langle ((123), 1)\rangle, M) & \simeq (C_3, C_2\times C_2\times C_2),\\
\label{case1-a4c2-cuatro}(\langle ((14)(23), 1), ((12)(34), 1)\rangle, \langle ( (123), x)\rangle) & \simeq (C_2\times C_2, C_6),\\
\label{case1-a4c2-cinco}(\langle ((14)(23), 1),((12)(34), x)\rangle,\langle ( (123), x)\rangle) & \simeq (C_2\times C_2, C_6), 
\end{align}
or their transposes. If $(F, \Gamma)$ as in \eqref{case1-a4c2-uno} or \eqref{case1-a4c2-cuatro}, then $\gth{G}{F}{\Gamma}{\alpha}{\beta}$ is cocommutative. If $(F, \Gamma)$ is as in \eqref{case1-a4c2-dos}, $F\ntriangleleft G$ and $\Gamma$ is non-abelian, hence $\gth{G}{F}{\Gamma}{\alpha}{\beta}$ is non-trivial. If $(F, \Gamma)$ is as in \eqref{case1-a4c2-cinco}, then $\gth{G}{F}{\Gamma}{\alpha}{\beta}$ is non-trivial, since $F, \Gamma \ntriangleleft G$. We next deal with $(F, \Gamma)$ as in \eqref{case1-a4c2-tres}. There are six elements in $H^2(M,\Bbbk^\times)-H^2(M,\Bbbk^\times)^{\mathrm{ad} G}$ and $\langle \mathrm{ad}_a\rangle < \Aut G$ where $a\in G-M$, acts transitively on $H^2(M,\Bbbk^\times)-H^2(M,\Bbbk^\times)^{\mathrm{ad} G}$, so we get isomorphic Hopf algebras. For such an $\beta$, $\gth{G}{F}{\Gamma}{1}{\beta}$ is non-trivial, as $F \ntriangleleft G$.

\noindent\emph{Case 2. }$F \cap \Gamma \simeq C_2\times C_2$. Up to automorphism, either $(F, \Gamma)$ or $(\Gamma, F)$ is one of

\begin{align}
\label{case2-a4c2-uno}
(\langle ((13)(24), 1), ((12)(34), 1) \rangle, G) & \simeq (C_2\times C_2, G), \\
\label{case2-a4c2-dos}
(\langle (1 ,x), ((12)(34), 1) \rangle, G) & \simeq (C_2\times C_2, G), \\
\label{case2-a4c2-tres}
(\langle ((13)(24), x), ((12)(34), 1) \rangle, G) & \simeq (C_2\times C_2, G),  \\
\label{case2-a4c2-cuatro}
(M,  \mathbb{A}_4\times 1)  &\simeq (C_2\times C_2 \times C_2, \mathbb{A}_4).
\end{align} 

If $(F, \Gamma)$ is as in \eqref{case2-a4c2-uno} then $\gth{G}{F}{G}{\alpha}{1}$ is cocommutative. If $(F, \Gamma)$ is as in \eqref{case2-a4c2-dos} or \eqref{case2-a4c2-tres}, $F\ntriangleleft G$, hence $\gth{G}{F}{G}{\alpha}{1}$ is non-trivial.  We next deal with  $(F, \Gamma)$ as in \eqref{case2-a4c2-cuatro}. For $\alpha \in X$, $\gth{G}{M}{\mathbb{A}_4\times 1}{\alpha}{1}$ is not cocommutative; and is not commutative, since $\mathbb{A}_4\times 1$ is non-abelian. $\gth{G}{M}{\mathbb{A}_4\times 1}{1}{\beta}$ is cocommutative.
In conclusion, we have the non-trivial Hopf algebras described by the group-theoretical data in Table \ref{tab:gp-th-data-a4c2}.
\epf

%$H^2(M ,\ku^\times)\simeq C_2\times C_2\times C_2$

Here  \ref{sec:A4xZ2}.\ref{A4xZ2-tres} and \ref{sec:A4xZ2}.\ref{A4xZ2-cinco} are twistings of $\ku G$. 

\begin{theorem}\label{prop:A4xZ2-pointed}	
	The Hopf algebras from Table \ref{tab:gp-th-data-a4c2} have a non-zero Yetter-Drinfeld module $V$ with $\dim \toba(V) = 72$. 
	By bosonization, we get new  Hopf algebras with the dual Chevalley property of dimension 1728.\qed
\end{theorem}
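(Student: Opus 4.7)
The plan is to deduce this directly from the general framework of Section 1.3 together with Proposition \ref{prop:A4xZ2}. First, I would invoke \cite{G-cm} to fix a Yetter-Drinfeld module $V \in \ydg$ of rack type $(\T, -1)$ with $\dim \toba(V) = 72$. Since $|G| = |\mathbb{A}_4 \times C_2| = 24$, bosonization yields $\toba(V) \# \ku G$, a finite-dimensional Hopf algebra of dimension $72 \cdot 24 = 1728$ with coradical $\ku G$, hence with the dual Chevalley property.

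Next, let $H$ be any of the group-theoretical Hopf algebras listed in Table \ref{tab:gp-th-data-a4c2}. By Proposition \ref{prop:A4xZ2}, $H \mor \ku G$, so by the discussion in \S\ref{subsec:intro3} there is a braided tensor equivalence $\Fc \colon \ydg \to \ydhh$ that preserves Nichols algebras, in the sense that $\Fc(\toba(V)) \simeq \toba(\Fc(V))$ as algebras and coalgebras. In particular, $V' := \Fc(V)$ is a non-zero object of $\ydhh$ with $\dim \toba(V') = \dim \toba(V) = 72$. The bosonization $\Ac(V') = \toba(V') \# H$ is then a finite-dimensional Hopf algebra with
\[
\dim \Ac(V') = 72 \cdot \dim H = 72 \cdot 24 = 1728,
\]
and coradical $\Ac(V')_0 \simeq H$, which is semisimple. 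Hence $\Ac(V')$ has the dual Chevalley property.

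Finally, I would argue these examples are genuinely new: for each entry of Table \ref{tab:gp-th-data-a4c2} the Hopf algebra $H$ is non-trivial, i.e., neither isomorphic to $\ku G'$ nor $\ku^{G'}$ for any finite group $G'$. This follows from Proposition \ref{prop:A4xZ2}, since by construction these are precisely the non-trivial group-theoretical Hopf algebras over $G$. Combined with the invariants $G(H)$ computed in the last column (via Lemma \ref{lema:aux-G(H)}), this guarantees that none of the resulting bosonizations $\Ac(V')$ fall into the pointed or copointed classes already treated in the literature.

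Since the proof is essentially a combination of pre-existing results, there is no real obstacle; the only mild subtlety is that the braided equivalence $\Fc$ is not explicit, so one cannot read off the coaction and action on $V'$ directly — but for the statement of the theorem only the isomorphism class of $\toba(V')$ and the dimension count are needed, so this is not an issue here (it is however flagged as a genuine difficulty for describing liftings, as noted in the Remark following Table \ref{tab:app2}).
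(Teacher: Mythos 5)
Your proposal is correct and follows essentially the same route as the paper, which states the theorem with an immediate \texttt{\textbackslash qed} precisely because it is a direct combination of Proposition \ref{prop:A4xZ2}, the existence of $V$ of rack type $(\T,-1)$ with $\dim\toba(V)=72$ from \cite{G-cm}, and the general principle of \S\ref{subsec:intro3} that a braided equivalence $\Fc:\ydg\to\ydhh$ preserves Nichols algebras. The dimension count $72\cdot 24=1728$ and the non-triviality of the $H$ in Table \ref{tab:gp-th-data-a4c2} are exactly the points the paper relies on.
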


The liftings of $\toba(V) \# \ku G$, where $V$ is as above, are classified in \cite[Theorem 6.3]{GVay}. Indeed, let $(x_j)_{0\leqslant j \leqslant 2}$ be the basis of $V$ as in \emph{loc. cit.} Let
\begin{align*}
\Ss= \{(\lambda_1, \lambda_2, \lambda_3) \in \ku^2 \text{ satisfying  \cite[(29), (33)]{GVay}}\}.
\end{align*}
For $(\lambda_1, \lambda_2, \lambda_3) \in \Ss$,  let $H(\lambda_1,\lambda_2, \lambda_3)$ be 
$T(V)\ \# \ku G$ modulo the ideal generated by
\begin{align*}
 x^2_0 &- \lambda_1 (1 - g^2_0), &  & x_0x_1 + x_1x_2 + x_2x_0 - \lambda_2(1 - g_0 g_1) & & \text{and}
\end{align*}
$$x_2x_1x_0x_2x_1x_0 + x_1x_0x_2x_1x_0x_2+x_0x_2x_1x_0x_2x_1 - s_X - \lambda_3(1 - g_0^3 g_1^3),$$ 
where
\begin{align*}s_X  = & \lambda_2(x_2x_1x_0x_2 + x_1x_0x_2x_1 + x_0x_2x_1x_0) - \lambda_2^3(g_0 g_1 - g_0^3 g_1^3) \\
 +& \lambda^2_1 g_0^2(g_3^2(x_2x_3 + x_0x_2) + g_1 g_3(x_2x_1 + x_1x_3) + g_1^2(x_1x_0 + x_0x_3) \\
 -& 2\lambda^2_1 g_0^2(x_0x_3 - x_2x_3 - x_1x_2 + x_1x_0) - 2\lambda^2_1 g_2^2(x_2x_3 - x_1x_3 + x_0x_2 - x_0x_1)\\
 -& 2\lambda^2_1 g_1^2(x_2x_1 + x_1x_3 + x_1x_2 - x_0x_3 + x_0x_1)+ \lambda_2\lambda_1 (g_2^2x_0x_3 + g_1^2x_2x_3 \\
 +& g_0^2x_1x_3) + \lambda_2^2 g_0 g_1(x_2x_1 + x_1x_0 + x_0x_2 - \lambda_1) -\lambda_2\lambda^2_1 (3g_0^3g_3 - 2g_0g_1^3   \\ 
-&g_0^2g_2 - 2g_0^3g_1 + g_2- g_1^2 + g_0^2) -\lambda_2(\lambda_1 -\lambda_2) (\lambda_1g_0^2 (g_3^2 +g_1g_3 +g_1^2 +2g_0g_1^3)\\
+ & x_2x_1 +x_1x_0 +x_0x_2).
\end{align*}

Then
\begin{itemize}[leftmargin=*]
\item $H(\lambda_1,\lambda_2, \lambda_3)$ is a lifting of 
$\toba(V) \# \ku G$,

\item any lifting of $\toba(V)\ \# \ku G$ is $\simeq$ to $H(\lambda_1,\lambda_2, \lambda_3)$ for some $(\lambda_1, \lambda_2, \lambda_3) \in \Ss$,

\item $H(\lambda_1,\lambda_2, \lambda_3) \simeq H(\lambda'_1,\lambda'_2, \lambda'_3)$ iff there exists $\mu \in \ku^{\times}$ such that $(\lambda_1,\lambda_2, \lambda_3) = \mu (\lambda'_1,\lambda'_2, \lambda'_3)$.
\end{itemize}

\begin{obs}\label{obs-lifting-a4c2} Let $H$ be the Hopf algebra corresponding  either to  \ref{sec:A4xZ2}.\ref{A4xZ2-tres} or to \ref{sec:A4xZ2}.\ref{A4xZ2-cinco}. The classification of all liftings of $\toba(V) \# H$ follows from \cite[Th. 6.3]{GVay}. Namely, let $J \in \ku G \otimes \ku G$ such that $H\simeq (\ku G)^J$. Then
\begin{itemize}[leftmargin=*]
\item $H(\lambda_1,\lambda_2, \lambda_3)^J$ is a lifting of 
$\toba(V)\ \# H$, for every $(\lambda_1, \lambda_2, \lambda_3) \in \Ss$,

\item any lifting of $\toba(V)\ \# H$ is $\simeq$ to $H(\lambda_1,\lambda_2, \lambda_3)^J$ for some $(\lambda_1, \lambda_2, \lambda_3) \in \Ss$,

\item $H(\lambda_1,\lambda_2, \lambda_3)^J \simeq H(\lambda'_1,\lambda'_2, \lambda'_3)^J$ iff there is $\mu \in \ku^{\times}$ such that $(\lambda_1,\lambda_2, \lambda_3) = \mu (\lambda'_1,\lambda'_2, \lambda'_3)$.
\end{itemize}
\end{obs}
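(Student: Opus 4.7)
The plan is to transport the known classification of liftings over $\ku G$ to $H$ through the Drinfeld twist identified in Proposition \ref{prop:A4xZ2}: both \ref{sec:A4xZ2}.\ref{A4xZ2-tres} and \ref{sec:A4xZ2}.\ref{A4xZ2-cinco} are twistings of $\ku G$, so $H \simeq (\ku G)^J$ for some invertible twist $J \in \ku G \otimes \ku G$. The rest of the proof is then a general twisting principle combined with a direct appeal to \cite[Theorem 6.3]{GVay}.

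First I would note that twisting by $J$ preserves the coradical filtration verbatim: the coalgebra structure of $A^J$ is identical to that of $A$, so $(A^J)_0 \simeq (\ku G)^J = H$ whenever $A_0 \simeq \ku G$. Moreover, the operation of forming the associated graded commutes with twisting, $\gr(A^J) \simeq (\gr A)^J$, because $J$ already lies in $\ku G \otimes \ku G = A_0 \otimes A_0 \subset A \otimes A$ and hence descends unchanged to $\gr A \otimes \gr A$. Combined with the fact that the braided equivalence $\Fc: \ydg \to \ydhh$ induced by $J$ preserves Nichols algebras (recalled in \S \ref{subsec:intro3}), this gives a canonical identification $(\toba(V) \# \ku G)^J \simeq \toba(\Fc(V)) \# H$. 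Therefore $A \mapsto A^J$ is a bijection from liftings of $\toba(V) \# \ku G$ to liftings of $\toba(\Fc(V)) \# H$, with two-sided inverse $B \mapsto B^{J^{-1}}$. In particular it descends to isomorphism classes: an isomorphism $A^J \simeq B^J$ of Hopf algebras twists back to an isomorphism $A \simeq B$, and vice versa.

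It then remains to invoke \cite[Theorem 6.3]{GVay}, which gives the three-parameter family $\{H(\lambda_1,\lambda_2,\lambda_3) : (\lambda_1,\lambda_2,\lambda_3) \in \Ss\}$ as a complete set of liftings of $\toba(V) \# \ku G$, with $H(\lambda_1,\lambda_2,\lambda_3) \simeq H(\lambda'_1,\lambda'_2,\lambda'_3)$ iff $(\lambda_1,\lambda_2,\lambda_3) = \mu(\lambda'_1,\lambda'_2,\lambda'_3)$ for some $\mu \in \ku^\times$. Applying the bijection $A \mapsto A^J$ to this classification produces the three bulleted statements of the remark, since (i) each $H(\lambda_1,\lambda_2,\lambda_3)^J$ lifts $\toba(\Fc(V)) \# H$, (ii) every lifting pulls back via $(\cdot)^{J^{-1}}$ to a lifting of $\toba(V) \# \ku G$, and (iii) the isomorphism criterion is preserved. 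The only delicate point is the compatibility $\gr(A^J) \simeq (\gr A)^J$; this is where one must use that $J \in A_0 \otimes A_0$, so that the twist descends level-wise along the coradical filtration rather than mixing degrees.
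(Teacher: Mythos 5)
Your overall strategy --- transporting the classification of liftings of $\toba(V)\#\ku G$ from \cite[Theorem 6.3]{GVay} through the bijection $A \mapsto A^J$ --- is exactly the argument the paper intends (the remark is stated without a detailed proof, in parallel with Remarks \ref{obs-lifting-c3c6}, \ref{obs-lifting-c5c20} and \ref{obs-lifting-s4}). However, your justification of the key compatibility $\gr(A^J) \simeq (\gr A)^J$ contains a genuine error. Under the convention used throughout the paper, a twist $J$ deforms the \emph{comultiplication}, $\Delta^J = J\Delta(\cdot)J^{-1}$, and leaves the multiplication untouched; this is forced by the requirement that $\Rep A$ and $\Rep A^J$ be tensor equivalent. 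Your sentence ``the coalgebra structure of $A^J$ is identical to that of $A$, so $(A^J)_0 \simeq (\ku G)^J$'' is therefore both false and internally inconsistent: if the coalgebra structure were literally unchanged, the coradical of $A^J$ would be $\ku G$ itself as a coalgebra, whereas $(\ku G)^J \simeq H$ is not cocommutative.

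The compatibility you need is nevertheless true, but for a different reason: since $J$ is invertible in $A_0 \otimes A_0$, since each term $A_n$ of the coradical filtration satisfies $A_0 A_n A_0 \subseteq A_n$, and since $\Delta(A_n) \subseteq \sum_i A_i \otimes A_{n-i}$, conjugation by $J$ preserves each subspace $\sum_i A_i \otimes A_{n-i}$. Hence the coradical filtration of $A^J$ coincides with that of $A$ as a filtration by subspaces, $(A^J)_0$ equals $A_0$ endowed with the twisted coalgebra structure $(\ku G)^J \simeq H$, and $\gr(A^J) \simeq (\gr A)^J$. With this correction the rest of your argument --- the bijection on liftings with inverse $B \mapsto B^{J^{-1}}$, the identification $(\toba(V)\#\ku G)^J \simeq \toba(\Fc(V))\# H$, and the transport of the isomorphism criterion from \cite[Theorem 6.3]{GVay} --- goes through and matches the paper's intent.
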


\section{Group-theoretical Hopf algebras over $G = C_7\rtimes_3 C_6$} \label{sec:Afines7}

The classification of the \fd{} Nichols algebras over $G$ is not known, but there are 
$V_3, V_5 \in \ydg$ with $\dim \toba(V_j) = 326592$, $ j = 3,5$, see \cite{G-cm}.  The underlying racks are $\Q_{7, j}$, $j=3, 5$; in both cases, the cocycle is $-1$. Note that $C_7\rtimes_3 C_6\simeq C_7\rtimes_5 C_6$.

There are  two non-trivial semisimple  Hopf algebras of dimension 42, $\mathcal{A}_7(2,3)$ and $\mathcal{A}_7(3,2) \simeq \mathcal{A}_7(2,3)^*$; $G(\mathcal{A}_7(3,2)) \simeq G(\mathcal{A}_7(2,3)) \simeq C_6$ and as coalgebras, $\mathcal{A}_7(2,3) \simeq \ku C_6 \oplus (M_3(\ku)^*)^4$, while $\mathcal{A}_7(3,2) \simeq \ku C_6 \oplus (M_2(\ku)^*)^9$. See  \cite[Chapter 10]{N-memoir}. 
	 
\begin{prop}\label{prop:Afin(7,5)} The non-trivial group-theoretical Hopf algebras over $G$ are $\mathcal{A}_7(2,3)$ and $\mathcal{A}_7(3,2)$.
\end{prop}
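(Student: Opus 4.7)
The plan rests on the observation that $|G| = 42 = 2\cdot 3 \cdot 7$ is squarefree, combined with the known classification of semisimple Hopf algebras of dimension $42$. First I would note that, as recalled in \S \ref{subsec:twistings-of-groups}, a subgroup $S<G$ carrying a non-degenerate $2$-cocycle must have $|S|$ a square; since $|G|$ is squarefree, only $S=1$ qualifies. Hence every group-theoretical datum $(F,\alpha,\Gamma,\beta)$ for $G$ must satisfy $F\cap \Gamma = 1$, i.e.\ $(F,\Gamma)$ is an exact factorization. Moreover, the subgroup lattice of $G$ consists of cyclic subgroups together with the Frobenius group $C_7\rtimes C_3$ (of order $21$), the dihedral group $D_7\simeq C_7\rtimes C_2$ (of order $14$), and $G$ itself; all of these have trivial Schur multiplier, so $H^2(F,\ku^\times)=H^2(\Gamma,\ku^\times)=1$. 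Consequently a group-theoretical datum amounts merely to the choice of an exact factorization $G=F\Gamma$.

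Next I would enumerate the exact factorizations of $G=\langle a,x\mid a^7=x^6=1,\ xax^{-1}=a^3\rangle$. Because $|F|\,|\Gamma|=42$ with $\gcd(|F|,|\Gamma|)=1$, the possibilities for $(|F|,|\Gamma|)$, up to swap, are $(1,42)$, $(2,21)$, $(3,14)$, $(6,7)$; the nontrivial ones are realized (up to conjugation) by the pairs
\begin{align*}
(\langle x^3\rangle,\langle a,x^2\rangle),\qquad (\langle x^2\rangle,\langle a,x^3\rangle),\qquad (\langle x\rangle,\langle a\rangle),
\end{align*}
together with the transposed pairs. For the last pair, $\langle a\rangle\lhd G$ is abelian, so Lemma \ref{prop:algebras gt equivalentes}\ref{item:cocommutative} gives the trivial Hopf algebras $\ku G$ and $\ku^G$. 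For the first two pairs neither factor is normal-abelian (e.g.\ $ax^3a^{-1}=x^3a^{-2}\notin\langle x^3\rangle$, and $\langle a,x^2\rangle$ is non-abelian; analogously for the second pair), so the corresponding Hopf algebras $\gth{G}{F}{\Gamma}{1}{1}$ are nontrivial by Lemma \ref{prop:algebras gt equivalentes}\ref{item:cocommutative}. By Lemma \ref{prop:algebras gt equivalentes}\ref{item:duals} the transposed data furnish the dual Hopf algebras; thus there are at most four nontrivial group-theoretical Hopf algebras over $G$, occurring in two dual pairs.

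Finally I would invoke the classification of semisimple Hopf algebras of dimension $42$ from \cite[Chapter 10]{N-memoir}: the only nontrivial ones are $\mathcal{A}_7(2,3)$ and $\mathcal{A}_7(3,2)\simeq \mathcal{A}_7(2,3)^*$. Since each of our two candidate dual pairs must land in $\{\mathcal{A}_7(2,3),\mathcal{A}_7(3,2)\}$, the proposition follows.

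The main obstacle is not enumeration but the identification: I would need to confirm that each candidate pair is nontrivial (not merely cocommutative-in-disguise) and then match it with the correct $\mathcal{A}_7(\cdot,\cdot)$. For the latter, I would use Lemma \ref{prop:algebras gt equivalentes}\ref{item:abext} to present each $H$ as a bismash product $\ku^F\,\#\,\ku\Gamma$, compute $G(H)$ via Lemma \ref{lema:aux-G(H)} (in each case obtaining a group of order $6$, consistent with $|G(\mathcal{A}_7(2,3))|=|G(\mathcal{A}_7(3,2))|=6$), and compare the dimensions of the matrix blocks in the coalgebra decomposition with the data $\ku C_6\oplus (M_3(\ku)^*)^4$ and $\ku C_6\oplus (M_2(\ku)^*)^9$ recorded for $\mathcal{A}_7(2,3)$ and $\mathcal{A}_7(3,2)$, respectively. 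This is essentially a Mackey-type computation of the induced representations, and should distinguish the two dual pairs completely.
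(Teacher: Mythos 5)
Your proposal is correct and follows essentially the same route as the paper: reduce to exact factorizations (since $42$ is squarefree no subgroup carries a non-degenerate $2$-cocycle and all Schur multipliers vanish), enumerate the three conjugacy classes of factorizations, discard $(\langle x\rangle,\langle a\rangle)$ by Lemma \ref{prop:algebras gt equivalentes} \ref{item:cocommutative}, and identify the survivors with $\mathcal{A}_7(2,3)$ and $\mathcal{A}_7(3,2)$. The only difference is one of emphasis: you close the argument by citing the classification of semisimple Hopf algebras of dimension $42$ and leave the finer matching as a plan, whereas the paper carries out that matching explicitly by presenting the duals as abelian extensions $\ku^{C_7\rtimes C_3}\hookrightarrow H^*\twoheadrightarrow \ku C_2$ and $\ku^{D_7}\hookrightarrow H'^*\twoheadrightarrow \ku C_3$ and comparing the coalgebra block decompositions, exactly as you sketch in your final paragraph.
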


\pf Let $(F,\alpha, \Gamma, \beta)$ be a group-theoretical data for $G$; then $F \cap \Gamma$ is trivial. Up to conjugation, either $(F, \Gamma)$ or $(\Gamma, F)$ are isomorphic to  $(C_2, C_{7}\rtimes C_{3})$,  $(C_3,  D_7)$, or $(C_6, C_7)$.
By Lemma \ref{prop:algebras gt equivalentes} \ref{item:cocommutative},  $\gth{G}{C_6}{C_7}{1}{1}$ is trivial, while $H := \gth{G}{C_2}{C_{7}\rtimes C_{3}}{1}{1}$ and 
$H' := \gth{G}{C_3}{D_7}{1}{1}$ are non-trivial. 

Now $H^*$ fits into $\xymatrix@C-10pt{ \Bbbk^{C_{7}\rtimes C_{3}}\ar@{^{(}->}[r] & H^*\ar@{->>}[r] & \Bbbk C_2}$. Since $\Bbbk^{C_{7}\rtimes C_{3}} \simeq \Bbbk C_3 \oplus (M_3(\Bbbk)^*)^2$ as coalgebras,  $H^* \simeq \mathcal{A}_7(2,3)$; then $H\simeq \mathcal{A}_7(3,2)$. Also, $H'^*$ fits into  $\xymatrix@C-10pt{ \Bbbk^{D_{7}}\ar@{^{(}->}[r] & H'^* \ar@{->>}[r] & \Bbbk C_3}$. As  coalgebras, $\Bbbk^{D_{7}} \simeq \Bbbk C_2 \oplus (M_2(\Bbbk)^*)^3$. Hence  $H'^* \simeq \mathcal{A}_7(3,2)$; therefore $H' \simeq \mathcal{A}_7(2,3)$.  \epf

\begin{theorem}\label{prop:c7c6-pointed}	
Each one of  $\mathcal{A}_7(2,3)$ and $\mathcal{A}_7(3,2)$ has two non-zero Yetter-Drinfeld modules $V$ with $\dim \toba(V) =  326592$. 
Thus we get new  Hopf algebras with the dual Chevalley property of dimension 13716864. \qed
\end{theorem}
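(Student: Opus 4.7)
The plan is to mirror the scheme used in the earlier theorems of this section (for instance \ref{prop:c3c6-pointed} and \ref{prop:c5c20-pointed}): start from the two known finite-dimensional Nichols algebras over $\ku G$ and transport them across the Morita equivalence established in Proposition \ref{prop:Afin(7,5)}, then bosonize to produce Hopf algebras with the dual Chevalley property.

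More concretely, first I recall the data from Table \ref{tab:app2}: on $G=C_7\rtimes_3 C_6 \simeq C_7\rtimes_5 C_6$ there are two Yetter--Drinfeld modules $V_3,V_5\in\ydg$ whose braided vector spaces come from the racks $\Q_{7,3}$ and $\Q_{7,5}$ with constant cocycle $-1$, and by \cite{G-zoo} one has $\dim\toba(V_j)=326592$ for $j=3,5$. By Proposition \ref{prop:Afin(7,5)}, both non-trivial semisimple Hopf algebras of dimension $42$, namely $\mathcal{A}_7(2,3)$ and $\mathcal{A}_7(3,2)$, are Morita-equivalent to $\ku G$. Thus for each such $H$ the discussion in \S \ref{subsec:intro3} furnishes a braided tensor equivalence $\Fc:\ydg\to\ydhh$ that preserves Nichols algebras, in the sense that $\Fc(\toba(V))\simeq\toba(\Fc(V))$ as algebras and coalgebras.

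Apply $\Fc$ to each of $V_3,V_5$ to obtain two non-zero Yetter--Drinfeld modules $\Fc(V_3),\Fc(V_5)\in\ydhh$ with $\dim\toba(\Fc(V_j))=\dim\toba(V_j)=326592$. Bosonize: the Hopf algebras $\toba(\Fc(V_j))\#H$ have coradical $H$ (a cosemisimple Hopf subalgebra), so they enjoy the dual Chevalley property, and their dimension is
\[
\dim H\cdot \dim\toba(\Fc(V_j)) \;=\; 42\cdot 326592 \;=\; 13716864.
\]
Since this is carried out for both $H=\mathcal{A}_7(2,3)$ and $H=\mathcal{A}_7(3,2)$, each case yields two new examples, and the fact that they are non-trivial (i.e.\ not twists of group algebras and not duals of pointed Hopf algebras) follows from the non-triviality of $H$ itself together with the fact that, as recorded in Remark after Table \ref{tab:app2}, there is no group $G'\not\simeq G$ with $\ku G'\mor \ku G$.

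I do not expect any substantive obstacle: everything is a direct application of the general machinery recalled in \S\ref{subsec:intro3} once Proposition \ref{prop:Afin(7,5)} is in hand. The only point that requires mild care is to state clearly that the bosonizations $\toba(\Fc(V_j))\#H$ are genuinely new Hopf algebras, which is immediate since their coradicals are $\mathcal{A}_7(2,3)$ or $\mathcal{A}_7(3,2)$, neither of which is a group algebra or its dual.
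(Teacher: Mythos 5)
Your argument is exactly the one the paper intends: the theorem is stated with a \qed because it follows immediately from Proposition \ref{prop:Afin(7,5)} together with the transport of Nichols algebras along the braided equivalence $\Fc:\ydg\to\ydhh$ recalled in \S\ref{subsec:intro3}, followed by bosonization and the dimension count $42\cdot 326592=13716864$. Your write-up simply makes this implicit reasoning explicit, so it is correct and takes essentially the same approach.
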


\section{Group-theoretical Hopf algebras over  $D_n$}\label{sec:dn}
Let $D_n=\langle r,s: r^n=s^2=1, srs=r^{-1}\rangle$ be the dihedral group of order $2n$. 
The classification of the \fd{} pointed Hopf algebras over $D_n$ is known only when  
$n=4t\geqslant 12$, $t\in \N$ \cite{FG}.
Here we deal with the group-theoretical Hopf algebras over $G= D_n$ for every $n \geqslant 3$. We summarize below some well-known facts about $D_n$:

	\begin{center}
		\begin{tabular}{|p{2cm}|c|c|}
		\hline  & $n$ odd & $n$ even
		\\ \hline $Z(D_n)$ & $1$ & $\langle r^{n/2}\rangle$
		
\\\hline 

\multicolumn{1}{|c|}{ $\Aut D_n$} & \multicolumn{1}{c}{$\{\phi_{k, l}:  (k, n)=1, 0\leqslant l< n\}$;} &
 \multicolumn{1}{c|}{$\phi_{k, l}(r)=r^k$, $\phi_{k, l}(s)=sr^l$}
\\\hline 

\multicolumn{1}{|c|}{{\small Subgroups}} & \multicolumn{1}{c}{$\langle r^d\rangle$,  $d| n$;} &
 \multicolumn{1}{c|}{$\langle r^d, r^ks\rangle$,  $d|n$, $0\leqslant k< d$}
			
\\\hline 

\multicolumn{1}{|p{2cm}|}{{\small Subgroups \newline up to $\Aut D_n$}} & \multicolumn{1}{c}{$\langle r^d\rangle$,  $d| n$;} &
 \multicolumn{1}{c|}{$\langle r^d, s\rangle$,  $d|n$}			
			
\\ \hline {\small Normal\newline subgroups} & $\langle r^d \rangle$,  $d|n$ &
$\langle r^d \rangle$,  $d|n$; $\langle r^2, s\rangle$; $\langle r^2, sr \rangle$
			\\ \hline $[D_n, D_n]$ &  $\langle r\rangle$ & $\langle r^2\rangle$
\\ \hline $\widehat{D_n}$ &  $C_2$ & $C_2\times C_2$

			\\ \hline {\small $H^2(D_n,\mathbb C^\times)$} & $1$ & $C_2$ ${}^{(\#)}$

			\\ \hline
		\end{tabular}
\end{center}

\noindent {\small ${}^{(\#)}$
A representative of the non-trivial class is $f_{\chi}\in Z^2(D_n,\ku^\times)$, $ f_{\chi}(r^is^j,r^ks^l)=\chi(r^k)^{j},\,\, j\in\{0, 1\},$  where $\chi:\langle r\rangle\to \ku^\times$ is a character of order $n$. Note that $\chi(r^{\frac{n}{2}})=-1$.}

\subsection{Group-theoretical Hopf algebras over  $G = D_n$, $n$ odd}\label{sec:dn-odd}

\begin{prop}\label{prop:dn-odd} Every  group-theoretical Hopf algebra over $G$ is trivial.
\end{prop}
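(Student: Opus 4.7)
The plan is to reduce to the case $\alpha=\beta=1$ and $F\cap\Gamma=1$, and then exploit the odd parity of $n$ to force one of the factors to lie inside $\langle r\rangle$.

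First, by the table preceding the statement, every subgroup of $D_n$ is either cyclic $C_m$ or dihedral $D_m$ for some $m\mid n$; with $n$ odd this forces $m$ odd, and in either case $H^2(\,\cdot\,,\ku^\times)=1$. Hence in any group-theoretical datum $(F,\alpha,\Gamma,\beta)$ for $D_n$ we may choose representatives $\alpha=1$ and $\beta=1$. The non-degeneracy condition in Definition \ref{def:gp-th} then demands that the trivial cocycle on $F\cap\Gamma$ be non-degenerate; since $\ku(F\cap\Gamma)$ is simple only when $F\cap\Gamma=1$, we must have $F\cap\Gamma=1$, so $D_n=F\Gamma$ is an exact factorization.

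Next, I would rule out the possibility that both $F$ and $\Gamma$ contain a reflection. Suppose $F=\langle r^{d_1},sr^{k_1}\rangle$ and $\Gamma=\langle r^{d_2},sr^{k_2}\rangle$ with $d_i\mid n$, so that $|F|=2n/d_1$ and $|\Gamma|=2n/d_2$. Then $|F||\Gamma|=2n$ gives $d_1d_2=2n$, while $F\cap\Gamma\supseteq\langle r^{d_1}\rangle\cap\langle r^{d_2}\rangle=\langle r^{[d_1,d_2]}\rangle=1$ gives $[d_1,d_2]=n$. Combining, $(d_1,d_2)=d_1d_2/[d_1,d_2]=2$, contradicting that $d_1,d_2$ divide the odd number $n$. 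Therefore at least one factor — say $\Gamma$, after invoking duality via Lemma \ref{prop:algebras gt equivalentes}\ref{item:duals} if necessary — is contained in $\langle r\rangle$, and so is cyclic.

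Finally, such $\Gamma$ is abelian and normal in $D_n$, and the trivial class $\beta=1$ is automatically $\operatorname{ad}G$-invariant. Lemma \ref{prop:algebras gt equivalentes}\ref{item:cocommutative} then implies that $H=\gth{G}{F}{\Gamma}{1}{1}$ is commutative, and since any semisimple commutative Hopf algebra is of the form $\ku^{G'}$ for a finite group $G'$, $H$ is trivial in the sense of \S\ref{subsec:preliminaries}. The degenerate exact factorizations $F=1$ or $\Gamma=1$ produce $\ku G$ or $\ku^G$ directly. The only substantive step in the argument is the parity calculation ruling out two dihedral factors; everything else is formal bookkeeping with the lemmas already established.
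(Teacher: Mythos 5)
Your proposal is correct and follows essentially the same route as the paper, which only sketches this step: the paper notes that all relevant $H^2$ groups vanish, deduces $F\cap\Gamma=1$, and then appeals to ``routine arguments, using that $n$ is odd'' together with Lemma \ref{prop:algebras gt equivalentes}. Your parity computation ruling out two dihedral factors (which would force $(d_1,d_2)=2$, impossible for divisors of an odd $n$) is precisely the routine argument being alluded to, and the conclusion via Lemma \ref{prop:algebras gt equivalentes}\ref{item:cocommutative} is the intended one.
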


\pf  Clearly $H^2(\langle r^{d}\rangle, \ku^\times)=1 = H^2(D_{n/d}, \ku^\times)$. Let $(F,\alpha, \Gamma, \beta)$ be a group-theoretical data for $D_n$; then $F \cap \Gamma=1$. By routine arguments, using that $n$ is odd, and Lemma \ref{prop:algebras gt equivalentes}, we see that $\gth{D_n}{F}{\Gamma}{1}{1}$ is trivial.
\epf

\subsection{Group-theoretical Hopf algebras over  $G =D_n$, $n$ even}\label{sec:dn-even}

\begin{prop}\label{prop:dn-even} The classification of the non-trivial group-theoretical Hopf algebras over $G$ is given by the group-theoretical data in Table \ref{tab:gp-th-data-dn-even}.
\end{prop}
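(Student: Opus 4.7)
The plan is to run the same case analysis used in Propositions \ref{prop:S4} and \ref{prop:S5}, tailored now to the parametric family $D_n$ with $n$ even. Fix a group-theoretical datum $(F,\alpha,\Gamma,\beta)$ for $D_n$.

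First, I identify the subgroups $S<D_n$ admitting a non-degenerate 2-cocycle. Since $|S|$ must be a perfect square and $H^2(\langle r^d\rangle,\ku^\times)=1$, only dihedral subgroups qualify. For $S=\langle r^d,r^k s\rangle\cong D_{n/d}$, the group $H^2(D_{n/d},\ku^\times)$ is non-trivial exactly when $n/d$ is even, and its unique non-trivial class is non-degenerate iff $2(n/d)$ is a square, i.e.\ $n/d=2t^2$ for some $t\in\N$. This singles out two families of candidates for a non-trivial $F\cap\Gamma$: Klein four-subgroups (the case $n/d=2$) and larger dihedrals of square order ($n/d=2t^2$ with $t\ge 2$).

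Second, I split according to whether $F\cap\Gamma$ is trivial or not. In each case I enumerate the factorisations $D_n=F\Gamma$ modulo the action of $\Aut D_n\ltimes H^2(D_n,\ku^\times)$ afforded by Lemma \ref{prop:algebras gt equivalentes}\ref{item:iso}, using the explicit list of subgroups together with the formula $\phi_{k,l}$ for $\Aut D_n$. When $F\cap\Gamma=1$, the candidates reduce to pairs of the shape $(\langle r^d\rangle,\langle r^e,r^m s\rangle)$ or $(\langle r^d,r^m s\rangle,\langle r^e,r^l s\rangle)$, subject to order and coprimality conditions on $d,e,m,l$. When $F\cap\Gamma\neq 1$, the intersection forces at least one of the two factors to contain a dihedral subgroup of square order, and Remark \ref{obs:aux-opext} normalises the 2-cocycle on the other factor.

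Third, I discard the data yielding trivial Hopf algebras. By Lemma \ref{prop:algebras gt equivalentes}\ref{item:cocommutative}, these are precisely the data with $F\lhd D_n$ abelian and $\alpha$ $\ad D_n$-invariant, together with their duals. Using the description of the normal subgroups of $D_n$ collected in the table above (namely $\langle r^d\rangle$ for $d\mid n$, plus $\langle r^2,s\rangle$ and $\langle r^2,rs\rangle$), these cocommutative and commutative cases are eliminated. For each surviving datum I confirm non-triviality by verifying that either $F\not\lhd D_n$ or $\Gamma$ is non-abelian (and dually). The outputs are collated into Table \ref{tab:gp-th-data-dn-even}, and pairwise non-isomorphism is confirmed by computing $G(H)$ via Lemma \ref{lema:aux-G(H)}, exactly as in the previous propositions.

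The main obstacle I anticipate is the arithmetic bookkeeping: the classification branches according to whether $n$ is itself twice a square, which divisors $d\mid n$ make $n/d$ of the form $2t^2$, and how $\Aut D_n$ amalgamates the various translates $\langle r^d,r^k s\rangle$ into orbits. The cleanest route seems to be to parametrise the dihedral and Klein four-subgroups, together with the double cosets of $\Aut D_n$ on them, by means of the parameters $(k,l)$ of $\phi_{k,l}$; this simultaneously governs the $\ad D_n$-orbits on $H^2(F,\ku^\times)$ and the equivalence of data dictated by Lemma \ref{prop:algebras gt equivalentes}\ref{item:iso}.
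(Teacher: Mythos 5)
Your overall strategy coincides with the paper's: split on $F\cap\Gamma$ trivial or not, enumerate factorizations of $D_n$ up to the $\Aut D_n\ltimes H^2(D_n,\ku^\times)$-action, kill the cocommutative/commutative data via Lemma \ref{prop:algebras gt equivalentes} \ref{item:cocommutative}, and record the survivors. But your first step contains a genuine error that would corrupt the classification. You assert that the non-trivial class in $H^2(D_{n/d},\ku^\times)$ is non-degenerate whenever $2(n/d)$ is a perfect square, i.e.\ $n/d=2t^2$. Having $\vert S\vert$ a perfect square is necessary but not sufficient for non-degeneracy: one needs $\ku_\alpha S$ to be \emph{simple}, i.e.\ a single matrix algebra $M_{\sqrt{\vert S\vert}}(\ku)$. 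A dihedral group $D_m$ contains the abelian subgroup $\langle r\rangle$ of index $2$, so every irreducible $\alpha$-projective representation has dimension at most $2$; hence $\ku_\alpha D_m$ can be simple only if $2m\leqslant 4$, i.e.\ only for the Klein four-group $D_2$. Consequently the only subgroups of $D_n$ carrying a non-degenerate $2$-cocycle are the Klein four-subgroups, and up to $\Aut D_n$ the only possible non-trivial intersection is $M=\langle r^{n/2},s\rangle$ — which is exactly the reduction the paper makes at the start of its proof. Your extra family "larger dihedrals of square order" would inject spurious group-theoretical data whenever $2t^2\mid n$ for some $t\geqslant 2$ (already for $n=8$ you would admit $F\cap\Gamma=D_8=G$ itself), so the resulting table would be wrong for infinitely many $n$.

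A secondary point: your final claim that pairwise non-isomorphism of the entries is "confirmed by computing $G(H)$" does not hold in this setting. The $G(H)$ values in Table \ref{tab:gp-th-data-dn-even} coincide for several rows, and the paper explicitly leaves open (as a Question) whether \ref{sec:dn}.\ref{dn-even-tres}, \ref{sec:dn}.\ref{dn-even-uno} and \ref{sec:dn}.\ref{dn-even-cuatro} are isomorphic; the proposition only claims that the table exhausts the non-trivial group-theoretical Hopf algebras, not that its rows are pairwise non-isomorphic. Finally, note that the paper needs Lemma \ref{prop:Res-Dn} (non-triviality of the restriction $H^2(D_n,\ku^\times)\to H^2(F,\ku^\times)$ when $n/d$ is even) to verify the non-degeneracy condition on $F\cap\Gamma=M$ in Case 2 and to normalise cocycles in Case 1; your outline does not supply a substitute for this step.
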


\begin{question}
Is it true that \ref{sec:dn}.\ref{dn-even-tres} $\simeq$ \ref{sec:dn}.\ref{dn-even-uno}
$\simeq$ \ref{sec:dn}.\ref{dn-even-cuatro}?
\end{question}

\newcounter{itmdne}
\renewcommand{\theitmdne}{\alph{itmdne}}
\newcommand{\itemdne}[1]{\refstepcounter{itmdne}\ref{sec:dn}.\theitmdne\label{#1}}

\begin{table}[h]
\begin{center}
	\caption{Group-theoretical data for $D_n$, $n$ even}\label{tab:gp-th-data-dn-even}

	\begin{tabular}{|c|p{1.8cm}|c|p{1.8cm}|c|p{2cm}|p{2.4cm}|}
		\hline     $\#$ & $F$ &  $\alpha$ & $\Gamma$ & $\beta$ & Condition & $G(H)$
		\\ \hline
\itemdne{dn-even-tres}  & $\langle r^d,r^ks\rangle$, \newline $d|n$, \newline  $0\leqslant k<d$ \newline $\{d\neq 2$ or \newline $(d\neq n$ and $d\neq \frac{n}{2})\}$ 
    & $1$  & $\langle r^e, s\rangle$,\newline $e|n$, \newline  $\{e\neq 2$ or \newline $(e\neq n$ and $e\neq \frac{n}{2})\}$   &  $1$   & $(d,e)  =2$, \newline $[d,e] = n$,\newline $r^{k} \notin \langle r^{2}\rangle$ & {\small For $d=2$,\newline  $(C_2\times C_2)\rtimes C_2$, if $\frac{n}{2}$ is even,\newline $C_2\times C_2$,  if $\frac{n}{2}$ is odd.\newline For $d\neq 2$,\newline  $C_2\times C_2$,  if $\frac{n}{d}$ is even, \newline  $C_2$, if $\frac{n}{d}$ is odd.}
\\ \hline
\itemdne{dn-even-tres-dual}  & $\langle r^e, s\rangle$,\newline $e|n$,  \newline  $\{e\neq 2$ or \newline $(e\neq n$ and $e\neq \frac{n}{2})\}$
    & $1$  & $\langle r^d,r^ks\rangle$, \newline $d|n$, \newline  $0\leqslant k<d$ \newline $\{d\neq 2$ or \newline $(d\neq n$ and $d\neq \frac{n}{2})\}$    &  $1$   & $(d,e)  =2$, \newline $[d,e] = n$,\newline $r^{k} \notin \langle r^{2}\rangle$ & the same as before for $e$
\\ \hline
\itemdne{dn-even-uno}  & $\langle r^{n/2},s\rangle$    & $\neq 1$  & $D_n$  &  $1$   & $n\neq 4$ & {\small $C_2\times C_2$, if $\frac{n}{2}$ is even \newline $C_2$, if $\frac{n}{2}$ is odd}
\\ \hline
\itemdne{dn-even-uno-dual} & $D_n$  &  1 & $\langle r^{n/2},s\rangle$   & $\neq 1$    & $n\neq 4$ & $C_2\times C_2$
		\\ \hline
	\itemdne{dn-even-cuatro}  & $\langle r^d,r^ks\rangle$, \newline $d|n$, $d\neq 1$ \newline  $0\leqslant k<d$ \newline $\{d\neq 2$ or \newline $(d\neq n$ and $d\neq \frac{n}{2})\}$   & $\neq 1$  &  $\langle r^e, s\rangle$,\newline $e|n$, $e\neq 1$ \newline $\{e\neq 2$ or \newline $(e\neq n$ and $e\neq \frac{n}{2})\}$  &  1  & $(d,e) =1$, \newline $[d,e]  = \frac{n}{2}$,\newline  $r^{k} \in \langle r^{d}\rangle$ & {\small For $d=2$,\newline  $(C_2\times C_2)\rtimes_{\nu} C_2$, if $\frac{n}{2}$ is even,\newline $C_2\times C_2$,  if $\frac{n}{2}$ is odd.\newline For $d\neq 2$,\newline  $C_2\times C_2$,  if $\frac{n}{d}$ is even, \newline  $C_2$, if $\frac{n}{d}$ is odd.}
		\\ \hline	
			\itemdne{dn-even-cuatro-dual}  &  $\langle r^e, s\rangle$,\newline $e|n$, $e\neq 1$\newline $\{e\neq 2$ or \newline $(e\neq n$ and $e\neq \frac{n}{2})\}$  & 1 & $\langle r^d,r^ks\rangle$, \newline $d|n$, $d\neq 1$\newline  $0\leqslant k<d$ \newline $\{d\neq 2$ or \newline $(d\neq n$ and $d\neq \frac{n}{2})\}$    &  $\neq 1$  & $(d,e) =1$, \newline $[d,e]  = \frac{n}{2}$,\newline  $r^{k} \in \langle r^{d}\rangle$ & the same as \ref{sec:dn}.\ref{dn-even-tres} for $e$
		\\ \hline	
	\end{tabular}
	
\end{center}
\end{table}

\pf Let $(F,\alpha, \Gamma, \beta)$ be a group-theoretical datum for $G$. 
Then $F \cap \Gamma$ is either $1$ or else $M = \langle r^{n/2},s\rangle\simeq C_2\times C_2$, up to equivalence of  group-theoretical data.

\noindent\emph{Case 1. } $F \cap \Gamma = 1$, i.e. $(F, \Gamma)$ is an  exact factorization. If $F= \langle r^d\rangle$ and $\Gamma=\langle r^e, r^ks\rangle$, then  $(d,e)=1$, so $de=n$, and  $\gth{D_n}{F}{\Gamma}{1}{1}$ is cocommutative. 
Thus, we may assume that $F=\langle r^d, r^ks\rangle$ and $\Gamma=\langle r^e, s\rangle$, up to equivalence of  group-theoretical data. We claim that this is an exact factorization iff 
\begin{align}\label{eq:dn-even-exfact-2}
(d,e) & =2, & [d,e] & = n, & & r^{k} \notin \langle r^{2}\rangle.
\end{align}
First,  
$F\cap \Gamma  = \big(\langle r^d\rangle \cup  \langle r^d\rangle r^ks\big) \cap \big(\langle r^e\rangle \cup  \langle r^e\rangle s\big)  = \langle r^{[d,e]}\rangle \cup \big(\langle r^d\rangle r^ks \cap \langle r^e\rangle s \big)$.  
So $F\cap \Gamma=1$ iff $n|[d,e]$ and $r^{k} \notin \langle r^{d}\rangle \langle r^{e}\rangle =\langle r^{(d,e)}\rangle$. If $(F, \Gamma)$
is an exact factorization, then   $2n=   |F||\Gamma|=\frac{2n}{d}\cdot\frac{2n}{e}$, i.e. $2n=de$, and $[d,e]=2n$ or $n$. But $[d,e]=2n$ implies $(d,e) =1$ and $r^{k} \in \langle r \rangle$, a contradiction. Thus \eqref{eq:dn-even-exfact-2} holds.
Conversely, if \eqref{eq:dn-even-exfact-2} holds, then $F\cap \Gamma=1$ and 
\begin{align*}
F\Gamma & = \langle r^d\rangle \langle r^e\rangle \cup \langle r^d\rangle   \langle r^e\rangle s \cup \langle r^d\rangle r^ks  \langle r^e\rangle
\cup \langle r^d\rangle r^ks  \langle r^e\rangle s 
\\
&= \langle r^2\rangle \cup \langle r^2\rangle s \cup \langle r^2\rangle r^ks 
\cup \langle r^2\rangle r^{k} = D_n  \text{ since } k \text{ is odd.}
\end{align*}

Finally, $F\lhd D_n$ iff $d=2$; $F$ is abelian iff $d=n$ or $d=\frac{n}{2}$; the same for $\Gamma$. So, we must suppose that $d\neq 2$ or ($d\neq n$ and $d\neq \frac{n}{2}$) and  $e\neq 2$ or ($e\neq n$ and $e\neq \frac{n}{2}$). So, $\gth{D_n}{F}{\Gamma}{\alpha}{\beta}$ is non-trivial. Now, using Lemma \ref{prop:Res-Dn} $\gth{D_n}{F}{\Gamma}{\alpha}{\beta}\simeq \gth{D_n}{F}{\Gamma}{1}{1} $  this gives \ref{sec:dn}.\ref{dn-even-tres}. 
 
\noindent\emph{Case 2. } $F \cap \Gamma = M$.  If $(F, \Gamma)=(\langle r^{n/2},s\rangle, D_n)$, then $F \lhd D_n$ iff $n=4$. By Lemma \ref{prop:algebras gt equivalentes} \ref{item:cocommutative}, $\gth{D_n}{F}{\Gamma}{\alpha}{1}$, $n\neq 4$, is non-trivial, this gives \ref{sec:dn}.\ref{dn-even-uno}. 

Now, we may assume that $F=\langle r^d, r^ks\rangle$ and $\Gamma=\langle r^e, s\rangle$. We claim that this is a factorization such that $F\cap \Gamma=M$ if and only if 
\begin{align}\label{eq:dn-even-exfact-3}
(d,e) & =1, & [d,e] & = \frac{n}{2}, & & r^{k} \in \langle r^{d}\rangle.
\end{align}

We have that $F\cap \Gamma= \langle r^{n/2},s\rangle$ iff $[d,e]\equiv \frac{n}{2}\mod n$ and $r^k\in \langle r^{(\frac{n}{2}, d)}\rangle$. If $(F, \Gamma)$ is a factorization such that $F\cap \Gamma=M$, then $2n=|\frac{|F|\cdot|\Gamma|}{|F\cap \Gamma|}=\frac{1}{4}\frac{2n}{d}\frac{2n}{e}$, i.e. $de=\frac{n}{2}$, and $[d,e]=\frac{n}{2}$, $(d,e)=1$. Thus \eqref{eq:dn-even-exfact-3} holds. Conversely, if \eqref{eq:dn-even-exfact-3} holds, then $F\cap \Gamma=M$ and 
\begin{align*}
F\Gamma & = \langle r^{(d,e)}\rangle \cup \langle r^{(d,e)}\rangle s \cup \langle r^{(d,e)}\rangle r^ks
\cup \langle r^{(d,e)}\rangle r^{k} 
\\
&= \langle r\rangle \cup \langle r\rangle s \cup \langle r\rangle r^ks 
\cup \langle r\rangle r^{k} = D_n.
\end{align*}

If $1\neq \alpha\in H^2( D_{n/d},\ku^\times)\simeq C_2\simeq H^2(D_{n/e} ,\ku^\times) \ni \beta \neq 1$, then $\alpha|_{F\cap \Gamma}\neq 1$,  $\beta|_{F\cap \Gamma}\neq 1$ by Lemma \ref{prop:Res-Dn} below.  Further, $\gth{D_n}{F}{\Gamma}{\alpha}{\beta}$, $d\neq 2$ or ($d\neq n$ and $d\neq \frac{n}{2}$) and  $e\neq 2$ or ($e\neq n$ and $e\neq \frac{n}{2}$), is non-trivial, this gives \ref{sec:dn}.\ref{dn-even-cuatro}.
\epf

Then the Hopf algebra associated to \ref{sec:dn}.\ref{dn-even-uno} is a twisting of $\ku D_n$.

\begin{lema}\label{prop:Res-Dn}
Let $n$ be even. If $F=\langle r^{d},r^ks\rangle$, $d|n$, $0\leqslant k<d$, $\frac{n}{d}$ even, then the restriction map $\operatorname{Res}: H^2(D_n,\mathbb C^\times)\to H^2(F,\mathbb C^\times)$ is non-trivial.
\qed
\end{lema}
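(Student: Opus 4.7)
The plan is to exhibit a commuting pair of elements in $F$ on which the restricted cocycle $\operatorname{Res}(f_{\chi})$ has non-trivial commutator, thereby proving non-triviality in $H^{2}(F,\mathbb{C}^{\times})$.

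First I would recall that for any 2-cocycle $\alpha\in Z^{2}(G,\mathbb{C}^{\times})$ and any pair of commuting elements $g,h\in G$, the quantity $\alpha(g,h)\alpha(h,g)^{-1}$ depends only on the cohomology class of $\alpha$ (a standard check: coboundaries $\delta\eta(g,h)\delta\eta(h,g)^{-1}=\eta(gh)^{-1}\eta(hg)=1$ when $gh=hg$). Hence to show $\operatorname{Res}(f_{\chi})\neq 1$ it is enough to produce commuting $g,h\in F$ with $f_{\chi}(g,h)\neq f_{\chi}(h,g)$.

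Second, I would identify a candidate pair. Since $d\mid n$ and $n/d$ is even, we have $n/2 = d\cdot(n/(2d))\in d\mathbb{Z}$, so $r^{n/2}\in\langle r^{d}\rangle\subseteq F$. Also $r^{k}s\in F$ by definition. Because $n$ is even, $r^{n/2}$ is central in $D_{n}$, so in particular $r^{n/2}$ and $r^{k}s$ commute. Using the explicit formula $f_{\chi}(r^{i}s^{j},r^{\ell}s^{m})=\chi(r^{\ell})^{j}$ together with $\chi(r^{n/2})=-1$ and the convention that $r^{n/2}=r^{n/2}s^{0}$, one computes
\begin{align*}
f_{\chi}(r^{k}s,\,r^{n/2}) &= \chi(r^{n/2})^{1}=-1, &
f_{\chi}(r^{n/2},\,r^{k}s) &= \chi(r^{k})^{0}=1.
\end{align*}
Thus the commutator form of $\operatorname{Res}(f_{\chi})$ on the commuting pair $(r^{k}s,r^{n/2})\in F\times F$ equals $-1\neq 1$, which forces $\operatorname{Res}(f_{\chi})\neq 1$ in $H^{2}(F,\mathbb{C}^{\times})$.

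There is no serious obstacle here: the only point requiring care is verifying $r^{n/2}\in F$ (which is where the hypothesis that $n/d$ is even enters) and checking that the argument via the commutator form is invariant under coboundaries. The computation itself is immediate from the explicit representative $f_{\chi}$ recalled in the table at the beginning of \S\ref{sec:dn}.
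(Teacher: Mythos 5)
Your proof is correct. The paper states this lemma without proof (it is asserted with an immediate \qed), and your argument via the cohomology-invariant commutator pairing $\alpha(g,h)\alpha(h,g)^{-1}$ on the commuting pair $(r^{k}s,\,r^{n/2})$ — with $r^{n/2}\in\langle r^{d}\rangle\subseteq F$ precisely because $n/d$ is even — is the standard and presumably intended justification; the computation $f_{\chi}(r^{k}s,r^{n/2})=-1\neq 1=f_{\chi}(r^{n/2},r^{k}s)$ settles it.
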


\begin{theorem}\label{prop:dn-even-pointed}	
	Let be $n=4t$, $t\geqslant 3$. The Hopf algebras from Table \ref{tab:gp-th-data-dn-even} admit families of \fd{} Yetter-Drinfeld modules $V$ with $\toba(V) = \Lambda (V)$. 
Hence we get new  \fd{} Hopf algebras with the dual Chevalley property. 
\qed
\end{theorem}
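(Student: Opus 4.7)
The plan is a direct application of the Morita-equivalence transfer principle sketched in \S\ref{subsec:intro3}, combined with the known classification of finite-dimensional pointed Hopf algebras over $\ku D_n$ from \cite{FG}. By Proposition \ref{prop:dn-even}, every Hopf algebra $H$ appearing in Table \ref{tab:gp-th-data-dn-even} satisfies $H \mor \ku D_n$. Hence there is a braided tensor equivalence $\Fc: \ydG \to \ydhh$ (with $G=D_n$) that carries Nichols algebras to Nichols algebras, i.e. $\Fc(\toba(V)) \simeq \toba(\Fc(V))$ as algebras and coalgebras.

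First, I would extract from \cite{FG} the families of finite-dimensional Yetter-Drinfeld modules $V \in \ydG$ realizing braided vector spaces of rack type (conjugacy classes of reflections with cocycle $-1$) whose Nichols algebra is the exterior algebra, $\toba(V) = \Lambda(V)$. The hypothesis $n = 4t$ with $t \geqslant 3$ is precisely the range in which these families, together with their liftings, are classified. In particular $\dim \toba(V) = 2^{\dim V} < \infty$.

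Next, for each such $V$ I would set $V' := \Fc(V) \in \ydhh$. Then $\toba(V') \simeq \Fc(\toba(V)) \simeq \Lambda(V')$, so $V'$ is a finite-dimensional Yetter-Drinfeld module over $H$ with $\toba(V') = \Lambda(V')$. The bosonization $\Ac(V') := \toba(V') \# H$ is then a finite-dimensional Hopf algebra whose coradical $\Ac(V')_0 \simeq H$ is semisimple, so $\Ac(V')$ has the dual Chevalley property. This yields the promised families for every entry of Table \ref{tab:gp-th-data-dn-even}.

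The main obstacle lies in justifying the word \emph{new}. For the non-trivial entries \ref{sec:dn}.\ref{dn-even-tres}--\ref{sec:dn}.\ref{dn-even-cuatro-dual}, one checks from the computed $G(H)$ that neither $H$ nor $H^*$ is a group algebra, so $\Ac(V')$ is neither pointed nor copointed; together with Lemma \ref{prop:algebras gt equivalentes}\ref{item:cocommutative} this ensures $\Ac(V')$ does not appear in the earlier literature on dihedral pointed Hopf algebras. What this argument does \emph{not} deliver, as noted in Remark (iii) of \S\ref{subsec:intro5}, is an explicit presentation: since $\Fc$ is not constructive, the concrete Yetter-Drinfeld structure of $V'$ and the defining relations of $\toba(V') = \Lambda(V')$ inside $\Ac(V')$ remain implicit, which is why the theorem only asserts existence and the classification of liftings is left open.
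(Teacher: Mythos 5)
Your argument is correct and follows essentially the same route the paper intends: the theorem carries a \qed{} precisely because it is the Morita-transfer principle of \S\ref{subsec:intro3} applied to Proposition \ref{prop:dn-even} and the classification in \cite{FG}, followed by bosonization. One small factual slip worth noting: the relevant Yetter--Drinfeld modules over $\ku D_n$ from \cite{FG} are of \emph{diagonal} type, supported on the central element $r^{n/2}$ (cf.\ the remark in \S\ref{subsec:intro5} that Table \ref{tab:gp-th-data-dn-even} is the one exception to the rack-type setting), not of rack type on a conjugacy class of reflections --- but this does not affect your argument, which only uses that $\toba(V)=\Lambda(V)$ is finite-dimensional.
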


The liftings of $\toba(V) \# \ku G$, where $V$ is as above, are classified in \cite[Theorem B]{FG}. Indeed, the Hopf algebras
\begin{itemize}
\item $\toba(M_I)\# \ku D_n$ with $I=\{(i, k)\}\in \mathcal{I}$, $k\neq n$,

\item $\toba(M_L)\# \ku D_n$ with $L \in \mathcal{L}$,

\item $A_I(\lambda, \gamma)$ with $I\in \mathcal{I}$, $|I|\geqslant 1$ or $I=\{(i,n)\}$ and $\gamma\equiv 0$,

\item $B_{I, L}(\lambda, \gamma, \theta, \mu)$ with $(I, L)\in \mathcal{K}$, $|I|>0$ and $|L|>0$,

\end{itemize}
are liftings of $\toba(V) \# \ku D_n$ and any lifting is isomorphic to one of these algebras. For the definitions see \cite[Def. 2.6, 2.9, 2.14, 3.9, 3.11]{FG}. For the isomorphism classes of these families of Hopf algebras see \cite[Lemma 3.16, 3.17]{FG}

\begin{obs} Let be $n=4t$, $t\geqslant 3$ and $H$ be the Hopf algebra corresponding  to  \ref{sec:dn}.\ref{dn-even-uno}. The classification of all liftings of $\toba(V) \# H$ follows from \cite[Th. B]{FG}. The idea is the same as in Remarks \ref{obs-lifting-c3c6}, \ref{obs-lifting-c5c20}, \ref{obs-lifting-s4} and \ref{obs-lifting-a4c2}.
\end{obs}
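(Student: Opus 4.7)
The plan is to reduce the statement to the known classification of finite-dimensional Nichols algebras over $D_n$ from \cite{FG}, transported via the Morita equivalence established in Proposition \ref{prop:dn-even}. The overall strategy is the same as for Theorems \ref{prop:c3c6-pointed}, \ref{prop:c5c20-pointed}, \ref{prop:S4-pointed}, etc.

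First, I would invoke \cite{FG}: for $n = 4t \geq 12$, there are two families of finite-dimensional Yetter-Drinfeld modules $M_I$ (with $I \in \mathcal{I}$) and $M_L$ (with $L \in \mathcal{L}$) over $\ku D_n$ whose Nichols algebras are exterior algebras, i.e.\ $\toba(M_I) = \Lambda(M_I)$ and $\toba(M_L) = \Lambda(M_L)$. In particular $\dim \toba(M_I), \dim \toba(M_L) < \infty$.

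Second, I would apply the braided equivalence transfer. By Proposition \ref{prop:dn-even}, each non-trivial $H$ listed in Table \ref{tab:gp-th-data-dn-even} satisfies $H \mor \ku D_n$. As recalled in \S\ref{subsec:intro3}, this Morita equivalence yields a braided tensor equivalence
\begin{align*}
\Fc : {}^{\ku D_n}_{\ku D_n}\mathcal{YD} \longrightarrow \ydhh
\end{align*}
that preserves Nichols algebras, $\Fc(\toba(V)) \simeq \toba(\Fc(V))$. Setting $V' := \Fc(M_I)$ (resp.\ $\Fc(M_L)$) produces a family of Yetter-Drinfeld modules in $\ydhh$ with $\toba(V') \simeq \Lambda(M_I)$ (resp.\ $\Lambda(M_L)$) as graded algebras and coalgebras; in particular $\dim \toba(V') = 2^{\dim V'} < \infty$, proving the first assertion of the theorem.

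Third, I would form the bosonization $\Ac(V') = \toba(V') \# H$. This is a finite-dimensional Hopf algebra whose coradical is isomorphic to $H$, which is semisimple, so $\Ac(V')$ has the dual Chevalley property. To see that these are \emph{new} examples, I would invoke Lemma \ref{prop:algebras gt equivalentes}\ref{item:cocommutative} together with Remark (ii) following Table \ref{tab:app2}: each non-trivial $H$ in Table \ref{tab:gp-th-data-dn-even} is not isomorphic to a group algebra $\ku G'$ for any finite group $G'$, so the coradicals of the $\Ac(V')$ do not fit into the pointed case already treated in \cite{FG}. The main obstacle is conceptual rather than technical: the equivalence $\Fc$ is abstract and not explicit (cf.\ Remark (iii) after Table \ref{tab:app2}), so while we can assert the existence of $V'$ and the isomorphism type of $\toba(V')$, we do not obtain a concrete description of $V'$ as a Yetter-Drinfeld module over $H$ — this is why the statement is phrased as a mere existence result and why the liftings over the $H$ of Table \ref{tab:gp-th-data-dn-even} are not addressed directly.
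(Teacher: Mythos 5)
Your proposal does not prove the statement at hand: it essentially re-proves Theorem \ref{prop:dn-even-pointed} (existence of \fd{} Nichols algebras over the Hopf algebras of Table \ref{tab:gp-th-data-dn-even}, and the dual Chevalley property of their bosonizations) and then explicitly declines to address the liftings, writing that ``the liftings over the $H$ of Table \ref{tab:gp-th-data-dn-even} are not addressed directly.'' But the remark you were asked to justify is precisely a claim about liftings: that for the \emph{particular} Hopf algebra $H$ of \ref{sec:dn}.\ref{dn-even-uno}, the classification of all liftings of $\toba(V)\# H$ follows from \cite[Th. B]{FG}. Your closing paragraph therefore contradicts the statement rather than establishing it.

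The missing idea is the one the paper records immediately after Proposition \ref{prop:dn-even}: the datum \ref{sec:dn}.\ref{dn-even-uno} has the form $\gth{D_n}{\langle r^{n/2},s\rangle}{D_n}{\alpha}{1}$, so by Lemma \ref{prop:algebras gt equivalentes} \ref{item:twist} this $H$ is a \emph{twisting} of $\ku D_n$, i.e. $H\simeq (\ku D_n)^J$ for some twist $J\in \ku D_n\otimes \ku D_n$. For a twist (unlike for a general Morita equivalence, cf.\ Remark (iii) after Table \ref{tab:app2}) the transfer of Problem \eqref{item:lefting-ii} is explicit, as recalled in \S\ref{subsec:intro4}: if $A$ is a lifting of $\toba(V)\#\ku D_n$, then $J$ is a twist for $A$ and $A^J$ is a lifting of $\toba(V)\# H$; every lifting of $\toba(V)\# H$ arises this way; and the isomorphism classes are the same as in the untwisted case. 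Since \cite[Th. B]{FG} lists all liftings of $\toba(V)\#\ku D_n$ for $n=4t\geqslant 12$ (the algebras $\toba(M_I)\#\ku D_n$, $\toba(M_L)\#\ku D_n$, $A_I(\lambda,\gamma)$, $B_{I,L}(\lambda,\gamma,\theta,\mu)$, with isomorphisms in \cite[Lemmas 3.16, 3.17]{FG}), applying $(-)^J$ yields the classification over $H$. This is exactly ``the same idea as in Remarks \ref{obs-lifting-c3c6}, \ref{obs-lifting-c5c20}, \ref{obs-lifting-s4} and \ref{obs-lifting-a4c2}'', each of which is formulated for a twist $J$ with $H\simeq (\ku G)^J$. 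Your appeal to the abstract braided equivalence $\Fc:{}^{\ku D_n}_{\ku D_n}\mathcal{YD}\to\ydhh$ is the right tool for the \emph{other} entries of Table \ref{tab:gp-th-data-dn-even}, where indeed nothing is claimed about liftings; but for \ref{sec:dn}.\ref{dn-even-uno} the twist structure gives a concrete answer, and that is the entire content of the remark.
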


\end{document}